\newcommand{\CC}{{\mathbb C}}
\newcommand{\QQ}{{\mathbb Q}}
\newcommand{\ZZ}{{\mathbb Z}}
\newcommand{\PP}{\mathbb P}
\newcommand{\TT}{\mathbb T}
\newcommand{\A}{{\mathcal A}}
\newcommand{\X}{\mathcal X}
\newcommand{\mc}[1]{\mathcal{#1}}
\newcommand{\mf}[1]{\mathfrak{#1}}
\newtheorem{prop}{Proposition}[section] 
\newtheorem{cor}[prop]{Corollary} 
\newtheorem{conj}[prop]{Conjecture}
\newtheorem{thm}[prop]{Theorem}
\newenvironment{manualcor}[1]{%
  \manualcorinner
}{\endmanualcorinner}
\theoremstyle{definition}
\newtheorem{defn}[prop]{Definition}
\newtheorem{rmk}[prop]{Remark}
\newtheorem{ex}[prop]{Example}
\numberwithin{equation}{section} 
\title{Combinatorics of $\mathcal{X}$-variables in Finite Type Cluster Algebras}
\author{Melissa Sherman-Bennett}
\begin{document}

\maketitle

\begin{abstract}
We compute the number of $\mathcal{X}$-variables (also called coefficients) of a cluster algebra of finite type when the underlying semifield is the universal semifield. For classical types, these numbers arise from a bijection between coefficients and quadrilaterals (with a choice of diagonal) appearing in triangulations of certain marked surfaces. We conjecture that similar results hold for cluster algebras from arbitrary marked surfaces, and obtain corollaries regarding the structure of finite type cluster algebras of geometric type.
\end{abstract}

\section{Introduction}

Cluster algebras were introduced by Fomin and Zelevinsky in the early 2000s \cite{FZ1}, with the intent of establishing a general algebraic structure for studying dual canonical bases of semisimple groups and total positivity. A cluster algebra, or equivalently its seed pattern, is determined by an initial set of cluster variables (which we call $\A$-variables) and coefficients (which we call $\X$-variables), along with some additional data. As the terminology suggests, in the original definitions, $\A$-variables were the main focus. This is reflected in much of the research on cluster algebras to date, which focuses largely on $\A$-variables and their dynamics. However, $\X$-variables are important in total positivity, and $\X$-variables over the universal semifield have recently appeared in the context of scattering amplitudes in $\mc{N}=4$ Super Yang-Mills theory \cite{physics}. Moreover, in the setting of cluster varieties, introduced by Fock and Goncharov \cite{FG}, the $\A$- and $\X$-varieties (associated with $\A$- and $\X$-variables, respectively) are on equal footing. Fock and Goncharov conjectured that a duality holds between the two varieties \cite[Conjecture 4.3]{FG}, which was later shown to be true under fairly general assumptions \cite{GHKK}. This duality suggests that studying $\X$-variables could be fruitful both in its own right and in furthering our understanding of cluster algebras. 

A study along these lines was undertaken by Speyer and Thomas in the case of acyclic cluster algebras with principal coefficients (that is, the semifield $\PP$ is the tropical semifield $\PP=$Trop$(t_1, \dots, t_k)$ and the $\X$-variables of the initial cluster are $(t_1, \dots, t_k)$) \cite{ST} . Using methods from quiver representation theory, they found that the $\X$-variables are in bijection with roots of an associated root system and give a combinatorial description of which roots can appear in the same $\X$-cluster. Seven found that in this context, mutation of $\X$-seeds roughly corresponds to reflection across hyperplanes orthogonal to roots \cite{Sev}. However, the above results do not address $\X$-variables over the universal semifield, and the numerology in the case of principal coefficients is quite different from what we obtain here. Our proofs are also completely combinatorial.

We investigate the combinatorics of $\X$-variables for seed patterns of finite type, particularly in the case when the underlying semifield is the universal semifield. The combinatorics of $\A$-variables for finite type seed patterns is particularly rich, with connections to finite root systems \cite{FZ2} and triangulations of certain marked surfaces \cite[Chapter 5]{book}. Parker \cite{parker} conjectures, and Scherlis \cite{scherlis} gives a partial proof, that in type $A$, $\X$-variables over the universal semifield are in bijection with the quadrilaterals of these triangulations. We generalize and prove this statement for all classical types ($ABCD$).

\begin{thm} \label{quadbijection} Let $\mc{S}$ be an $\X$-seed pattern of classical type $Z_n$ over the universal semifield such that one $\X$-cluster consists of algebraically independent elements. Let $\textbf{P}$ be the marked polygon associated to type $Z_n$. Then the $\X$-variables of $\mc{S}$ are in bijection with the quadrilaterals (with a choice of diagonal) appearing in triangulations of $\textbf{P}$.
\end{thm}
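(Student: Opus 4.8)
The plan is to reduce the problem to a single explicit realization of the $\mathcal{X}$-seed pattern and then to identify its $\mathcal{X}$-variables with monomials attached to quadrilaterals of $\textbf{P}$. The first step is a rigidity observation. Since $\mathcal{X}$-mutation is a birational involution, algebraic independence of one $\mathcal{X}$-cluster propagates to every $\mathcal{X}$-cluster of the pattern; and if two $\mathcal{X}$-seed patterns of type $Z_n$ over the universal semifield both enjoy this property, then, choosing in each the seed with a fixed exchange matrix $B$ from the type-$Z_n$ mutation class, the algebraically independent $\mathcal{X}$-clusters there freely generate isomorphic subsemifields, so $y_i \mapsto y_i'$ extends to an isomorphism of the two seed patterns. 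Hence it suffices to count $\mathcal{X}$-variables in one convenient model.

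For that I would use the lambda-length (decorated Teichm\"uller) realization of the cluster algebra of $\textbf{P}$: attach an algebraically independent variable $\lambda_a$ to each arc and boundary segment $a$, let the $\mathcal{A}$-variables be the $\lambda_a$ (with their Ptolemy relations), and let the $\mathcal{X}$-variable at an arc $e$ of a triangulation $T$ be the shear coordinate
\[
y_e^T \;=\; \frac{\lambda_{\sigma_1}\lambda_{\sigma_3}}{\lambda_{\sigma_2}\lambda_{\sigma_4}},
\]
a ratio of products of lambda-lengths of the four sides $\sigma_1,\sigma_2,\sigma_3,\sigma_4$ of the quadrilateral of $T$ across $e$, determined by this quadrilateral together with the choice of $e$ as its diagonal (for types $B$, $C$, $D$ one uses the variant of this formula appropriate to an order-$2$ orbifold point, a self-folded triangle, or a tagged arc). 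The Ptolemy relation $\lambda_e\lambda_{e'} = \lambda_{\sigma_1}\lambda_{\sigma_3}+\lambda_{\sigma_2}\lambda_{\sigma_4}$ gives the identity $1+y_e^T = \lambda_e\lambda_{e'}/(\lambda_{\sigma_2}\lambda_{\sigma_4})$, from which one checks that the $y_e^T$ transform under flips exactly by the $\mathcal{X}$-mutation rule; and since shear coordinates form a coordinate system on Teichm\"uller space, the initial $\mathcal{X}$-cluster $(y_e^{T_0})_{e\in T_0}$ is algebraically independent. So this model realizes the theorem's hypothesis, and by the first paragraph it computes the desired count.

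It then remains to enumerate the elements $y_e^T$. By the displayed formula, $y_e^T$ depends only on the pair (quadrilateral $Q$ of $e$ in $T$; the diagonal $e$ of $Q$), and conversely every such pair arises, since any four vertices of $\textbf{P}$ and a choice of diagonal complete to a triangulation; this gives a surjection from quadrilaterals-with-diagonal onto $\mathcal{X}$-variables. For injectivity I would use that lambda-lengths are cluster variables, hence pairwise non-associate irreducible elements of the (UFD) Laurent polynomial ring in the coordinates; writing an equality $y_e^T=y_{e'}^{T'}$ with $\sigma_i$ and $\tau_i$ the sides of the two quadrilaterals and clearing denominators, unique factorization forces $\{\sigma_1,\sigma_3\}=\{\tau_1,\tau_3\}$ and $\{\sigma_2,\sigma_4\}=\{\tau_2,\tau_4\}$. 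Since the four sides determine the quadrilateral and the numerator pair of opposite sides determines which of the two diagonals is drawn, the pair $(Q,e)$ is recovered. This establishes the bijection; in type $A$ it specializes to the count $2\binom{n+3}{4}$.

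The step I expect to be the main obstacle is carrying this out uniformly for types $B$, $C$, and $D$ --- a polygon with an order-$2$ orbifold point, and a once-punctured polygon with tagged arcs. There the notion of ``quadrilateral appearing in a triangulation'' and the local $\mathcal{X}$-variable formula both need care: a flip inside a self-folded triangle, or at the puncture, or one involving the orbifold point, is not an ordinary quadrilateral flip, and the injectivity step must be re-examined for conjugate or differently-tagged arcs whose lambda-lengths can coincide. An alternative is to derive $B$, $C$, $D$ (and $D$ itself) from type $A$ by folding, but folding of $\mathcal{X}$-seed patterns over the universal semifield interacts badly with algebraic independence, so the direct surface-by-surface analysis is likely the cleaner route.
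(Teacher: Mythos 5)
Your overall strategy --- reduce to one explicit model in which the $\X$-variable at an arc is a cross-ratio of coordinates attached to the four sides of its quadrilateral, then separate these rational functions --- is the same as the paper's: the paper's type-$A$ model $\mc{R}(A_n)$ is exactly the lambda-length/Pl\"{u}cker model you describe, with $\hat{x}_{q,\gamma}=P_{il}P_{jk}/(P_{ij}P_{kl})$, and your unique-factorization argument is a legitimate substitute for the paper's method of separating these functions by evaluating at explicit $2\times(n+3)$ matrices. For type $A$ your proof is essentially complete, granted the standard fact that the $2\times 2$ minors are pairwise non-associate irreducibles; note you must run the UFD argument in the polynomial ring of matrix entries (or otherwise justify irreducibility), since irreducibility of a cluster variable in the Laurent ring of an initial cluster is not automatic.

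The genuine gap is that the theorem covers all classical types, and your treatment of $B$, $C$, $D$ consists of ``one uses the variant of this formula appropriate to\dots'' followed by the admission that the injectivity step ``must be re-examined.'' That re-examination is not a routine verification; it is where the bulk of the paper's proof (Section 5) lives, and the difficulty you flag is real enough to sink the naive formula. Concretely, in type $D$ take the two quadrilaterals on a pair of vertices $i,j$ of $\textbf{P}_n^\bullet$ whose chosen diagonals are the plain and the notched radius at $j$: their sides carry identical lambda-lengths, so the naive cross-ratio assigns both the same rational function $P_{ij}/P_{i\overline{j}}$ and injectivity fails. The paper repairs this by adjoining two extra parameters $\lambda,\overline{\lambda}$ (eigenvalue data at the puncture, in the spirit of the tagged-arc lambda-length formalism), after which the two $\X$-variables become $\overline{\lambda}P_{ij}/P_{i\overline{j}}$ and $\lambda P_{ij}/P_{i\overline{j}}$. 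Likewise, types $B$ and $C$ require modified Pl\"{u}cker coordinates such as $P_{i\overline{j}}=P_{i,n+2}P_{j,n+2}-P_{ij}$, whose irreducibility and pairwise non-associateness (including non-associateness to the unmodified $P_{ij}$) is an additional claim your UFD step would need. Until these cases are carried out, the proposal establishes the theorem only in type $A$ --- the case already conjectured by Parker and partially proved by Scherlis.
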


We also obtain the following corollary, which follows from Theorem \ref{quadbijection} in classical types and was verified by computer in exceptional types. 

\begin{cor}\label{expairbijection} Let $\mc{R}$ be a finite type $\A$-seed pattern. There is a bijection between ordered pairs of exchangeable $\A$-variables in $\mc{R}$ and $\X(\mc{S}_{sf})$.
\end{cor}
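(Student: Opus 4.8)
I would deduce Corollary~\ref{expairbijection} from Theorem~\ref{quadbijection} in the classical types and verify it by computer otherwise. The first step is to note that both the number of ordered pairs of exchangeable $\A$-variables in $\mc{R}$ and the cardinality of $\X(\mc{S}_{sf})$ depend only on the Cartan--Killing type of $\mc{R}$, and that both are additive over the irreducible factors of that type; so it suffices to produce, for each irreducible finite type, a bijection between the two sets. Note also that Theorem~\ref{quadbijection} applies to $\mc{S}_{sf}$, since an initial $\X$-cluster over the universal semifield consists of algebraically independent elements.

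For a classical type $Z_n$ I would run the bijection through the marked polygon $\textbf{P}$. Using the realization of finite type cluster algebras by (tagged) triangulations of marked surfaces (\cite[Chapter~5]{book}), identify the $\A$-variables of $\mc{R}$ with the (tagged) arcs of $\textbf{P}$, the seeds with the (tagged) triangulations, and mutation with the flip of an arc. The crux is then the claim that two $\A$-variables $a$ and $b$ are exchangeable if and only if they are the two diagonals of a quadrilateral whose four sides occur in some triangulation of $\textbf{P}$, and moreover that this quadrilateral is uniquely determined by the unordered pair $\{a,b\}$: if $a,b$ are exchangeable, a flip from a seed containing $a$ to one containing $b$ sweeps out exactly such a quadrilateral; conversely, flipping the diagonal $a$ of such a quadrilateral returns $b$; and the uniqueness is the observation that the four sides are forced once one demands that flipping $a$ yield $b$. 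Granting this, sending a quadrilateral together with a chosen diagonal $d$ to the ordered pair $(d,d')$, with $d'$ the other diagonal, is a bijection from quadrilaterals-with-a-choice-of-diagonal appearing in triangulations of $\textbf{P}$ onto ordered pairs of exchangeable $\A$-variables; composing with Theorem~\ref{quadbijection} settles the classical case.

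The step I expect to be the main obstacle is exactly this ``exchangeable $\iff$ common quadrilateral, uniquely'' equivalence once one leaves the case of a disk: punctures, self-folded triangles and tagged arcs in type $D$, and the orbifold point (or folding symmetry) in types $B$ and $C$ all force a case analysis of the local pictures around a flippable arc. I expect it to go through via the standard theory of flips of tagged triangulations, and the payoff is that no new combinatorial model is needed: the notion of ``quadrilateral appearing in a triangulation of $\textbf{P}$'' required here is precisely the one already set up and shown to have the correct cardinality in Theorem~\ref{quadbijection}.

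For the exceptional types $E_6,E_7,E_8,F_4,G_2$ there is no surface model, and here I would simply check the equality of cardinalities by computer: enumerate the finitely many seeds of $\mc{R}$ to obtain the list of ordered exchangeable pairs, enumerate $\X(\mc{S}_{sf})$ by mutating $\X$-seeds over the universal semifield, and confirm that the two counts agree. Since a bijection between two finite sets exists precisely when they are equinumerous, this completes the proof.
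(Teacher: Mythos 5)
Your proposal is correct and follows essentially the same route as the paper: in classical types, compose the bijection of Theorem~\ref{quadbijection} with the (tautological) bijection sending a quadrilateral-with-chosen-diagonal to the ordered pair of exchangeable $\A$-variables labelling its two diagonals, and in exceptional types compare cardinalities by computer. The paper states the ``exchangeable $\iff$ diagonals of a unique common quadrilateral'' equivalence without the case analysis you flag, but your identification of that as the only point needing care matches the structure of the published argument.
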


As another corollary, we compute the number of $\X$-variables in a classical type $\X$-seed pattern $\mc{S}_{sf}$ over the universal semifield. We also compute the number $|\X(\mc{S}_{sf})|$ of $\X$-variables over the universal semifield for exceptional types using a computer algebra system. Note that all other $\X$-seed patterns with the same exchange matrices have at most as many $\X$-variables as $\mc{S}_{sf}$. The numbers $|\X(\mc{S}_{sf})|$ are listed in the second row of the following table (the numbers for $A_n$ for $n \leq6$, $D_4$, and $E_6$ were also computed in \cite{parker}). For comparison, the third row gives the number of $\X$-variables in a finite type $\X$-seed pattern $\mc{S}_{pc}$ with principal coefficients, a corollary of the results in \cite{ST}.

\begin{center}
\setlength\tabcolsep{3pt}
\begin{tabular}{| c | c | c| c | c | c| c| c| c| }
 \hline
   Type & $A_n$ & $B_n, C_n$ & $D_n$ & $E_6$ & $E_7$ & $E_8$ & $F_4$ & $G_2$ \\ \hline
$|\X(\mc{S}_{sf})|$ & $2\binom{n+3}{4}$ & $\frac{1}{3}n(n+1)(n^2+2)$ & $\frac{1}{3}n(n-1)(n^2+4n-6)$ & 770 & 2100 & 6240 & 196 & 16 \\ \hline
$|\X(\mc{S}_{pc})|$ & $n(n+1)$ & $2n^2$ & $2n(n-1)$ & $72$ & $126$ & $240$ & $48$ & $12$\\
\hline
\end{tabular}
\end{center}


\section{Seed Patterns}

We largely follow the conventions of \cite{FZ4}. 

\subsection{Seeds and Mutation}

We begin by fixing a \emph{semifield} $(\PP, \cdot, \oplus)$, a multiplicative abelian group $(\PP, \cdot)$ equipped with an (auxiliary) addition $\oplus$, a binary operation which is associative, commutative, and distributive with respect to multiplication.  

\begin{ex} Let $t_1, \dots, t_k$ be algebraically independent over $\QQ$. The \emph{universal semifield} $\QQ_{sf}(t_1, \dots, t_k)$ is the set of all rational functions in $t_1, \dots, t_k$ that can be written as subtraction-free expressions in $t_1, \dots, t_k$. This is a semifield with respect to the usual multiplication and addition of rational expressions. Note that any (subtraction-free) identity in $\QQ_{sf}(t_1, \dots, t_k)$ holds in an arbitrary semifield for any elements $u_1,  \dots, u_k$ \cite[Lemma 2.1.6]{BFZ96}.
\end{ex}

\begin{ex}The \emph{tropical semifield} Trop$(t_1, \dots, t_k)$ is the free multiplicative group generated by $t_1, \dots, t_k$, with auxiliary addition defined by 

\[\prod_{i=1}^k t_i^{a_i} \oplus \prod_{i=1}^k t_i^{b_i}= \prod_{i=1}^k t_i^{\min(a_i, b_i)}.
\]
\end{ex} 

Let $\QQ\PP$ denote the field of fractions of the group ring $\ZZ\PP$. We fix an \emph{ambient field} $\mc{F}$, isomorphic to $\QQ\PP(t_1, \dots, t_n)$.

\begin{defn} A \emph{labeled $\X$-seed} in $\PP$ is a pair $(\textbf{x}, B)$ where $\textbf{x}=(x_1, \dots, x_n)$ is a tuple of elements in $\PP$ and $B=(b_{ij})$ is a \emph{skew-symmetrizable} $n \times n$ integer matrix, that is there exists a diagonal integer matrix $D$ with positive diagonal entries such that $DB$ is skew-symmetric. 

A \emph{labeled $\A$-seed} in $\mc{F}$ is a triple $(\textbf{a}, \textbf{x}, B)$ where $(\textbf{x}, B)$ is a labeled $\X$-seed in $\PP$ and $\textbf{a}=(a_1, \dots, a_n)$ is a tuple of elements of $\mc{F}$ which are algebraically independent over $\QQ\PP$ and generate $\mc{F}$. We call $\textbf{x}$ the (labeled) $\X$\emph{-cluster}, $\textbf{a}$ the (labeled) $\A$\emph{-cluster}, and $B$ the \emph{exchange matrix} of the labeled seed $(\textbf{a}, \textbf{x}, B)$.
\end{defn}

The elements of an $\X$- (respectively $\A$-)cluster are called $\X$- (respectively $\A$-)variables. In the language of Fomin and Zelevinsky, the $\X$-cluster is the coefficient tuple, the $\A$-cluster is the cluster, and the $\X$- and $\A$-variables are coefficients and cluster variables, respectively. The notation here is chosen to parallel Fock and Goncharov's $\A$- and $\X$- cluster varieties. Note that an $\X$-seed consists only of an exchange matrix and an $\X$-cluster, but an $\A$-seed consists of an exchange matrix, an $\A$-cluster \emph{and} an $\X$-cluster. For simplicity, we use ``cluster", ``seed", etc.  without a prefix when a statement holds regardless of prefix. 

One moves from labeled seed to labeled seed by a process called \emph{mutation}.




\begin{defn}[\protect{\cite[Definition 2.4]{FZ4}}] Let $(\textbf{a},\textbf{x}, B)$ be a labeled $\A$-seed in $\mc{F}$. The \emph{$\A$-seed mutation} in direction $k$, denoted $\mu_k$, takes $(\textbf{a},\textbf{x}, B)$ to the labeled $\A$-seed $(\textbf{a}',\textbf{x}', B')$ where 

\begin{itemize}
\item The entries $b_{ij}'$ of $B'$ are given by
\begin{equation} \label{matrixmut}
	b'_{ij}=
		\begin{cases}
		-b_{ij} & \text{if } i=k \text{ or } j=k\\
		b_{ij}+b_{ik}|b_{kj}| & \text{if } b_{ik}b_{kj}>0\\
		b_{ij} & \text{else}.
		\end{cases} 
\end{equation}
\item
The $\A$-cluster $\textbf{a}'=(a_1', \dots, a_n')$ is obtained from $\textbf{a}$ by replacing the $k^\text{th}$ entry $a_k$ with an element $a_k' \in \mc{F}$ satisfying the \emph{exchange relation}

\begin{equation}\label{exchangerel}
a_k' a_k=\frac{x_k\prod\limits_{b_{ik}>0}a_i^{b_{ik}}+\prod\limits_{b_{ik}<0}a_i^{-b_{ik}}}{x_k \oplus 1}.
\end{equation}
\item The $\X$-cluster $\textbf{x}'=(x_1', \dots, x_n')$ is given by
\begin{equation}\label{xmut}
	x'_j=
		\begin{cases}
		x_j^{-1} & \text{if } j=k\\
		x_j (x_k^{sgn(-b_{kj})} \oplus 1)^{-b_{kj}} & \text{else}
		\end{cases}
\end{equation}
where $sgn(x)=0$ for $x=0$ and $sgn(x)=|x|/x$ otherwise.
\end{itemize}

Similarly, the \emph{$\X$-seed mutation} $\mu_k$ in direction $k$ takes the labeled $\X$-seed $(\textbf{x}, B)$ in $\PP$ to the $\X$-seed $(\textbf{x}', B')$ in $\PP$ and \emph{matrix mutation} takes $B$ to $B'$. Two skew-symmetrizable integer matrices are \emph{mutation equivalent} if some sequence of matrix mutations takes one to the other.

\end{defn}
 
Note that $\mu_k(\textbf{a},\textbf{x}, B)$ is indeed another labeled $\A$-seed, as $B'$ is skew-symmetrizable and $\textbf{a}'$ again consists of algebraically independent elements generating $\mc{F}$. One can check that $\mu_k$ is an involution. 



\subsection{Seed Patterns and Exchange Graphs}

We organize all seeds obtainable from each other by a sequence of mutations in a \emph{seed pattern}. Let $\TT_n$ denote the (infinite) $n$-regular tree with edges labeled with $1, \dots, n$ so that no vertex is in two edges with  the same label.

\begin{defn} A rank $n$ \emph{$\A$-seed pattern} (respectively, \emph{$\X$-seed pattern}) $\mc{S}$ is an assignment of labeled $\A$-seeds (respectively $\X$-seeds) $\Sigma_t$ to the vertices $t$ of $\TT_n$ so that if $t$ and $t'$ are connected by an edge labeled $k$, then $\Sigma_t=\mu_k(\Sigma_{t'})$.
\end{defn}
 
Since mutation is involutive, a seed pattern $\mc{S}$ is completely determined by the choice of a single seed $\Sigma$; we write  $\mc{S}(\Sigma)$ for the seed pattern containing $\Sigma$. Note that in the language of Fomin and Zelevinsky, an $\A$-seed pattern is a ``seed pattern" and an $\X$-seed pattern is a ``$Y$-pattern".

Given an $\A$-seed pattern $\mc{S}(\textbf{a}, \textbf{x}, B)$ , one can obtain two $\X$-seed patterns. The first is $\mc{S}|_{\X}:=\mc{S}(\textbf{x}, B)$, the $\X$-seed pattern in $\PP$ obtained by simply ignoring the $\A$-clusters of every seed. The second is an $\X $-seed pattern in $\mc{F}$ whose construction is outlined in the following proposition. 

\begin{prop}[\protect{\cite[Proposition 3.9]{FZ4}}] Let $\mc{S}=\{\Sigma_t\}_{t \in \TT_n}$ be an $\A$-seed pattern in $\mc{F}$. For a seed $\Sigma_t=(\textbf{a}, \textbf{x}, B)$ of $\mc{S}$ with 
\[ \textbf{a}=(a_1, \dots, a_n), ~ \textbf{x}=(x_1, \dots, x_n), ~ \text{and } B=(b_{ij})
\]
let $\hat{\Sigma}_t=(\hat{\textbf{x}}, B)$ where $\hat{\textbf{x}}=(\hat{x}_1, \dots, \hat{x}_n)$ is the $n$-tuple of elements of $\mc{F}$ given by

\[ \hat{x}_j= x_j \prod_i a_i^{b_{ij}}.
\]

Then $\hat{\mc{S}}:=\{\hat{\Sigma}_t\}_{t \in \TT_n}$  is an $\X$-seed pattern in $\mc{F}$. In other words, if $\mu_k(\Sigma_t)=\Sigma_{t'}$ , then $\mu_k(\hat{\Sigma}_t)=\hat{\Sigma}_{t'}$. 
\end{prop}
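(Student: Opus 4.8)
The plan is to verify directly that the assignment $\Sigma_t \mapsto \hat\Sigma_t$ respects mutation, i.e.\ that $\mu_k(\hat\Sigma_t)=\hat\Sigma_{t'}$ whenever $\mu_k(\Sigma_t)=\Sigma_{t'}$. Since the matrix part of $\hat\Sigma_t$ is just $B$, and matrix mutation in the $\X$-seed mutation $\mu_k$ is the same as in the $\A$-seed mutation, the matrix coordinate is automatic. So the entire content is the identity
\[
\hat x_j' =
\begin{cases}
\hat x_k^{-1} & j=k,\\[2pt]
\hat x_j\,\bigl(\hat x_k^{\,\mathrm{sgn}(-b_{kj})}\oplus 1\bigr)^{-b_{kj}} & j\neq k,
\end{cases}
\]
where $\hat x_j' := x_j' \prod_i (a_i')^{b_{ij}'}$ is computed from the mutated $\A$-seed $(\textbf{a}',\textbf{x}',B')$, and the right-hand side uses the $\hat x$'s of the original seed. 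Note that the auxiliary addition $\oplus$ appearing here is the one in $\mc F$, which for the distinguished elements $x_k\oplus 1$ versus $\hat x_k\oplus 1$ must be reconciled — here one uses that $\hat x_k\oplus 1 = \tfrac{1}{?}\cdots$; more precisely the key sub-identity is that $\hat x_k \oplus 1$ and $x_k\oplus 1$ are related by the factor $\prod_{b_{ik}>0} a_i^{b_{ik}}$, which is exactly what makes the numerator of the exchange relation \eqref{exchangerel} factor nicely.

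Concretely, I would proceed in the following steps. First, treat $j=k$: here $x_k'=x_k^{-1}$ and $b_{ik}'=-b_{ik}$, so $\hat x_k' = x_k^{-1}\prod_i a_i^{-b_{ik}} = \hat x_k^{-1}$, since $a_i'=a_i$ for $i\neq k$ and the $a_k$-exponent is $b_{kk}=0$ on both sides; this case is immediate. Second, treat $j\neq k$ with $b_{kj}=0$: then $b_{ij}'=b_{ij}$ for all $i$ (including $i=k$, where both are $0$), and $x_j'=x_j$, so $\hat x_j'=\hat x_j$, matching the formula since $(\cdots)^{0}=1$. Third — the main case — take $j\neq k$ with $b_{kj}\neq 0$. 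Here one must expand $\hat x_j' = x_j(x_k^{\mathrm{sgn}(-b_{kj})}\oplus 1)^{-b_{kj}}\cdot \prod_i (a_i')^{b_{ij}'}$, substitute the exchange relation $a_k' = \frac{1}{a_k}\cdot\frac{x_k\prod_{b_{ik}>0}a_i^{b_{ik}}+\prod_{b_{ik}<0}a_i^{-b_{ik}}}{x_k\oplus 1}$ for the $i=k$ factor, use the matrix mutation rule \eqref{matrixmut} to rewrite $b_{ij}'$ in terms of $b_{ij}, b_{ik}, b_{kj}$, and collect powers of each $a_i$. The bookkeeping splits according to the sign of $b_{kj}$ (equivalently the sign of $-b_{kj}$, which controls whether the exchange-relation numerator contributes its ``$x_k$ term'' or its ``$1$ term'' after clearing denominators), and one checks that the $a_i$-exponents on both sides agree for every $i$, and that the $\PP$- and $x_k\oplus 1$-contributions assemble into exactly $\hat x_j(\hat x_k^{\mathrm{sgn}(-b_{kj})}\oplus 1)^{-b_{kj}}$. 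A clean way to organize this is to verify it first when $b_{kj}<0$ (so $\mathrm{sgn}(-b_{kj})=1$ and one needs $\hat x_k\oplus 1$), then deduce the $b_{kj}>0$ case either by the analogous computation or by invoking that $\mu_k$ is an involution together with the already-established $j=k$ case.

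The main obstacle is purely the exponent bookkeeping in the third step: one has three regimes of matrix mutation (the $i=k$ row, the entries with $b_{ik}b_{kj}>0$, and the rest), and one must track how the $\prod_{b_{ik}>0}$ versus $\prod_{b_{ik}<0}$ split in the exchange relation interacts with them, all while keeping the $\PP$-valued coefficients $x_\bullet$ and the auxiliary-addition terms separate from the $a_i$'s. Since $\mc F\cong\QQ\PP(t_1,\dots,t_n)$ and the $a_i$ are algebraically independent over $\QQ\PP$, matching exponents of the $a_i$ separately from the $\QQ\PP$-part is legitimate, which makes the verification a finite (if tedious) check rather than anything subtle. No genuinely new idea is needed beyond carefully combining \eqref{matrixmut}, \eqref{exchangerel}, and \eqref{xmut}; this is why the statement is attributed to \cite[Proposition 3.9]{FZ4} and the proof is essentially a computation.
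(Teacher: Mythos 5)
The paper offers no proof of this proposition --- it is imported verbatim from \cite[Proposition 3.9]{FZ4} --- so there is no internal argument to compare against; your plan is the standard direct verification, which is essentially how the original source proves it, and it does go through. Your treatment of the cases $j=k$ and $j\neq k$, $b_{kj}=0$ is correct as written. The one genuinely muddled spot is your description of the ``key sub-identity'' (the sentence ending in $\hat x_k\oplus 1=\tfrac{1}{?}\cdots$): the reconciliation you are groping for is the following. In the main case, say $b_{kj}>0$, after substituting the exchange relation into $(a_k')^{b_{kj}'}=(a_k')^{-b_{kj}}$ and absorbing the exponent shifts $b_{ij}'-b_{ij}=b_{ik}b_{kj}$ (for $b_{ik}>0$) into that same factor, everything collapses to
\[
\hat x_j' \;=\; \hat x_j\,\bigl(x_k^{-1}\oplus 1\bigr)^{-b_{kj}}\left(\frac{x_k\bigl(1+\hat x_k^{-1}\bigr)}{x_k\oplus 1}\right)^{-b_{kj}},
\]
and the point is the semifield identity $(x_k^{-1}\oplus 1)\,x_k=x_k\oplus 1$ (distributivity of $\oplus$), which kills every occurrence of the auxiliary addition and leaves $\hat x_j\,(\hat x_k^{-1}+1)^{-b_{kj}}$ with \emph{ordinary} addition in $\mc F$. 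So you should state explicitly that the $\X$-seed pattern $\hat{\mc S}$ lives in $\mc F$ with ordinary addition playing the role of $\oplus$ (the $\oplus$ of $\PP$ appears only inside $x_k\oplus 1$ and cancels identically), and note that $\hat x_k^{\pm 1}+1\neq 0$ because the $a_i$ are algebraically independent over $\QQ\PP$, so the mutated tuple is well defined. With that clarification the remaining work is exactly the finite exponent bookkeeping you describe, and your suggestion to get $b_{kj}>0$ from $b_{kj}<0$ via involutivity is legitimate.
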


One can think of $\hat{\mc{S}}$ as recording the ``exchange information" of $\mc{S}(\textbf{a}, \textbf{x}, B)$; indeed, the $\X$-variables of $\hat{\mc{S}}$ are rational expressions whose numerators and denominators are, up to multiplication by an element of $\PP$, the two terms on the right hand side of an exchange relation of $\mc{S}$.

Note that seed patterns may carry redundant information, in that the same seed can be assigned to multiple vertices of $\TT_n$. Further, two labeled seeds in a seed pattern may be the same up to relabeling. We remedy this by calling two labeled seeds $\Sigma=(\textbf{a}, \textbf{x}, B)$ and $\Sigma'=(\textbf{a}', \textbf{x}', B')$ equivalent (and writing $\Sigma \sim \Sigma'$) if one can obtain $\Sigma'$ by simultaneously reindexing $\textbf{a}$, $\textbf{x}$, and the rows and columns of $B$. We define an analogous equivalence relation for $\X$-seeds, also denoted $\sim$.

An \emph{$\A$-seed} in $\mc{F}$ (respectively \emph{$\X$-seed} in $\PP$) is an equivalence class of labeled $\A$-seeds in $\mc{F}$ (respectively labeled $\X$-seeds in $\PP$) with respect to $\sim$. The seed represented by the labeled seed $\Sigma$ is denoted $[\Sigma]$. We mutate a seed $[\Sigma]$ by applying a mutation $\mu_k$ to $\Sigma$ and taking its equivalence class.


\begin{defn} The \emph{exchange graph} of a seed pattern $\mc{S}$ is the ($n$-regular connected) graph whose vertices are the seeds in $\mathcal{S}$ and whose edges connect seeds related by a single mutation. Equivalently, the exchange graph is the graph one obtains by identifying the vertices $t, t'$ of $\TT_n$ such that $\Sigma_t \sim \Sigma_{t'}$.
\end{defn}

Exchange graphs were defined for $\A$-seed patterns in \cite[Definition 7.4]{FZ1}, but can equally be defined for $\X$-seed patterns as we do here. It is conjectured that the exchange graph of an $\A$-seed pattern $\mc{S}=\mc{S}(\textbf{a}, \textbf{x}, B)$ depends only on $B$ \cite[Conjecture 4.3]{FZ4}, meaning that $\mc{S}|_{\X}$ does not influence the combinatorics of $\mc{S}$. The exchange graphs of $\mc{S}|_{\X}$ and $\hat{\mc{S}}$ can be obtained by identifying some vertices of the exchange graph of $\mc{S}$, as passing to either $\X$-seed pattern preserves mutation and the equivalence of labeled seeds. It is not known in general when any pair of these exchange graphs is equal.


\section{Finite Type Seed Patterns}

We now restrict our attention to seed patterns of finite type.

\begin{defn} An $\A$-seed pattern is of \emph{finite type} if it has finitely many seeds. 
\end{defn}

Finite type seed patterns were classified completely in \cite{FZ2}; they correspond exactly to finite (reduced crystallographic) root systems, or equivalently, finite type Cartan matrices (see for example \cite[Chapter 5]{Bour}).

For a skew-symmetrizable integer matrix $B=(b_{ij})$, its \emph{Cartan counterpart} is the matrix $A(B)=(a_{ij})$ defined by $a_{ii}=2$ and $a_{ij}=-|b_{ij}|$ for $i \neq j$.

\begin{thm}[\protect{\cite[Theorems 1.5-1.7]{FZ2}}]
\begin{enumerate}[(i.)]
\item An $\A$-seed pattern is of finite type if and only if the Cartan counterpart of one of its exchange matrices is a finite type Cartan matrix. 
\item Suppose $B$, $B'$ are skew-symmetrizable integer matrices such that $A(B)$, $A(B')$ are finite type Cartan matrices. Then $A(B)$ and $A(B')$ are of the same Cartan-Killing type if and only if $B$ and $B'$ are mutation equivalent (modulo simultaneous relabeling of rows and columns.).
\end{enumerate}
\end{thm}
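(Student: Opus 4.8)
The plan is to follow the standard route: the rank-$2$ computation as the base case, and the model of finite type via root systems for the hard part. First I would pass from $\mc S$ to the exchange matrix $B$ alone, using that finiteness of a seed pattern is a property of the mutation class of $B$ (in finite type the exchange graph does not depend on the chosen $\A$- and $\X$-clusters, so both directions reduce to matrices). The base case is rank $2$, $B$ having off-diagonal entries $b$ and $-c$ with $b,c\ge 0$: the cluster variables satisfy an alternating two-term recursion $y_{m-1}y_{m+1}=y_m^{\,b}+1$ or $y_m^{\,c}+1$ according to the parity of $m$, and one checks directly that the resulting sequence is periodic --- equivalently the seed pattern is finite --- if and only if $bc\le 3$, which gives exactly the rank-$2$ finite-type Cartan matrices $A_1\times A_1,\ A_2,\ B_2,\ G_2$. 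A corollary to be used later: every matrix in the mutation class of a finite-type $B$ is \emph{$2$-finite}, i.e.\ satisfies $b_{ij}b_{ji}\ge -3$ for all $i\ne j$.

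For the ``if'' direction, let $A=A(B)$ be a finite-type Cartan matrix of Dynkin type $X$, with root system $\Phi$, simple roots $\Pi$, positive roots $\Phi_{>0}$, and almost positive roots $\Phi_{\ge-1}=\Phi_{>0}\sqcup(-\Pi)$. Using the compatibility degree on $\Phi_{\ge-1}$ and the piecewise-linear operations $\tau_\pm$, form the clique complex $\Delta(X)$ of the compatibility relation; standard root combinatorics shows its maximal faces ``clusters'' all have $n$ elements and that $\Delta(X)$ is the boundary of the type-$X$ generalized associahedron, with $n$-regular vertex--edge graph. I would then match this with $\mc S(B)$ by choosing $B$ (after mutation) to be a bipartite Dynkin orientation and proving, via the recursion for mutation together with the Laurent phenomenon and positivity in that seed, that the denominator-vector map $a\mapsto\mathbf d(a)$ is a bijection from the set of cluster variables onto $\Phi_{\ge-1}$, under which clusters correspond to clusters of $\Delta(X)$ and mutation to flips. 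Hence $\mc S(B)$ has $|\Phi_{\ge-1}|=|\Phi_{>0}|+n$ cluster variables and finitely many seeds --- so it is of finite type --- and its exchange graph is the generalized associahedron of type $X$.

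The converse of (i) is the crux. If $\mc S(B)$ is of finite type, one must locate in the mutation class of $B$ a matrix with finite-type Cartan counterpart. The rank-$2$ base case already forces every matrix in the class to be $2$-finite; the real content is the combinatorial assertion that a $2$-finite matrix whose mutation class is finite is, up to relabeling, mutation equivalent to an orientation of a Dynkin diagram. I would establish this by a close study of \emph{diagram mutation} --- how $\mu_k$ acts on the weighted graph of $A(B')$ --- showing that every such class has a representative whose diagram is a tree or one of finitely many exceptional ``affine'' graphs, and then eliminating the non-Dynkin possibilities: a non-Dynkin tree diagram contains an extended (affine) Dynkin subtree $\tilde Y$, whose associated sub-seed-pattern is of affine type and hence infinite (by the affine analogue of the previous paragraph, or by exhibiting infinitely many cluster variables directly), contradicting finite type; the remaining $2$-finite cyclic diagrams are orientations of $\tilde A_m$ and are excluded in the same way. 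This classification of $2$-finite finite-mutation-type diagrams, together with the affine-subdiagram elimination, is the technically heaviest part and the step I expect to demand the most work, since it requires controlling diagram mutation and the size of the seed pattern simultaneously.

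Finally, part (ii). If $B\sim B'$ they lie in one mutation class $\mc M$, which by the above contains a representative with a Dynkin diagram; the Dynkin type of such a representative is an invariant of $\mc M$ --- it is determined by the rank, the number $|\Phi_{\ge-1}|$ of cluster variables, and the mutation-invariant diagonal symmetrizer of the matrices in $\mc M$, which together separate all finite types (distinguishing, for example, $B_n$ from $C_n$) --- so $A(B)$ and $A(B')$ have the same Cartan--Killing type. Conversely, if $A(B)$ and $A(B')$ are of the same Dynkin type $X$, then each of $B,B'$ (having a Dynkin matrix as its Cartan counterpart) is an orientation of the Dynkin diagram of $X$, and since any two orientations of a tree are joined by source--sink mutations (which merely reverse the arrows at a source or sink), both $B$ and $B'$ are mutation equivalent, up to relabeling, to the standard orientation of $X$, hence to one another.
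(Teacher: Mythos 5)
This statement is quoted from \cite[Theorems 1.5--1.7]{FZ2} and the paper gives no proof of it, so there is nothing internal to compare against; your sketch is, in outline, a faithful reconstruction of the Fomin--Zelevinsky argument (rank-$2$ base case and $2$-finiteness, denominator vectors onto almost positive roots for the ``if'' direction, elimination of affine subdiagrams for the converse, sink--source reflections on tree orientations for the easy half of (ii)). Two remarks. First, the step you yourself flag as heaviest --- that a $2$-finite matrix is mutation equivalent to an orientation of a Dynkin diagram --- is genuinely the bulk of \cite{FZ2} and is only gestured at here; as stated it is a correct plan, not a proof, and note that $2$-finiteness alone already suffices (you do not need to carry ``finite mutation class'' as an extra hypothesis). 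Second, your forward direction of (ii) via the invariants (rank, number of cluster variables, skew-symmetrizer) is a legitimate variant of the original argument, but be aware that the symmetrizer is needed for more than separating $B_n$ from $C_n$: at rank $6$ the types $B_6$, $C_6$ and $E_6$ all have $36$ positive roots, so without the symmetrizer the count does not determine the type.
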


In light of this theorem, we refer to a finite type $\A$-seed pattern as type $A_n$, $B_n$, etc.


\begin{defn} An $\X$-seed pattern $\mc{S}$ is of \emph{type $Z_n$} if the Cartan counterpart of one of its exchange matrices is a type $Z_n$ Cartan matrix.
\end{defn}

We will call such $\X$-seed patterns \emph{Dynkin type} rather than finite type, since not all $\X$-seed patterns with finitely many seeds are of this form. For example, let $\PP$ be any tropical semifield, and $B$ the matrix

\[ \begin{bmatrix} 0 & 2\\
-2 & 0\\
\end{bmatrix}.
\] 

Then the $\X$-seed pattern $\mc{S}((1, 1), B)$ in $\PP$ has a single unlabeled seed, but the Cartan counterpart of $B$ is not finite type. In general, if $B$ is an $n \times n$ skew-symmetrizable matrix with finite mutation equivalence class, then the $\X$-seed pattern $\mc{S}((1, \dots, 1), B)$ in a tropical semifield will have finitely many seeds. Indeed, in this case, mutation of the initial seed $((1, \dots, 1), B)$ in direction $k$ results in the seed $((1, \dots, 1), \mu_k(B))$. It follows that any sequence of mutations results in a seed with every $\X$-variable equal to 1, so seeds are distinguished from each other only by their exchange matrices. As $B$ is mutation equivalent to finitely many matrices, there are only finitely many seeds in the seed pattern. 



\subsection{Triangulations for types $A$ and $D$}
The material in this section is part of a more general theory of $\A$-seed patterns from surfaces, developed in \cite{FST}.

Let $\textbf{P}_n$ denote a convex $n$-gon and $\textbf{P}^\bullet_n$ denote a convex $n$-gon with a distinguished point $p$ (a \emph{puncture}) in the interior. For $\textbf{P} \in \{\textbf{P}_n, \textbf{P}^\bullet_n\}$, the vertices and puncture of $\textbf{P}$ are called \emph{marked points}. An \emph{arc} of $\textbf{P}$ is a non-self-intersecting curve $\gamma$ in $\textbf{P}$ such that the endpoints of $\gamma$ are distinct marked points, the relative interior of $\gamma$ is disjoint from $\partial \textbf{P}\cup \{p\}$, and $\gamma$ does not cut out an unpunctured digon. An arc incident to the puncture $p$ is a \emph{radius}. Arcs are considered up to isotopy.

\begin{figure}
\centering
\includegraphics[width=0.8\textwidth]{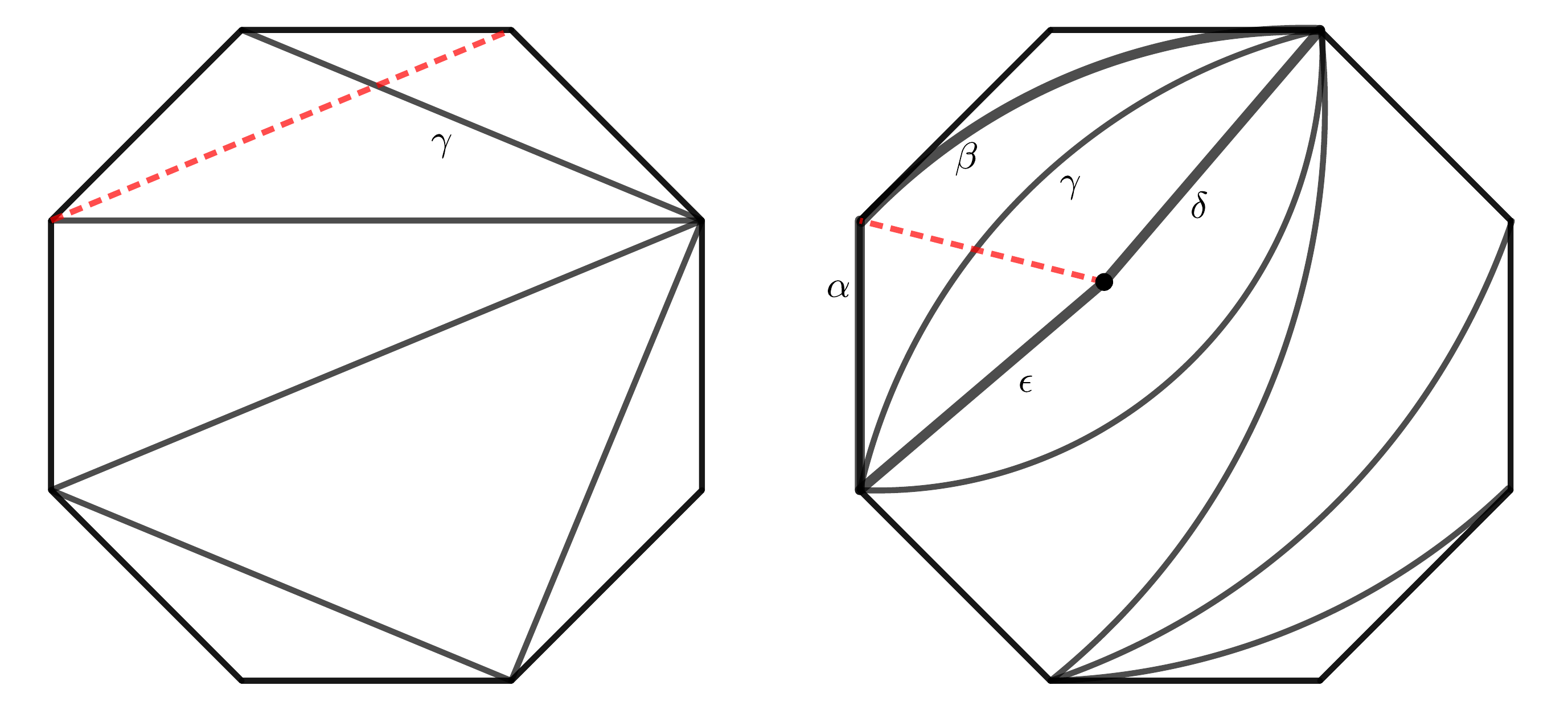}
\caption{Triangulations of $\textbf{P}_8$ and $\textbf{P}_8^\bullet$ are shown in solid lines; the dashed arc is the flip of $\gamma$. On the right, the quadrilateral $q_T(\gamma)=\{\alpha, \beta, \delta, \epsilon\}$ is shown in bold.}
\label{fig:AD}
\end{figure}

A \emph{tagged arc} of $P$ is either an ordinary arc between two vertices or a radius that is labeled either ``notched" or ``plain." Two tagged arcs $\gamma$, $\gamma'$ are \emph{compatible} if their untagged versions do not cross (or to be precise, there are two noncrossing arcs isotopic to $\gamma$ and $\gamma'$) with the following modification: if $\gamma$ is a notched radius and $\gamma'$ is plain, they are compatible if and only if their untagged versions coincide.

A \emph{tagged triangulation} $T$ is a maximal collection of pairwise compatible tagged arcs. All tagged triangulations of $\textbf{P}$ consist of the same number of arcs. 

\begin{defn}Let $T$ be a tagged triangulation, $\gamma$ an arc in $T$, and $\gamma'$ either an arc in $T$ or a boundary segment. Then $\gamma$ and $\gamma'$ are \emph{adjacent in $T$} if they are adjacent in a triangle of $T$ or if there is a third arc $\alpha$ in $T$ such that $\{\alpha, \gamma, \gamma' \}$ form a once-punctured digon with a radius (see Figure \ref{fig:qtex}). 
\end{defn}

In particular, if $T$ contains two arcs forming a once-punctured digon, the two sides of the digon are adjacent, each side is adjacent to each radius inside the digon, and the two radii inside the digon are \emph{not} adjacent.

\begin{defn} Let $T$ be a tagged triangulation, and $\gamma$ an arc in $T$. The \emph{quadrilateral} $q_{T}(\gamma)$ of an arc $\gamma$ in $T$ consists of the arcs of $T$ and boundary segments adjacent to $\gamma$ if $\gamma$ is not a radius in a once-punctured digon (see Figure \ref{fig:AD}); if $\gamma$ is a radius in a once-punctured digon, $q_{T}(\gamma)$ consists of the arcs and boundary segments adjacent to $\gamma$ together with the two radii compatible with $\gamma$ in the once-punctured digon (see Figure \ref{fig:quadex}, lower right). The arc $\gamma$ is a \emph{diagonal} of its quadrilateral.
\end{defn}

Note that if $\gamma$ is a radius in a once-punctured digon, $q_T(\gamma)$ is not part of any tagged triangulation.

The following result gives us a local move on tagged triangulations.

\begin{prop}[\protect{\cite[Theorem 7.9]{FST}}] Let $T=\{\gamma_1, \dots, \gamma_n\}$ be a tagged triangulation of $\textbf{P}$. For all $k$, there exists a unique tagged arc $\gamma_k'\neq \gamma_k$ such that $\mu_k(T):=T \setminus \{\gamma_k\} \cup \{\gamma_k'\}$ is a tagged triangulation of $\textbf{P}$.
\end{prop}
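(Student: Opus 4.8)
The plan is to make the flip entirely a \emph{local} matter and then run a short case analysis. Fix $\gamma_k\in T$ and let $R=R(\gamma_k)$ be the marked subsurface of $\textbf{P}$ cut out by the (at most two) tagged triangles of $T$ incident to $\gamma_k$, glued along their common sides. If $\gamma'$ is any tagged arc with $T':=T\setminus\{\gamma_k\}\cup\{\gamma'\}$ a tagged triangulation, then $\gamma'$ is compatible with every arc of $T\setminus\{\gamma_k\}$, in particular with the boundary arcs of $R$; hence, up to isotopy, $\gamma'$ lies inside $R$. Consequently the restriction of $T'$ to the arcs lying in the interior of $R$ is a tagged triangulation of $R$ containing all arcs of $T$ interior to $R$ other than $\gamma_k$. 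So existence and uniqueness of $\gamma_k'$ reduce to the following purely local statement: for the surface $R$, equipped with the tagged triangulation $T|_R$ and the distinguished arc $\gamma_k$, there is exactly one tagged arc $\gamma_k'\neq\gamma_k$ with $T|_R\setminus\{\gamma_k\}\cup\{\gamma_k'\}$ a tagged triangulation of $R$.

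The key step is then to show that $R$ is one of only two surfaces: an (unpunctured) quadrilateral, or a once-punctured digon. This is where the tagged framework does the work. Since $T$ contains no self-folded triangles, the triangle(s) of $T$ incident to $\gamma_k$ are honest triangles with distinct vertices. Gluing two distinct such triangles along $\gamma_k$ alone yields a quadrilateral with four distinct boundary marked points (one of which may be the puncture $p$, but with no puncture in the interior), and $\gamma_k$ is then the unique interior arc of $R$. The only alternative is that the two triangles are identified along a second edge as well; a short check — using that $\textbf{P}$ has nonempty boundary, so it admits no closed subsurface, and again that there are no self-folds — shows this forces the two triangles to share all their vertices, with $\gamma_k$ and the second glued edge meeting at a vertex that becomes an interior puncture. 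In that case $R$ is a once-punctured digon (all of $\textbf{P}$ when $\textbf{P}=\textbf{P}_2^\bullet$), $\gamma_k$ is a radius in a once-punctured digon of $T$, and $\gamma_k$ is one of the two interior arcs of $R$.

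It remains to verify the flip in each case. If $R$ is a quadrilateral, then $T|_R=\{\gamma_k\}$ and $\gamma_k$ is a diagonal; a quadrilateral has exactly two triangulations, $\{\gamma_k\}$ and $\{\gamma_k'\}$ with $\gamma_k'$ the other diagonal, so the flip within $R$ is $\gamma_k'$. If $\gamma_k'$ is incident to the puncture of $\textbf{P}$ we must also choose its tag: since $T\setminus\{\gamma_k\}$ still contains a tagged arc at that puncture, and in a tagged triangulation the tags at a puncture are either all equal or form a single conjugate pair, exactly one tagging of $\gamma_k'$ makes $T\setminus\{\gamma_k\}\cup\{\gamma_k'\}$ a tagged triangulation. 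If $R$ is a once-punctured digon, then $T|_R$ is one of the four tagged triangulations of $\textbf{P}_2^\bullet$, namely $\{r_u,r_v\}$, $\{r_u,r_u^{\bowtie}\}$, $\{r_v,r_v^{\bowtie}\}$, $\{r_u^{\bowtie},r_v^{\bowtie}\}$, where $r_u,r_v$ are the plain radii to the puncture from the two vertices $u,v$ and $(\cdot)^{\bowtie}$ denotes the notched tagging; these four form a single $4$-cycle under flips, so $\gamma_k$ again has a unique flip, found by direct inspection of this tiny surface. In both cases $\gamma_k'\neq\gamma_k$ by construction, and since $T\setminus\{\gamma_k\}\cup\{\gamma_k'\}$ is pairwise compatible and has $n=|T|$ arcs — the common number of arcs in every tagged triangulation of $\textbf{P}$ — it is itself a tagged triangulation.

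The main obstacle is the classification step: without the tagged machinery $\gamma_k$ could be the folded side of a self-folded triangle and admit no flip at all, and the entire purpose of working with tagged arcs is to force the local picture $R$ to be one of the two harmless shapes. A technically cleaner, essentially equivalent route is to invoke the bijection of \cite{FST} between tagged triangulations of $\textbf{P}_n^{\bullet}$ and ordinary triangulations of $\textbf{P}_n^{\bullet}$, under which every flip except the exceptional move inside a once-punctured digon corresponds to an ordinary diagonal flip; the classical flip lemma for $\textbf{P}_n$ then handles the unpunctured case and the generic punctured case, leaving only the once-punctured digon to check by hand.
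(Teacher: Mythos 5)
The paper offers no proof of this statement; it is imported verbatim from \cite[Theorem 7.9]{FST}, so there is no in-paper argument to compare against. Judged on its own, your strategy (localize to the star of $\gamma_k$, then classify the local picture) is the right one, and your argument is complete for the unpunctured polygon $\textbf{P}_{n}$. For $\textbf{P}_n^\bullet$, however, the classification step has a genuine gap.

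The assertion that ``the triangle(s) of $T$ incident to $\gamma_k$ are honest triangles with distinct vertices'' fails in exactly the situation the tagged formalism exists to handle. Suppose $T$ contains a conjugate pair $r_u, r_u^{\bowtie}$ (the tagged avatar of a self-folded triangle), enclosed in a once-punctured digon with sides $\alpha,\beta$ from $u$ to $v$. The complementary region of $T$ inside that digon is not a triangle: cutting the digon along the single underlying radius gives a quadrilateral with vertex cycle $v,u,p,u$; equivalently, in the associated ideal triangulation the relevant triangle is $\{\alpha,\beta,\ell\}$ with vertices $u,v,u$ not distinct. Hence when $\gamma_k=\alpha$ (or $\beta$), the region $R$ is a once-punctured triangle containing \emph{three} arcs of $T$, namely $\alpha, r_u, r_u^{\bowtie}$ --- neither a quadrilateral with four distinct vertices nor a once-punctured digon --- and your two-case analysis never treats it. Your fallback via the tagged/ordinary bijection does not rescue this case either: there the flip of $\alpha$ happens in a degenerate quadrilateral with two opposite vertices identified ($w,u,v,u$), which the classical flip lemma for convex polygons does not literally cover. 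The fix is short but necessary: verify directly that in the once-punctured triangle with tagged triangulation $\{\alpha,r_u,r_u^{\bowtie}\}$ the arc $\alpha$ has a unique flip (the arc from the opposite vertex $w$ to $u$ separating $p$ from $v$), e.g.\ by listing all tagged arcs compatible with a conjugate pair. A smaller soft spot: the reduction ``$\gamma'$ is compatible with $T\setminus\{\gamma_k\}$, hence lies inside $R$'' needs the tagged caveat that $\gamma'$ may instead be a retagging of an arc already present (this is precisely what happens in your digon $4$-cycle), so containment in $R$ should be argued at the level of untagged representatives.
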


The arc $\gamma'$ in the above proposition is called the $\emph{flip}$ of $\gamma$ with respect to $T$ (or with respect to $q_T(\gamma)$, since $\gamma$ and $\gamma'$ are exactly the two diagonals of $q_T(\gamma)$). 

We define the \emph{flip graph} of $\textbf{P}$ to be the graph whose vertices are tagged triangulations of $\textbf{P}$ and whose edges connect triangulations that can be obtained from each other by flipping a single arc. The flip graph of $\textbf{P}$ is connected. 

We can encode a tagged triangulation $T=(\gamma_1, \dots, \gamma_n)$ in a skew-symmetric $n \times n$ integer matrix $B(T)$. The nonzero entries in $B(T)$ correspond to pairs of adjacent arcs; the sign of these entries records the relative orientation of the arcs.

\begin{figure}
\centering
\includegraphics[width=\textwidth]{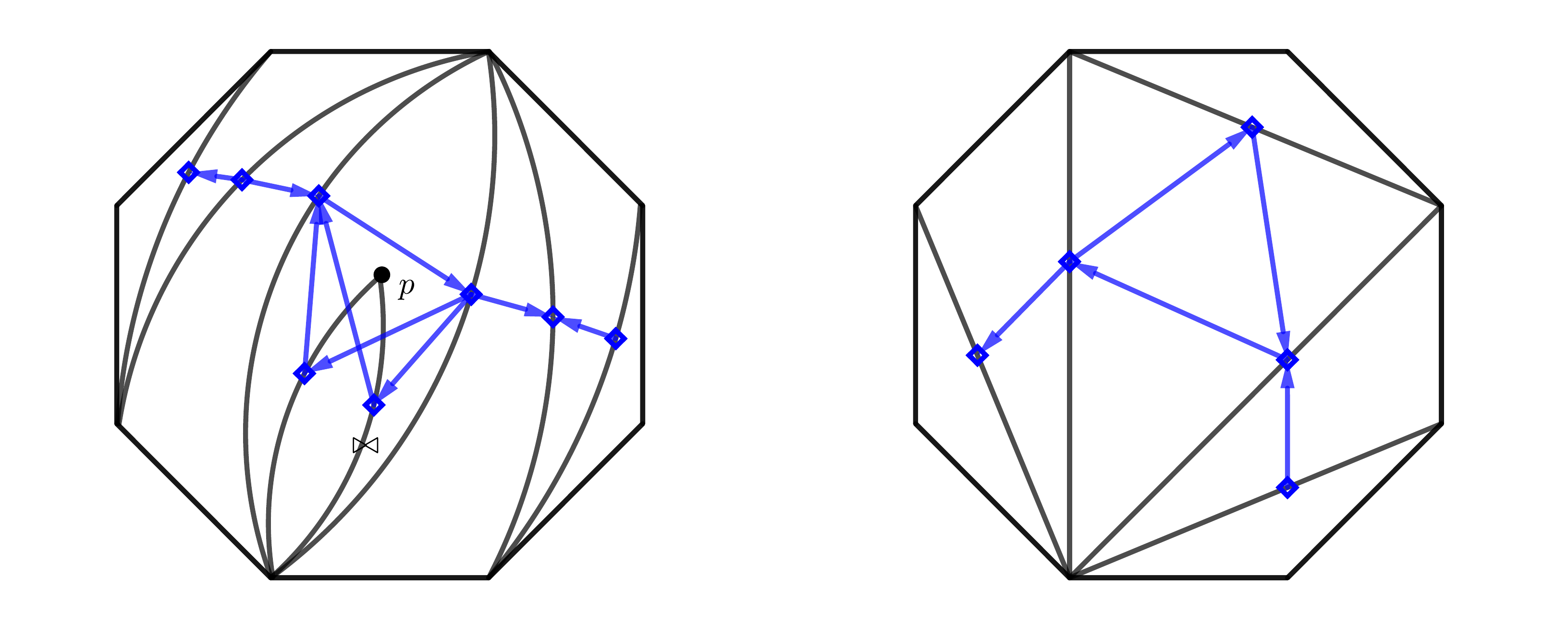}
\caption{\label{fig:qtex} Triangulations of $\textbf{P}_8$ and $\textbf{P}_8^\bullet$ and the associated quivers $Q(T)$.}
\end{figure}

To find the entries of $B(T)$, we define a quiver $Q(T)$. Place a vertex $i$ in the interior of each arc $\gamma_i$. Then put an arrow from $i$ to $j$ if $\gamma_i$ and $\gamma_j$ are two sides of a triangle in $T$ and $i$ immediately precedes $\gamma_j$ moving clockwise around this triangle, unless $\gamma_i$ and $\gamma_j$ are arcs in a triangle containing the notched and plain versions of the same radius. In this case, add arrows as shown in Figure \ref{fig:qtex}. Finally, delete a maximal collection of 2-cycles. If there is an arrow from $i$ to $j$ in $Q(T)$, we write $i \to j$.

Let $b_{ij}(T):=\#\{ \text{arrows } i \to j \text{ in } Q(T)\}- \#\{ \text{arrows } j \to i \text{ in } Q(T)\}$. We define $B(T):=(b_{ij}(T))$.

Flips of arcs are related to matrix mutation in the following way: for a tagged triangulation $T=\{\gamma_1, \dots, \gamma_n\}$ of $\textbf{P}$, $\mu_k(B(T))=B(\mu_k(T))$, or, in words, flipping $\gamma_k$ changes $B(T)$ by mutation in direction $k$.

As the following theorem shows, these triangulations entirely encode the combinatorics of type $A$ and $D$ $\A$-seed patterns. 

\begin{thm}[\cite{FST}] \label{ADtriangulation} Let $\textbf{P}=\textbf{P}_{n+3}$ (resp. $\textbf{P}=\textbf{P}_n^\bullet$). Consider an $\A$-seed pattern $\mc{S}$ such that some exchange matrix is $B(T_0)$ for some triangulation $T_0$ of $\textbf{P}$. Then $\mc{S}$ is type $A_n$ (resp. $D_n$) and there is a bijection $\gamma \mapsto a_\gamma$ between arcs of $P$ and $\A$-variables of $\mc{S}$. Further, if $\Sigma=(\textbf{a}, \textbf{x}, B)$ is a seed of $\mc{S}$, there is a unique triangulation $T$ such that $\textbf{a}=\{a_{\gamma}\}_{\gamma \in T}$ and $B=B(T)$. Finally, mutation in direction $k$ takes the seed corresponding to $T$ to the seed corresponding to $\mu_k(T)$, implying that the exchange graph of $\mc{S}$ is isomorphic to the flip graph of $P$.
\end{thm}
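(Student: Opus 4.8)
The plan is to realize the flip graph of $\textbf{P}$ and the exchange graph of $\mc{S}$ as the \emph{same} quotient of the labeled tree $\TT_n$, so that the whole theorem reduces to showing a single map $\gamma\mapsto a_\gamma$ is well defined and injective. First I would pin down the Cartan--Killing type. Choose a distinguished triangulation $T_0$ of $\textbf{P}$ whose quiver is Dynkin: for $\textbf{P}=\textbf{P}_{n+3}$ take a ``fan'' triangulation (all diagonals from one vertex), so that $Q(T_0)$ is a path and $A(B(T_0))$ is the $A_n$ Cartan matrix; for $\textbf{P}=\textbf{P}_n^\bullet$ take the analogous fan together with a plain and a notched radius at the puncture, so that, after the prescribed modification of $Q(T)$ near the once-punctured digon, $Q(T_0)$ is a $D_n$ orientation and $A(B(T_0))$ is the $D_n$ Cartan matrix. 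By the classification of finite type seed patterns recalled above \cite{FZ2}, $\mc{S}$ is of type $A_n$ (resp.\ $D_n$). Moreover, using the flip--mutation compatibility $\mu_k(B(T))=B(\mu_k(T))$ noted just before the theorem, the set $\{B(T): T \text{ a tagged triangulation of }\textbf{P}\}$ is closed under every $\mu_k$ and contains $B(T_0)$, hence equals the mutation class of $B(T_0)$; so every exchange matrix of $\mc{S}$ has the form $B(T)$.

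Next I would fix a seed $\Sigma_0=(\textbf{a}_0,\textbf{x}_0,B(T_0))$ and, after rooting $\TT_n$ at a vertex $t_0$ mapped to $T_0$ and to $[\Sigma_0]$, obtain surjective graph morphisms $\TT_n\to(\text{flip graph})$ and $\TT_n\to(\text{exchange graph})$ compatible with edge-labels via the compatibility above. Define $a_\gamma$, for an arc $\gamma$, by choosing any tagged triangulation $T\ni\gamma$ and any path $t_0\to t$ in $\TT_n$ with $T_t=T$, running the corresponding mutations on $\Sigma_0$, and reading off the $\A$-variable in the slot of $\gamma$. The theorem follows once $\gamma\mapsto a_\gamma$ is shown to be well defined and injective: well-definedness forces the $\A$-cluster and exchange matrix attached to a vertex $t$ of $\TT_n$ to depend only on $T_t$ (they are $\{a_\gamma:\gamma\in T_t\}$ and $B(T_t)$), and injectivity forces $T_s=T_t$ whenever $\Sigma_s\sim\Sigma_t$; together these say the two quotient maps induce the same partition of the vertices of $\TT_n$, which is exactly ``arcs $\leftrightarrow$ $\A$-variables,'' ``seeds $\leftrightarrow$ triangulations,'' and ``mutation $\leftrightarrow$ flip.'' For this I would invoke the finite-type denominator-vector theorem of \cite{FZ2} (coefficient-independent in finite type): with respect to $\Sigma_0$, distinct cluster variables have distinct denominator vectors, and an induction on distance in the flip graph, using the mutation rule for denominators, shows that the slot-$\gamma$ variable always has denominator vector equal to the vector recording how $\gamma$ crosses the arcs of $T_0$ (with the standard $\pm1$ corrections at the puncture). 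Since an arc of $\textbf{P}$ is determined by this crossing vector, both well-definedness and injectivity follow; surjectivity onto the $\A$-variables is then immediate, as every cluster variable lies in some cluster and every cluster is the one attached to some $T$.

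The main obstacle is precisely this well-definedness/injectivity step --- certifying that neither the flip graph nor the exchange graph carries ``extra'' identifications relative to the other. The denominator-vector separation theorem of \cite{FZ2} does the real work, but cashing it in requires the combinatorial computation of $\mathbf{d}(a_\gamma)$, and this bookkeeping is delicate in the $D_n$ case because of tagged arcs, once-punctured digons, and the ad hoc modification of $Q(T)$ near a notched radius; the unpunctured type $A_n$ case is routine by comparison. An alternative that avoids the denominator recursion is to quote the description of the finite-type exchange graph as the generalized associahedron \cite{FZ2} and match its vertices --- the compatible subsets of almost positive roots --- with tagged triangulations of $\textbf{P}$ via the classical bijection between almost positive roots of $A_n$ (resp.\ $D_n$) and arcs of $\textbf{P}$; but one then has to check that compatibility degree zero corresponds to non-crossing of tagged arcs, which again concentrates all the difficulty in the punctured/type $D_n$ case.
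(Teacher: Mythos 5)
This theorem is not proved in the paper at all: it is imported wholesale from \cite{FST} (see also \cite[Chapter 5]{book}), so there is no ``paper's proof'' to match yours against. Judged on its own terms, your outline is a legitimate and fairly standard route to the result, and the logical reduction is sound: well-definedness of $\gamma\mapsto a_\gamma$ gives that the seed at a vertex of $\TT_n$ depends only on its triangulation, injectivity gives the converse identification, and together with connectedness of the flip graph and the unique-flip property (both quoted in the paper) these do yield all four assertions. Your instinct that the entire weight falls on ``denominator vector of the slot-$\gamma$ variable $=$ crossing vector of $\gamma$ with $T_0$'' is also right; this is essentially how the type $A_n$ case is handled in \cite{FZ2} and how Schiffler and FST treat the punctured polygon.

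Two caveats keep this from being a complete argument rather than a program. First, the denominator-vector separation theorem you invoke is proved in \cite{FZ2} for the \emph{bipartite} initial seed (cluster variables $\leftrightarrow$ almost positive roots via denominators); a fan triangulation of $\textbf{P}_{n+3}$ gives a linearly ordered quiver, not a bipartite one, so you should either take $T_0$ to be a ``snake'' (alternating) triangulation or cite a result covering acyclic initial seeds --- otherwise the separation step you lean on is not available without circularity. Second, the $D_n$ bookkeeping you defer (crossing numbers of tagged arcs, the $\pm1$ conventions at the puncture, the modified quiver in a once-punctured digon, and the check that the crossing vector determines the tagged arc, including distinguishing the plain from the notched radius) is precisely the nontrivial content of the theorem; \cite{FST} in fact do not run the denominator recursion but instead establish structural properties of the tagged arc complex and match it against the Fomin--Zelevinsky classification. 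So your proposal correctly locates the difficulty but does not discharge it; as submitted it is an accurate reduction plus a citation-shaped hole where the type $D_n$ computation should be.
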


\subsection{Triangulations for types $B$ and $C$}

To obtain triangulations whose adjacency matrices are exchange matrices of type $B_n$ and $C_n$ $\A$-seed patterns, we ``fold" triangulations of $\textbf{P}_{2n+2}$ and $\textbf{P}_{n+1}^\bullet$. This is part of a larger theory of folded cluster algebras (see \cite[Chapter 4]{book}).

 Let $G=\ZZ/2\ZZ$. We write $\textbf{P}^{~G}_{2n}$ for $\textbf{P}_{2n}$ equipped with the $G$-action taking vertex $i$ to vertex $i':=i+n$ (with labels considered modulo $2n$). This induces an action of $G$ on the arcs of $\textbf{P}_{2n}$. The triangulations of $\textbf{P}^{~G}_{2n}$ are defined to be the triangulations of $\textbf{P}_{2n}$ fixed by the $G$-action, commonly called \emph{centrally symmetric} triangulations. 

\begin{figure}
\centering
\includegraphics[width=0.8\textwidth]{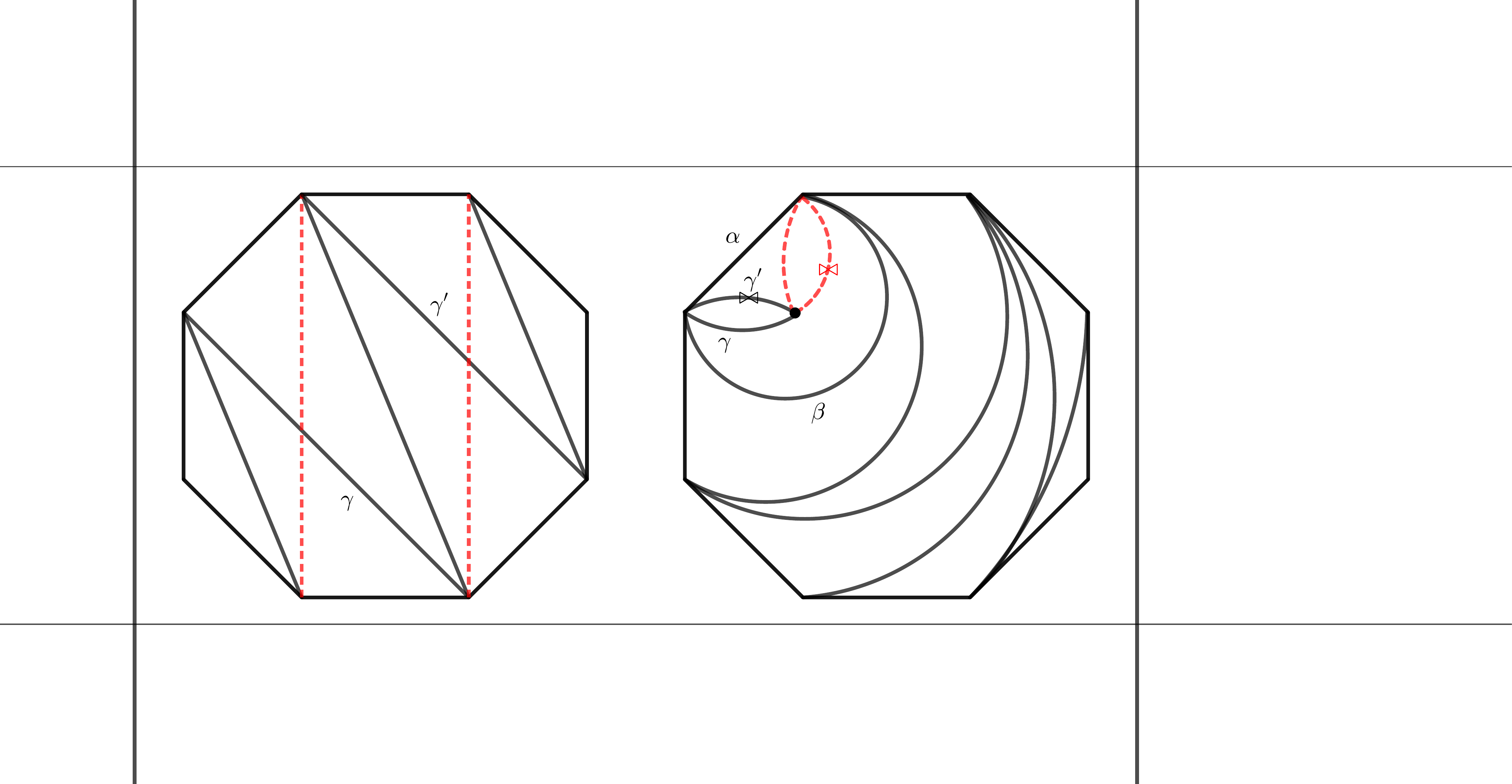}
\caption{Triangulations of $\textbf{P}_8^G$ and $\textbf{P}_8^{\bullet ~G}$ are shown in solid lines. The orbit $[\gamma]$ of $\gamma$ is $\{\gamma, \gamma'\}$ and the flip of $[\gamma]$ is dashed. On the right, $q_T([\gamma])=\{\alpha, \beta, \gamma, \gamma'\}$. }
\label{fig:BC}
\end{figure}

We write $\textbf{P}_{n}^{\bullet \ G}$ for $\textbf{P}_{n}^\bullet$ equipped with the $G$-action switching the notched and plain version of a radius. Again, the triangulations of $\textbf{P}_{n}^{\bullet \ G}$ are defined to be the triangulations of $\textbf{P}_{n}^{\bullet}$ fixed under the $G$-action, which are exactly those triangulations containing both the notched and plain versions of the same radius. 

Let $\textbf{P} \in \{\textbf{P}_{2n}, \textbf{P}_{n}^\bullet\}$, and let $T$ be a triangulation of $\textbf{P}^G$. For $\gamma \in T$, let $[\gamma]$ denote the $G$-orbit of $\gamma$. The \emph{quadrilateral} of $[\gamma]$, denoted $q_T([\gamma])$, is all of the arcs and boundary segments adjacent to arcs in $[\gamma]$ (which is exactly the $G$-orbit of $q_T(\gamma)$ as long as $\gamma$ is not a radius in a once-punctured digon). Flipping the arcs in $[\gamma]$ results in another triangulation of $\textbf{P}^G$ (which does not depend on the order of arc flips, since arcs in the same orbit are pairwise not adjacent). We define the flip graph of $\textbf{P}^{G}$ in direct analogy to that of $P$; again, it is connected.

We associate to each triangulation $T$ of $\textbf{P}^G$ a skew-symmetrizable integer matrix $B^G(T)$, whose rows and columns are labeled by $G$-orbits of arcs of $T$. Let $T=\{\gamma_1, \dots, \gamma_m\}$, and let $I, J$ be the indices of arcs in the $G$-orbit of $\gamma_i$ and $\gamma_j$. Then

\begin{equation} \label{eq:foldedexch}b^G_{IJ}(T)= \sum_{i \in I} b_{ij}(T)
\end{equation}

where $B(T)=(b_{ij}(T))$ is the usual signed adjacency matrix of $T$. In terms of $Q(T)$, $b^G_{IJ}(T)$ is the total number of arrows from all representatives of $I$ to a fixed representative of $J$. The entries of $B^G(T)$ are well-defined: for $g \in G$, if $g(\gamma_i)=\gamma_i'$ and $g(\gamma_j)=\gamma_j'$, then $b_{ij}=b_{i'j'}$, so the value of $b_{IJ}^G$ does not change if a different element of $J$ is used to compute \eqref{eq:foldedexch}.

Just as with usual triangulations, (orbits of) arc flips and matrix mutation interact nicely: if $T$ is a triangulation containing arc $\gamma$, flipping the arcs in $[\gamma]$ corresponds to mutating $B^G(T)$ in the direction labeled by $[\gamma]$. Further, we have the following theorem.

\begin{thm}[\protect{\cite[Section 5.5]{book}}] \label{BCtriangulation} Let $\textbf{P}=\textbf{P}_{n+1}^\bullet$ (resp. $\textbf{P} =\textbf{P}_{2n+2}$). Consider an $\A$-seed pattern $\mc{S}$ such that some exchange matrix is $B^G(T_0)$ for some triangulation $T_0$ of $\textbf{P}^G$. Then $\mc{S}$ is type $B_n$ (resp. $C_n$) and there is a bijection $[\gamma] \mapsto a_{[\gamma]}$ between arcs of $\textbf{P}$ and $\A$-variables of $\mc{S}$. Further, if $\Sigma=(\textbf{a}, \textbf{x}, B)$ is a seed of $\mc{S}$, there is a unique triangulation $T$ such that $\textbf{a}=\{a_{[\gamma]}\}_{\gamma \in T}$ and $B=B^G(T)$. Finally, mutation in direction $K$ takes the seed corresponding to $T$ to the seed corresponding to $\mu_K(T)$, implying that the exchange graph of $\mc{S}$ is isomorphic to the flip graph of $\textbf{P}^G$.
\end{thm}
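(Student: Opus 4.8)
The plan is to deduce Theorem \ref{BCtriangulation} from Theorem \ref{ADtriangulation} by a folding argument, rather than redoing the surface combinatorics from scratch. The key observation is that a centrally symmetric triangulation $T$ of $\textbf{P}^G$ is the same data as a triangulation $T$ of the ambient polygon $\textbf{P}\in\{\textbf{P}_{2n+2},\textbf{P}_{n+1}^\bullet\}$ that happens to be $G$-invariant, so Theorem \ref{ADtriangulation} already attaches to $T$ an $\A$-seed $(\textbf{a},\textbf{x},B(T))$ of a type $C_{2n+2-3}$ or $D_{n+1}$ seed pattern. First I would verify the purely matrix-theoretic fact that $B^G(T)$, defined by \eqref{eq:foldedexch}, is the folding of $B(T)$ by the group $G$ acting on the index set, and that this folding is compatible with mutation: if $G$ acts on $\TT_m$ preserving the tree structure and $B(T)$ is $G$-equivariant (meaning $g\cdot B(T)=B(T)$ after relabeling), then mutating simultaneously at all indices in an orbit $[\gamma]=I$ yields a matrix whose $G$-folding equals $\mu_I(B^G(T))$. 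This is exactly the statement ``flipping the arcs in $[\gamma]$ corresponds to mutating $B^G(T)$ in direction $[\gamma]$'' recorded just before the theorem; I would prove it by checking \eqref{matrixmut} orbit-by-orbit, using that arcs in a common $G$-orbit are pairwise non-adjacent (so the mutations at indices of $I$ commute and no index of $I$ sees another as a neighbor), which makes the folded mutation rule \eqref{eq:foldedexch} transform correctly.

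Next I would run the classification input. By Theorem \ref{FZ2}-style results quoted in the excerpt (Theorem with the Cartan counterpart criterion), it suffices to exhibit one triangulation $T_0$ of $\textbf{P}^G$ for which the Cartan counterpart $A(B^G(T_0))$ is the $B_n$ (resp.\ $C_n$) Cartan matrix. Concretely: for $\textbf{P}_{n+1}^{\bullet\,G}$ take the ``snake'' triangulation whose folded quiver is a path $1\to 2\to\cdots\to n$ with the edge at the puncture end carrying the orbit of the notched/plain radius pair, so that \eqref{eq:foldedexch} doubles exactly one arrow and produces the $B_n$ Cartan matrix; for $\textbf{P}_{2n+2}^G$ take the centrally symmetric fan triangulation whose folded quiver has one doubled arrow at the central diameter, giving $C_n$. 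Since $B^G$-mutation is realized by orbit-flips and the flip graph of $\textbf{P}^G$ is connected, every $B^G(T)$ is mutation equivalent to $B^G(T_0)$, so the whole seed pattern $\mc{S}$ is type $B_n$ (resp.\ $C_n$), establishing the first claim.

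For the bijection $[\gamma]\mapsto a_{[\gamma]}$ I would pull back the type $A/D$ bijection: define $a_{[\gamma]}:=a_\gamma$, well-defined because the ambient $\A$-seed pattern (and in particular each $\A$-variable $a_\gamma$) can be chosen $G$-equivariantly — $a_\gamma$ and $a_{g\gamma}$ satisfy identical exchange relations along the $G$-symmetric mutation path from a $G$-symmetric initial seed, hence are equal after the simultaneous relabeling, so the ambient pattern descends. Then surjectivity and the ``unique triangulation'' statement follow from the corresponding statements in Theorem \ref{ADtriangulation} restricted to the $G$-fixed locus: a seed $\Sigma$ of $\mc{S}$ unfolds to a $G$-symmetric seed of the ambient pattern, which by Theorem \ref{ADtriangulation} corresponds to a unique triangulation $T$, necessarily centrally symmetric, and $B=B^G(T)$ by the folding identity; conversely distinct triangulations of $\textbf{P}^G$ give distinct ambient triangulations, hence distinct seeds. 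Finally, mutation in direction $K=[\gamma]$ corresponds to flipping the orbit $[\gamma]$, which by the compatibility step above and Theorem \ref{ADtriangulation} corresponds to the simultaneous ambient flip $\mu_K(T)$; hence the exchange graph of $\mc{S}$ is the flip graph of $\textbf{P}^G$, using connectivity of the latter.

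The main obstacle I anticipate is the bookkeeping in the folding-compatibility step: one must be careful that the $G$-action on arcs is free on the relevant indices (true here since a centrally symmetric triangulation of $\textbf{P}_{2n+2}$ has no arc fixed by the antipodal map other than — and this is the subtle case — a diameter through two marked points, which exists only when... actually for the even polygon the central diameter is $G$-fixed and must be handled as a one-element orbit), and similarly the notched/plain radius pair in the punctured case forms a two-element orbit with special quiver arrows. Getting \eqref{eq:foldedexch} to transform correctly through a mutation at such an orbit, and checking the Cartan counterpart comes out with the correct (non-simply-laced) off-diagonal entry $-2$ exactly at the doubled arrow, is where the argument needs genuine care; everything else is a formal transport of structure along the folding.
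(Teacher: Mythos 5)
The paper does not prove Theorem \ref{BCtriangulation} at all --- it is imported verbatim from \cite[Section 5.5]{book} --- so there is no internal argument to compare against; your folding strategy is exactly the one used in the cited source (its Chapter 4 develops folding in general and Chapter 5 applies it to these two surfaces). That said, as a standalone proof your sketch has two genuine soft spots. First, the identification $a_{[\gamma]}:=a_\gamma$ with $a_\gamma=a_{g\gamma}$ cannot happen \emph{inside} the ambient type $A_{2n-1}$ (not ``$C_{2n+2-3}$'' --- the unfolded polygon pattern is type $A$) or $D_{n+1}$ seed pattern: by definition its cluster variables are algebraically independent, so $a_\gamma\neq a_{g\gamma}$ there. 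The folded pattern must be built in a smaller ambient field by \emph{specializing} the unfolded variables along $a_\gamma\mapsto a_{[\gamma]}$, and you then owe two verifications: that this specialization is consistent across the entire pattern (i.e.\ $a_\gamma$ and $a_{g\gamma}$ always specialize to the same element, which is the content of global foldability), and --- for the ``unique triangulation'' clause --- that it does not collapse two distinct centrally symmetric triangulations to the same folded cluster. Your step ``distinct ambient triangulations give distinct ambient seeds, hence distinct seeds'' only proves distinctness upstairs; injectivity could a priori fail after specialization, and this needs a separate argument (e.g.\ that the folded variables $a_{[\gamma]}$ are pairwise distinct and a cluster determines its arcs).

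Second, your foldability check is incomplete. For $\mu_I(B^G)=\left(\mu_I(B)\right)^G$ one needs not only that indices in a common orbit are pairwise non-adjacent ($b_{ii'}=0$), but also the sign condition that $b_{ij}b_{i'j}\geq 0$ for all $i,i'\in I$ and all $j$; without it the sum in \eqref{eq:foldedexch} can cancel and the $b_{ij}+b_{ik}|b_{kj}|$ term of \eqref{matrixmut} does not fold. Moreover both conditions must hold at \emph{every} seed reachable by orbit-mutations, not just the initial one --- this is precisely where the surface model does real work, since for centrally symmetric triangulations one can see geometrically that two arcs in an orbit never share a triangle and that their orientations relative to a third arc agree. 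Your treatment of the fixed diameter in $\textbf{P}_{2n+2}$ (a singleton orbit) and of the notched/plain radius pair is correctly flagged but needs to be carried through these checks explicitly. With those points repaired the transport of structure from Theorem \ref{ADtriangulation} goes through as you describe.
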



\section{Dynkin type $\X$-seed patterns}

Let $\mc{S}$ be an $\X$-seed pattern of type $Z_n$ ($Z\in \{A, B, C, D\}$) over an arbitrary semifield $\PP$, and let $\X(\mc{S})$ denote the set of $\X$-variables of $\mc{S}$. Let $\textbf{P}$ be the surface whose triangulations encode the combinatorics of type $Z_n$ $\A$-seed patterns ($\textbf{P}=\textbf{P}_{n+3}$ for $Z_n=A_n$, $\textbf{P}=\textbf{P}_n^\bullet$ for $Z_n=D_n$, $\textbf{P}=\textbf{P}^{~G}_{2n+2}$ for $Z_n=C_n$, $\textbf{P}=\textbf{P}_{n+1}^{\bullet ~ G}$ for $Z_n=B_n$). In this section, we relate the $\X$-variables of $\mc{S}$ to the triangulations of $\textbf{P}$, and show a bijection between $\X(\mc{S})$ and quadrilaterals (with a choice of diagonal) of $\textbf{P}$ in the case when $\PP$ is the universal semifield. Note that in what follows, ``arc" should usually be understood to mean ``orbit of arc" if $\mc{S}$ is type $B$ or $C$. 

 First, notice that Theorems \ref{ADtriangulation} and \ref{BCtriangulation} imply that one can associate to each triangulation of $\textbf{P}$ a seed of $\mc{S}$ such that mutation of seeds corresponds to flips of arcs. Indeed, consider any $\A$-seed pattern $\mc{R}$ with $\mc{R}|_{\X}=\mc{S}$; if the triangulation $T$ corresponds to the $\A$-seed $(\textbf{a}, \textbf{x}, B)$ with arc $\gamma_k$ corresponding to $\A$-variable $a_k$, then we associate to $T$ the $\X$-seed $(\textbf{x}, B)$ and to the arc $\gamma_k$ the $\X$-variable $x_k$. We write $\Sigma_T$ to indicate this association, and write $x_{T, \gamma}$ for the $\X$-variable associated to arc $\gamma$ in $\Sigma_T$.
 
Note that \emph{a priori} two distinct triangulations may be associated to the same $\X$-seed, and an $\X$-variable may be associated to a number of different arcs (though we will see that this is not the case). Further, an arc may be associated to different $\X$-variables in different triangulations. 
 
 The next observation follows immediately from the definition of seed mutation.
 
 \begin{rmk} \label{rmk:local} Consider an $\X$-seed $(\textbf{x}, B)$, and let $B'=(b'_{ij})$ be the mutation of $B$ in direction $k$. For $j \neq k$, if $b_{jk}=0$, then mutating at $k$ will not change $x_j$. Further, $b'_{ji}=b_{ji}$ and $b'_{ij}=b_{ij}$ for all $i$, since the skew-symmetrizability of $B$ implies $b_{kj}=0$ as well. Thus, if $k_1, \dots, k_t$ are indices such that $b_{k_{s}j}=0$, then the mutation sequence $\mu_{k_t}\circ\cdots \circ \mu_{k_2} \circ \mu_{k_1}$ leaves $x_j$ unchanged.
\end{rmk}

In other words, consider $x_{T, \gamma}$, an $\X$-variable in $\Sigma_T$. Any sequence of flips of arcs \emph{not} in $q_T(\gamma) \cup \{\gamma\}$ will result in a triangulation $S$ such that $\gamma \in S$ and $x_{S, \gamma}=x_{T, \gamma}$.

\begin{prop} \label{surjection} Let $Q'=\{q_T(\gamma) \cup \{\gamma\} | ~T \text{ a triangulation of }\textbf{P}, \gamma \in T\}$ be the set of quadrilaterals (with a choice of diagonal) of $\textbf{P}$. Then the following map is a surjection: 
\begin{align*}
f:Q' &\to \X(\mc{S})\\
q_T(\gamma) \cup \{\gamma\} &\mapsto x_{T, \gamma}
\end{align*}.
\end{prop}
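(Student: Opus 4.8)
The plan is to show $f$ is well-defined and surjective by exploiting the fact, recorded just before the statement, that an $\X$-variable $x_{T,\gamma}$ is unaffected by flips of arcs outside $q_T(\gamma)\cup\{\gamma\}$. First I would check that $f$ is well-defined: if $q_T(\gamma)\cup\{\gamma\} = q_{T'}(\gamma')\cup\{\gamma'\}$ for two triangulations $T, T'$ — so in particular $\gamma = \gamma'$ as the diagonal is part of the data, and the surrounding arcs/boundary segments coincide — then I must argue $x_{T,\gamma} = x_{T',\gamma}$. The idea is that $q_T(\gamma)$ together with $\gamma$ completely determines, via $B(T)$ (or $B^G(T)$ in types $B$, $C$), which arcs are adjacent to $\gamma$ and with what multiplicities and signs; hence the column/row of the exchange matrix indexed by $\gamma$ is the same in $\Sigma_T$ and $\Sigma_{T'}$ up to the contribution of arcs adjacent to $\gamma$, which is exactly what $q_T(\gamma)$ encodes. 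Then, using Remark~\ref{rmk:local}, I would transform $T$ into a triangulation containing $q_T(\gamma)$ as a ``standard'' local configuration without changing $x_{T,\gamma}$, do the same for $T'$, and observe the two resulting $\X$-seeds agree in the relevant coordinate because the whole local picture (diagonal plus quadrilateral plus induced submatrix) coincides.

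Next, for surjectivity: take an arbitrary $x \in \X(\mc{S})$. By Theorems~\ref{ADtriangulation} and~\ref{BCtriangulation}, every seed of $\mc{S}$ is $\Sigma_T$ for some triangulation $T$ of $\textbf{P}$, so $x = x_{T,\gamma}$ for some $T$ and some arc $\gamma \in T$. But then $x = f(q_T(\gamma)\cup\{\gamma\})$ by definition of $f$, provided $q_T(\gamma)\cup\{\gamma\}$ is a legal element of $Q'$, which it is by construction. So surjectivity is essentially immediate once well-definedness is in place; the only subtlety is making sure the bookkeeping in types $B$ and $C$ — where ``arc'' means ``$G$-orbit of arc'' and one flips orbits — is handled uniformly, which the parenthetical convention in the section preamble lets me do.

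The main obstacle is the well-definedness check, i.e.\ genuinely nailing down that the pair $(q_T(\gamma), \gamma)$ determines $x_{T,\gamma}$. The delicate point is the exceptional case where $\gamma$ is a radius in a once-punctured digon: there $q_T(\gamma)$ is defined to include the two radii compatible with $\gamma$ inside the digon, and $q_T(\gamma)$ is not itself part of any triangulation, so one cannot argue purely by ``flip into a standard position.'' I would handle this by a direct local analysis: in such a configuration the exchange matrix entries between $\gamma$ and everything outside the digon are zero (the digon is ``sealed off'' by its two boundary arcs), so by Remark~\ref{rmk:local} no flip outside the digon changes $x_{T,\gamma}$, and inside the digon there is essentially one triangulation-local shape up to the data recorded in $q_T(\gamma)$. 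A secondary, more routine obstacle is verifying that flipping an arc $\delta \ne \gamma$ that lies \emph{in} $q_T(\gamma)$ but is not adjacent to $\gamma$ in the relevant triangle sense still leaves $x_{T,\gamma}$ fixed — this should again follow from Remark~\ref{rmk:local} by checking the corresponding matrix entry vanishes, but it needs the precise definition of adjacency (including the once-punctured digon clause) to be applied carefully.
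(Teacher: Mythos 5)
Your overall strategy matches the paper's: observe that well-definedness immediately gives surjectivity (every $\X$-variable is $x_{T,\gamma}$ for some triangulation $T$ and arc $\gamma\in T$ by Theorems \ref{ADtriangulation} and \ref{BCtriangulation}), and reduce well-definedness to Remark \ref{rmk:local}. But the central step is missing. Remark \ref{rmk:local} only says that flips of arcs \emph{not} in $q_T(\gamma)\cup\{\gamma\}$ leave $x_{T,\gamma}$ unchanged; to conclude $x_{T,\gamma}=x_{T',\gamma}$ you must show that $T$ and $T'$ can actually be \emph{connected} by a sequence of such flips. You assert instead that after moving each of $T$ and $T'$ to a ``standard local configuration,'' the two $\X$-seeds ``agree in the relevant coordinate because the whole local picture coincides.'' That inference is not valid: $x_{T,\gamma}$ is an element of the semifield $\PP$ determined by the entire mutation path from the initial seed, not by the column of $B(T)$ at $\gamma$ or any local combinatorial data. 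Indeed, the claim that $x_{T,\gamma}$ depends only on the local picture $(q,\gamma)$ is precisely the statement being proved, so arguing from ``same local picture, hence same coordinate'' is circular. (If instead you mean to move both $T$ and $T'$ to one and the same global triangulation, then the thing you must prove is exactly that the set of triangulations containing $q\cup\{\gamma\}$ is connected under flips avoiding $q\cup\{\gamma\}$ --- and your proposal contains no argument for this.)

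The paper supplies this connectivity as follows: removing the interior of $q$ from $\textbf{P}$ yields a surface $\textbf{P}'$ whose components are polygons, once-punctured polygons, or unions of these meeting at vertices; triangulations of $\textbf{P}$ containing $q\cup\{\gamma\}$ biject with triangulations of $\textbf{P}'$, the bijection respects flips, and the flip graph of $\textbf{P}'$ is connected. Some form of this cutting argument (or an appeal to \cite[Proposition 7.10]{FST}, as the paper notes) is what you need to add. On the positive side, your attention to the once-punctured-digon case is well placed: there $q_T(\gamma)$ is not contained in $T$, and your observation that the second radius in the digon is not adjacent to $\gamma$ (so its flip is covered by Remark \ref{rmk:local}) is the right way to handle the two possible local completions. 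But that local analysis does not substitute for the global connectivity statement.
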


We remark that this proposition in fact holds in the generality of $\X$-seed patterns from marked surfaces, by the discussion following Proposition 9.2 in \cite{FST}. If one considers a collection $A$ of compatible tagged arcs of a marked surface $(S, M)$, the simplicial complex of tagged arcs compatible with $A$ is the tagged arc complex of another (possibly disconnected) surface $(S', M')$, so its dual graph (that is, the flip graph of $(S', M')$) is connected by \cite[Proposition 7.10]{FST}. This implies that any two triangulations containing $A$ are connected by a series of flips of arcs not in $A$. However, Proposition \ref{surjection} is not hard to see directly in this case, so we provide a proof here.

\begin{proof} If $f$ is well-defined, it is clearly surjective. By Remark \ref{rmk:local}, to show $f$ is well-defined, it suffices to show that all triangulations $T$ with $\gamma \in T$ and $q_T(\gamma)=q$ can be obtained from one another by flipping arcs not in $q \cup \{\gamma\}$.

Observe that removing the interior of $q$ from $\textbf{P}$ gives rise to a new surface $\textbf{P}'$, whose connected components are polygons, once-punctured polygons, or unions of several of these that intersect only at vertices. The triangulations of $\textbf{P}'$ do not include the arcs in $q$, and the flip graph of $\textbf{P}'$ is connected. There is a bijection from triangulations $T'$ of $\textbf{P}'$ to triangulations of $\textbf{P}$ containing $q \cup \{\gamma\}$: $T' \mapsto T' \cup q \cup \{\gamma\}$ (up to changing the tagging of radii in $T$). This bijection respects flips, so this shows the desired result.
\end{proof}

We have the immediate corollary:

\begin{cor} Let $\mf{q}(\textbf{P})$ denote the number of quadrilaterals of $\textbf{P}$. Then $|\X(\mc{S})|\leq 2\mf{q}(P)$.
\end{cor}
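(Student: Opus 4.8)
The plan is to read off the bound directly from Proposition~\ref{surjection}. That proposition produces a surjection $f\colon Q' \to \X(\mc{S})$, where $Q'$ is the set of quadrilaterals of $\textbf{P}$ together with a choice of diagonal. Since a surjection from a finite set can only decrease (or preserve) cardinality, it suffices to show that $|Q'| = 2\,\mf{q}(\textbf{P})$, where $\mf{q}(\textbf{P})$ counts quadrilaterals of $\textbf{P}$ without a marked diagonal. The chain of inequalities $|\X(\mc{S})| \le |Q'| = 2\,\mf{q}(\textbf{P})$ then gives the claim.

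So the one substantive point is that every quadrilateral of $\textbf{P}$ admits exactly two choices of diagonal. For a quadrilateral $q = q_T(\gamma)$ arising from an arc $\gamma$ that is not a radius in a once-punctured digon, this is the standard fact that the configuration cut out by the four sides (and possibly an enclosed puncture) is flippable, and $\gamma$ together with its flip $\gamma'$ are precisely the two arcs that serve as diagonal; neither of the two other "non-diagonal" roles can be occupied by a diagonal since those arcs are sides. For the exceptional case where $\gamma$ is a radius in a once-punctured digon, $q_T(\gamma)$ consists of the two sides of the digon, the adjacent boundary segments or arcs, and the two radii compatible with $\gamma$; here the two admissible diagonals are the plain and notched versions of the same radius, again exactly two. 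In all cases $q$ determines an unordered pair of diagonals, so the fibers of the forgetful map $Q' \to \{\text{quadrilaterals of }\textbf{P}\}$ all have size $2$, whence $|Q'| = 2\,\mf{q}(\textbf{P})$.

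I do not anticipate a genuine obstacle here: the corollary is formal once Proposition~\ref{surjection} is in hand, and the only thing requiring care is bookkeeping the two-element fiber uniformly across the ordinary and once-punctured-digon cases. If one wished to be fully rigorous about the exceptional quadrilaterals, one could cite the flip structure on tagged triangulations (the proposition of \cite{FST} quoted just before Proposition~\ref{surjection}), which guarantees that each arc $\gamma$ in a tagged triangulation has a unique flip $\gamma'$, and that $\gamma,\gamma'$ are the two diagonals of $q_T(\gamma)$; together with the convention defining $q_T(\gamma)$ for a radius in a once-punctured digon, this pins down the count.
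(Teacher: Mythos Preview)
Your argument is correct and is exactly what the paper intends: the corollary is stated there as an immediate consequence of Proposition~\ref{surjection}, the implicit step being that each quadrilateral has precisely two diagonals (which, as you note, is guaranteed by the flip proposition from \cite{FST}). One small slip in your description of the once-punctured digon case: the two diagonals of such a quadrilateral are not the plain and notched versions of the \emph{same} radius but rather two incompatible radii to the two \emph{different} vertices of the digon (e.g.\ the plain radius to $i$ and the notched radius to $j$); this does not affect the count of two, which your citation of the flip proposition already secures.
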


The above statements hold regardless of the choice of $\PP$ and initial $\X$-cluster. However, $|\X(\mc{S})|$ can vary for different choices of $\PP$ and the $\X$-cluster of a fixed seed, as the following example shows.

\begin{ex}\label{ex:degenerateseed} Let $B$ be the following matrix with Cartan counterpart of type $A_3$:

\[ \begin{bmatrix} 0 & 1 & 0\\
-1 & 0& 1\\
0 & -1 & 0\\
\end{bmatrix}.
\] 
 
If $\PP$ is any tropical semifield, then the $\X$-seed pattern $\mc{S}((1, 1, 1), B)$ in $\PP$ has a single $\X$-variable, which is equal to 1. 

If $\PP=\QQ_{sf}(x_1, x_2, x_3)$, the $\X$-seed pattern $\mc{S}_1:=\mc{S}((1/x_2, x_1/x_3, x_2), B)$ in $\PP$ has fewer $\X$-variables than $\mc{S}_2:=\mc{S}((x_1, x_2, x_3), B)$. For example, in $\mc{S}_1$, we have

\[\left( \left (\frac{1}{x_2},\frac{x_1}{x_3}, x_2 \right ), B\right) \xrightarrow{\mu_1} \left (\left(x_2, \frac{x_1}{x_3(1+x_2)}, x_2\right), \mu_1(B)\right).
\]

Note that $x_2$ appears twice in $\mu_1((1/x_2, x_1/x_3, x_2), B)$. 

In $\mc{S}_2$, we have

\[\left( \left (x_1,x_2, x_3 \right ), B\right) \xrightarrow{\mu_1} \left (\left(\frac{1}{x_1}, \frac{x_1x_2}{(1+x_1)}, x_3\right), \mu_1(B)\right).
\]

So $|\X(\mc{S}_1)|\leq |\X(\mc{S}_2)|-1$.
 Using a computer algebra system, one can check that $|\X(\mc{S}_1)|= 18$, while $|\X(\mc{S}_2)|=30$.
\end{ex}

If we fix an exchange matrix $B$ and allow $\PP$ and the $\X$-cluster of the seed $(\textbf{x}, B)$ to vary, $\mc{S}(\textbf{x}, B)$ will have the largest number of $\X$-variables when $\PP=\QQ_{sf}(t_1, \dots, t_n)$ and $\textbf{x}$ consists of elements that are algebraically independent over $\QQ(t_1, \dots, t_n)$. Indeed, let $\mc{S}_{sf}$ be such a seed pattern, and $\mc{S}$ an arbitrary seed pattern in a semifield $\PP$ containing the exchange matrix $B$. The $\X$-variables of $\mc{S}$ can be obtained from the $\X$-variables of $\mc{S}_{sf}$ by replacing ``$+$" with ``$\oplus$" and evaluating at the appropriate elements of $\PP$, so we have $|\X(\mc{S})| \leq |\X(\mc{S}_{sf})|$. As can be seen in Example \ref{ex:degenerateseed}, the assumption that the elements of $\textbf{x}$ are algebraically independent is necessary. Note also that if the elements of $\textbf{x}$ are algebraically independent, so are the elements of an arbitrary $\X$-seed in $\mc{S}_{sf}$, since $\X$-seed mutation in this case simply has the effect of multiplying the $\X$-variables by rational functions.

In light of this observation, we now focus on $\X(\mc{S}_{sf})$. To show that the surjection $f$ of Proposition \ref{surjection} is a bijection for $\mc{S}_{sf}$, it suffices to show that $f$ is injective for some $\X$-seed pattern of type $Z_n$. To do this, we use specific examples of $\A$-seed patterns of type $Z_n$ given in \cite[Chapter 5]{book} 
and \cite[Section 12.3]{FZ2},
 which we denote by $\mc{R}(Z_n)$. These seed patterns are over a tropical semifield $\PP$; the $\A$-variables and the generators of $\PP$ are SL$_2$- or SO$_2$-invariant polynomials in the entries of a $2 \times m$ matrix.

\begin{prop} \label{prop:injection} Consider the $\A$-seed pattern $\mc{R}=\mc{R}(Z_n)$. Then the surjection $f:Q' \to \X(\hat{\mc{R}})$ of Proposition \ref{surjection} is injective.
\end{prop}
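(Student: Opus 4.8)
The plan is to exhibit, for the explicit seed patterns $\mc{R}(Z_n)$ coming from $\mathrm{SL}_2$- or $\mathrm{SO}_2$-invariant polynomials on a $2 \times m$ matrix, a concrete formula for each $\X$-variable $\hat x_{T,\gamma}$ as an honest rational function in the matrix entries (or in the Plücker-type coordinates $t_i$), and then to verify that distinct elements of $Q'$ produce distinct such rational functions. Concretely, recall from Proposition 3.9 (quoted in the excerpt) that $\hat x_j = x_j \prod_i a_i^{b_{ij}}$, and that the numerator and denominator of $\hat x_{T,\gamma}$ are, up to a monomial in the frozen variables, exactly the two terms of the exchange relation for $a_\gamma$ in the seed $\Sigma_T$. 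Since in $\mc{R}(Z_n)$ the $\A$-variables are indexed by arcs $\delta$ of $\textbf{P}$ via $\delta \mapsto a_\delta$ (Theorems \ref{ADtriangulation}, \ref{BCtriangulation}), and the exchange relation for flipping $\gamma$ involves precisely the four arcs/boundary segments of $q_T(\gamma)$ (a Ptolemy-type relation), I expect to show that
\[
\hat x_{T,\gamma} = \frac{a_\alpha a_\delta}{a_\beta a_\epsilon}
\]
(up to the monomial in frozen variables, and with appropriate modifications for radii and for folded $B,C$ types), where $\{\alpha,\beta,\delta,\epsilon\}$ are the sides of $q_T(\gamma)$ in cyclic order, $\gamma$ being the diagonal joining the $\alpha\beta$-corner to the $\delta\epsilon$-corner. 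The first step is therefore to pin down this formula carefully from the exchange relation \eqref{exchangerel} and the definition of $\hat x$.

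Next I would record how $q_T(\gamma) \cup \{\gamma\}$, as an element of $Q'$, is determined by the unordered pair $\{\alpha,\delta\}$ and $\{\beta,\epsilon\}$ of opposite sides together with the diagonal $\gamma$ (equivalently, by the cyclically-ordered quadruple up to the dihedral symmetry that fixes $\gamma$), and then argue that the rational function $\frac{a_\alpha a_\delta}{a_\beta a_\epsilon}$ remembers exactly this data. Here is where the explicit nature of $\mc{R}(Z_n)$ is essential: the $a_\delta$ are specific, distinct, irreducible (or products of known irreducible) polynomials — e.g. $2\times 2$ minors $P_{ij}$ of the matrix for type $A$, with the $D$ and $B/C$ cases using the puncture/orbit conventions — so the map $\delta \mapsto a_\delta$ is injective and the $a_\delta$ are multiplicatively independent in the appropriate sense. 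Thus from $\frac{a_\alpha a_\delta}{a_\beta a_\epsilon}$ (even after clearing the frozen-variable monomial) one can read off $\{\alpha,\delta\}$ as the ``numerator arcs'' and $\{\beta,\epsilon\}$ as the ``denominator arcs'', hence recover the quadrilateral; and the choice of diagonal is then forced because $\gamma$ is the unique arc separating the numerator pair from the denominator pair inside that quadrilateral. Combined with the surjectivity of $f$ from Proposition \ref{surjection}, injectivity gives that $f$ is a bijection for $\mc{R}(Z_n)$, which is what is claimed; the passage to a general $\mc{S}_{sf}$ of type $Z_n$ is the remark already made in the excerpt (it suffices to check injectivity for one seed pattern of the type).

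The main obstacle I anticipate is the bookkeeping around radii, tagged arcs, and once-punctured digons in type $D$ (and their folded counterparts in type $B$), where the ``quadrilateral'' $q_T(\gamma)$ can involve a third arc and the exchange relation is not the naive Ptolemy relation — so the clean formula $\frac{a_\alpha a_\delta}{a_\beta a_\epsilon}$ must be replaced by the correct variant, and I must check that this variant still uniquely encodes the element of $Q'$. A secondary, more technical point is verifying the multiplicative-independence / unique-factorization statement for the specific invariant polynomials appearing in $\mc{R}(Z_n)$ — i.e. that no two of the $a_\delta$ are equal or associate, and that a ratio of squarefree monomials in them determines the monomials — which should follow from the known explicit descriptions in \cite{FZ2, book} but needs to be stated precisely. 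The rest — extracting the exchange-relation terms, matching them to the four sides of $q_T(\gamma)$, and assembling the injectivity argument — I expect to be routine given Theorems \ref{ADtriangulation} and \ref{BCtriangulation}.
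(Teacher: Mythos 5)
Your first step coincides with the paper's: for the explicit seed patterns $\mc{R}(Z_n)$ the proof computes $\hat{x}_{q,\gamma}$ as a ratio of (modified) Pl\"ucker coordinates attached to the four sides of $q_T(\gamma)$, which is exactly the Ptolemy-type formula you predict, together with the variants for radii, once-punctured digons, and the folded $B/C$ orbits that you flag. Where you diverge is the distinguishing step. The paper does not argue by unique factorization; it separates any two candidate $\X$-variables by evaluating at explicitly chosen matrices $z$, arranged so that one expression is nonzero while the other is zero or undefined. Your route --- recover the numerator arcs and denominator arcs from the factorization of the rational function --- is fine in type $A$, where the $P_{ij}$ are visibly distinct irreducibles in a polynomial ring, but in types $B$, $C$, $D$ it rests entirely on the deferred claim that the modified coordinates such as $P_{i\overline{j}}=P_{i,n+2}P_{j,n+2}-P_{ij}$, $P_{ia}$, $P_{ia^{\bowtie}}$ are irreducible and pairwise non-associate. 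That claim is true but is not available off the shelf in the form you need (note $P_{i\overline{j}}$ is inhomogeneous), so it is a genuine piece of work your plan omits; the paper's evaluation argument avoids it altogether. Also, several of the actual formulas are not of the squarefree shape $a_\alpha a_\delta/a_\beta a_\epsilon$ (e.g. $P_{ij}^2/P_{i\overline{j}}^2$ and expressions containing $P_{k,n+2}^2$ or $P_{ka}P_{ka^{\bowtie}}$), so the ``read off the opposite sides, then the diagonal is forced'' step has to be redone case by case rather than once.

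One place where your recipe as stated actually fails is type $D$. For a once-punctured digon on vertices $i,j$, the two elements of $Q'$ given by choosing the plain versus the notched radius as diagonal have $\X$-variables $\overline{\lambda}P_{ij}/P_{i\overline{j}}$ and $\lambda P_{ij}/P_{i\overline{j}}$, where the paper sets $\lambda=1$ and $\overline{\lambda}=-1$. These two rational functions have identical factorizations into irreducibles and differ only by the unit $-1$, so ``the ratio remembers the numerator and denominator arcs'' cannot distinguish them: you must additionally track the scalar coming from the $\lambda,\overline{\lambda}$ decoration of the quiver. The two functions are still distinct (they differ by sign and are not identically zero), so injectivity survives, but not by the mechanism you describe; this is precisely the case your ``bookkeeping around radii and once-punctured digons'' worry points at, and it needs the extra sign argument (or the paper's direct evaluation) to close.
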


We delay the description of $\mc{R}(Z_n)$ and the proof of this proposition to Section \ref{sec:geocontruct}.

Theorem \ref{quadbijection} is an immediate corollary of Proposition \ref{prop:injection}, as is the number of $\X$-variables in $\mc{S}_{sf}$.

\begin{cor} $|\X(\mc{S}_{sf})|= 2\mf{q}(\textbf{P})$.
\end{cor}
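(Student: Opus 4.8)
The goal is to prove that $|\X(\mc{S}_{sf})| = 2\mf{q}(\textbf{P})$, where $\mc{S}_{sf}$ is an $\X$-seed pattern of type $Z_n$ over the universal semifield whose initial $\X$-cluster is algebraically independent, and $\mf{q}(\textbf{P})$ is the number of quadrilaterals of the associated marked polygon $\textbf{P}$.

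The plan is to deduce this immediately from the preceding machinery. By Proposition \ref{surjection}, for any $\X$-seed pattern $\mc{S}$ of type $Z_n$ there is a surjection $f: Q' \to \X(\mc{S})$, where $Q'$ is the set of quadrilaterals-with-a-choice-of-diagonal of $\textbf{P}$; since each of the $\mf{q}(\textbf{P})$ quadrilaterals has exactly two diagonals, $|Q'| = 2\mf{q}(\textbf{P})$, so $|\X(\mc{S})| \le 2\mf{q}(\textbf{P})$ always. It therefore suffices to exhibit one type $Z_n$ $\X$-seed pattern for which $f$ is a bijection, i.e. injective. Proposition \ref{prop:injection} provides exactly this: the $\X$-seed pattern $\hat{\mc{R}}$ arising from the geometric $\A$-seed pattern $\mc{R}(Z_n)$ has $f$ injective, hence $|\X(\hat{\mc{R}})| = 2\mf{q}(\textbf{P})$.

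The remaining step is to transfer this count from $\hat{\mc{R}}$ to $\mc{S}_{sf}$. Here I would invoke the observation made in the paragraph preceding the focus on $\X(\mc{S}_{sf})$: among all $\X$-seed patterns sharing the exchange matrix $B$ of $\mc{R}(Z_n)$, the one over the universal semifield with an algebraically independent initial $\X$-cluster has the maximal number of $\X$-variables, since every $\X$-variable of any other such pattern is obtained from a corresponding $\X$-variable of $\mc{S}_{sf}$ by replacing $+$ with $\oplus$ and evaluating. In particular $|\X(\hat{\mc{R}})| \le |\X(\mc{S}_{sf})|$, giving $2\mf{q}(\textbf{P}) \le |\X(\mc{S}_{sf})|$. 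Combined with the upper bound $|\X(\mc{S}_{sf})| \le 2\mf{q}(\textbf{P})$ from the corollary to Proposition \ref{surjection}, this forces equality. (One should note that $\mc{S}_{sf}$ is well-defined up to relabeling independently of which type $Z_n$ exchange matrix and which algebraically independent cluster one starts from, because by Theorem \ref{ADtriangulation}/\ref{BCtriangulation} all type $Z_n$ exchange matrices are mutation equivalent, and $\X$-seed mutation over the universal semifield only multiplies $\X$-variables by subtraction-free rational functions, preserving algebraic independence as remarked in the text.)

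The only real content is thus entirely outsourced: everything rests on Proposition \ref{prop:injection}, whose proof is deferred to Section \ref{sec:geocontruct}. So from the vantage point of this corollary there is no genuine obstacle — the argument is a two-line sandwich between the surjection of Proposition \ref{surjection} and the injection of Proposition \ref{prop:injection}, together with the monotonicity of $|\X(\cdot)|$ in the semifield. If I were to flag where care is needed, it is in making sure the upper and lower bounds refer to the same number $2\mf{q}(\textbf{P})$: the upper bound comes from $\mc{S}_{sf}$ directly (any type $Z_n$ pattern), while the lower bound comes through the specific pattern $\hat{\mc{R}}$, and these agree precisely because $\hat{\mc{R}}$ is itself a type $Z_n$ $\X$-seed pattern attached to the same surface $\textbf{P}$.
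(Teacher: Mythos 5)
Your argument is correct and is essentially the paper's own: the paper likewise obtains the count as an immediate consequence of Proposition \ref{prop:injection}, noting that it suffices to verify injectivity of the surjection $f$ for the single geometric pattern $\hat{\mc{R}}$ and then transfer to $\mc{S}_{sf}$ via the maximality of $|\X(\mc{S}_{sf})|$ among patterns with the same exchange matrices. Your ``sandwich'' phrasing just makes explicit the two inequalities the paper leaves implicit.
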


\subsection{Quadrilateral counts}

We label the vertices of $\textbf{P} \in \{\textbf{P}_n, \textbf{P}_n^\bullet, \textbf{P}_n^{~ G}, \textbf{P}_n^{\bullet G}\}$ clockwise with $1, \dots, n$.

\begin{prop}\label{quadnumber} The number of quadrilaterals of each surface $\textbf{P}$ are listed in the following table.

\begin{center}
\begin{tabular}{| c | c | c| c | }
 \hline
   $\textbf{P}$ & $\textbf{P}_{n+3}$ & $\textbf{P}_{2n+2}^{~G}, \textbf{P}_{n+1}^{\bullet G}$ & $\textbf{P}_{n}^\bullet$  \\ \hline
  $\mf{q}(\textbf{P})$ & $\binom{n+3}{4}$ & $\frac{1}{6}n(n+1)(n^2+2)$ & $\frac{1}{6}n(n-1)(n^2+4n-6)$ \\ \hline
\end{tabular}
\end{center}
\end{prop}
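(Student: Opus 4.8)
The plan is to compute $\mf{q}(\textbf{P})$ for each of the four surfaces by directly enumerating its quadrilaterals, organized by combinatorial type, using throughout that a quadrilateral-with-diagonal is the union of its two triangles along their common diagonal and so is determined by the cyclic list of its four corners together with the data of how it meets the puncture (if any). For $\textbf{P}=\textbf{P}_{n+3}$ there is essentially nothing to do: a quadrilateral has four distinct boundary vertices as corners, is determined by them, and every $4$-subset of the $n+3$ vertices is realized inside some triangulation (triangulate the two triangles and extend arbitrarily). Hence $\mf{q}(\textbf{P}_{n+3})=\binom{n+3}{4}$, giving the first column.

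For $\textbf{P}=\textbf{P}^G_{2n+2}$ (type $C$) I would pass to the unfolded polygon $\textbf{P}_{2n+2}$ and count $G$-orbits. A quadrilateral of $\textbf{P}^G$ is an orbit of arcs $[\gamma]$ together with its quadrilateral $q_T([\gamma])$. If $g\gamma=\gamma$ then $\gamma$ is a diameter, $q_T([\gamma])$ is a single quadrilateral of $\textbf{P}_{2n+2}$, and since the flip of a $G$-fixed arc is again $G$-fixed its other diagonal is also a diameter; such quadrilaterals are indexed by unordered pairs of antipodal vertex-pairs, contributing $\binom{n+1}{2}$. If $g\gamma\neq\gamma$ then $q_T([\gamma])=q_T(\gamma)\cup q_T(g\gamma)$ is the union of a quadrilateral $q$ of $\textbf{P}_{2n+2}$ (whose two diagonals are then automatically non-diameters) with its $G$-translate, and two such $q$ give the same quadrilateral of $\textbf{P}^G$ exactly when they are $G$-translates; a short argument shows $q$ and $gq$ have disjoint interiors precisely when $q$ does not strictly contain the center, so this case contributes $\tfrac12(2n+2)\binom{n+1}{3}$, i.e.\ half the number of $4$-subsets of the $2n+2$ vertices not enclosing the center. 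Adding, $\mf{q}(\textbf{P}^G_{2n+2})=\binom{n+1}{2}+\tfrac12(2n+2)\binom{n+1}{3}=\tfrac16 n(n+1)(n^2+2)$. For $\textbf{P}=\textbf{P}^{\bullet G}_{n+1}$ (type $B$) I would run the analogous orbit count with unfolded surface $\textbf{P}_{n+1}^\bullet$ and the tagging-swap action; the bookkeeping is fussier because of radii and tagged once-punctured-digon configurations, but it must produce the same closed form.

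For $\textbf{P}=\textbf{P}_n^\bullet$ (type $D$) I would stratify the quadrilaterals according to how they meet the puncture $p$: (a) those disjoint from $p$, which lie in an unpunctured subpolygon and are recorded by a $4$-subset of boundary vertices whose spanned quadrilateral excludes $p$; (b) those having $p$ as a corner, for which two opposite sides are radii (plain or notched) and the remaining data is the other two corners, the two non-radius sides, and a tagging; and (c) the once-punctured-digon quadrilaterals from the definition of $q_T(\gamma)$, recorded by the two boundary endpoints of the digon and a tagging. For (a) I would count $4$-subsets of the $n$ vertices by their cyclic vector of gaps, imposing that some gap exceed $n/2$ so that $p$ falls outside; (b) and (c) give elementary (linear/quadratic) contributions. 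Summing and simplifying should yield $\tfrac16 n(n-1)(n^2+4n-6)$; I would sanity-check against $n=3$, where $D_3=A_3$ forces $\mf q=\binom64=15$, and $n=4$, where it should equal $52$ (half of the known $|\X(\mc{S}_{sf})|=104$ for $D_4$).

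The main obstacle is the type-$D$ enumeration (and, similarly, the type-$B$ one): correctly listing all combinatorial types of quadrilaterals of a once-punctured polygon---especially those incident to the puncture and the tagged once-punctured-digon ones---and, within stratum (a), correctly imposing ``$p$ lies outside'' via a small inclusion--exclusion on gap vectors, together with the parallel ``$q$ coexists with its $G$-translate'' condition in the folded cases. Once the list of types is pinned down, each individual count is an elementary binomial sum, and the stated closed forms follow from routine polynomial identities, which I would verify symbolically.
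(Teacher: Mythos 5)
Your type $A$ count matches the paper's, and your type $C$ argument is sound and in fact more direct than the paper's: where you enumerate the $4$-subsets contained in a closed half-disk directly as $(2n+2)\binom{n+1}{3}$, the paper establishes $|Q_1|=\frac12\bigl(\binom{2n+2}{4}-\binom{n+1}{2}\bigr)$ via an explicit vertex-flipping bijection between $Q_1$ and its complement; the two counts agree, and your observation that a centrally symmetric quadrilateral-pair exists exactly when neither copy strictly contains the center is equivalent to the paper's "contained in a closed $\delta_i$-half-disk" criterion. However, your type $D$ plan (and hence the type $B$ count, which you defer entirely to "analogous bookkeeping" and never carry out) has two genuine gaps.

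First, stratum (a) is set up incorrectly. In $\textbf{P}_n^\bullet$ there are exactly two isotopy classes of arc between any pair of boundary vertices (one on each side of the puncture), so a puncture-avoiding quadrilateral is \emph{not} determined by its $4$-set of corners, and the position of $p$ is topological rather than metric: \emph{every} $4$-subset of the $n$ vertices supports puncture-avoiding quadrilaterals, in fact exactly $4$ of them (the paper sees this by introducing a cut from $p$ to the boundary and checking that either no side crosses the cut, or the outer side $il$ together with exactly one other side does). Your proposed condition "some gap exceeds $n/2$ so that $p$ falls outside" treats $p$ as the Euclidean center and arcs as chords; it returns at most $\binom{n}{4}$ quadrilaterals in this stratum instead of the correct $4\binom{n}{4}$, and for instance assigns zero quadrilaterals to the vertex set $\{1,3,5,7\}$ in $\textbf{P}_8^\bullet$. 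Second, your stratification omits an entire class: quadrilaterals with three boundary corners that \emph{do} enclose the puncture and whose two diagonals are both arcs between boundary vertices; by the paper's adjacency convention for once-punctured digons, the side of such a quadrilateral facing $p$ consists of the notched and plain versions of the same radius. These are neither disjoint from $p$ (your (a)), nor do they have $p$ as a corner with radius sides and a radius diagonal (your (b)), nor are they the digon quadrilaterals with radius diagonals (your (c)). They contribute $3\binom{n}{3}$, a cubic term, so the omission cannot be absorbed into the other strata; your own sanity check at $n=4$ (which should give $52$) would detect the discrepancy, but as written the plan does not produce a proof of the $D_n$ or $B_n$ entries.
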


For $\textbf{P} \neq \textbf{P}_{2n+2}^{~G}$, $\mf{q}(\textbf{P})$ follow from a fairly straightforward inspection of the triangulations of the appropriate surfaces. It is clear that $\mf{q}(\textbf{P}_{n+3})=\binom{n+3}{4}$, as quadrilaterals in a polygon are uniquely determined by their vertices.

\begin{figure} 
\centering
\includegraphics[height=.4\textheight]{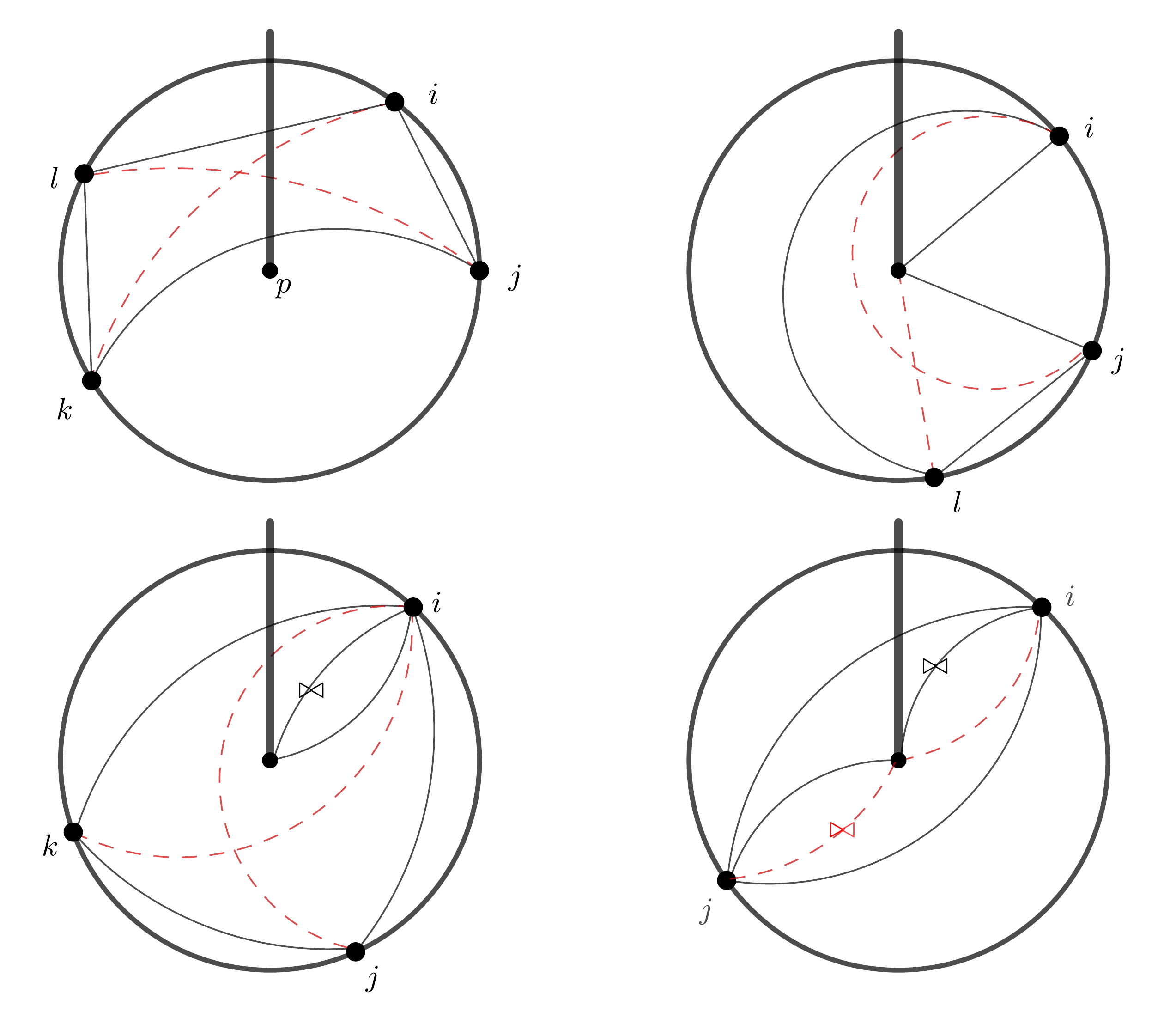}
\caption{\label{fig:quadex} The 4 types of quadrilaterals in a triangulation of $\textbf{P}_n^\bullet$, listed in clockwise order from the upper left. The arcs of the quadrilateral are solid; diagonals are dashed. The cut, the segment from $p$ to the boundary arc between vertices $1$ and $n$, is shown in bold.}
\end{figure}

\begin{prop}$\mf{q}(\textbf{P}_n^\bullet)=4 \binom{n}{4}+9\binom{n}{3}+2\binom{n}{2}=\frac{1}{6}n(n-1)(n^2+4n-6)$
\end{prop}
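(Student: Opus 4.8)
## Proof Proposal

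The plan is to classify quadrilaterals in tagged triangulations of $\textbf{P}_n^\bullet$ according to how many of their four sides are radii (arcs incident to the puncture $p$), and to count each class by choosing boundary vertices. Inspecting Figure \ref{fig:quadex}, a quadrilateral either (a) avoids the puncture entirely and sits in the ``boundary region'' as an ordinary quadrilateral, (b) has exactly one pair of opposite sides being radii — more precisely, the quadrilateral fans out from $p$ with two of its sides radii to two boundary vertices and two of its sides boundary/ordinary arcs — (c) is a ``once-punctured triangle'' type where three consecutive sides meet at the puncture, or (d) degenerates further. The four pictures in Figure \ref{fig:quadex} are exactly these four types, and I would match the counting terms $4\binom{n}{4}$, $9\binom{n}{3}$, $2\binom{n}{2}$ to them.

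First I would fix the \emph{cut}: the segment from $p$ to the boundary arc between vertices $1$ and $n$ (as in Figure \ref{fig:quadex}), which lets me ``unfold'' the once-punctured polygon and linearly order the marked points around $p$; this removes the ambiguity coming from plain vs.\ notched taggings and from radii winding around the puncture. With the cut in place, each quadrilateral type is determined by an unordered set of boundary vertices together with a bounded amount of extra combinatorial data (which vertex is incident to the puncture, whether a side is a boundary segment or an ordinary arc, the tagging). Concretely: the quadrilaterals using $4$ boundary vertices and no radius contribute $\binom{n}{4}$ times the number of internal configurations; I expect this to give the $4\binom{n}{4}$ term (the factor $4$ from, e.g., choices of which of the four ``slots'' carries the puncture-facing structure, or taggings — this is the bookkeeping to pin down against the figure). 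Quadrilaterals determined by $3$ boundary vertices give the $9\binom{n}{3}$ term, and those determined by $2$ boundary vertices give $2\binom{n}{2}$; in each case I would enumerate the finitely many shapes in Figure \ref{fig:quadex} and the finitely many tagging/boundary-segment choices, and read off the multiplicities $4$, $9$, $2$.

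The final algebraic step is routine: expand $4\binom{n}{4}+9\binom{n}{3}+2\binom{n}{2}$, namely $\tfrac{4}{24}n(n-1)(n-2)(n-3)+\tfrac{9}{6}n(n-1)(n-2)+\tfrac{2}{2}n(n-1)$, collect over the common factor $n(n-1)$, and simplify the remaining quadratic to $\tfrac{1}{6}(n^2+4n-6)$, yielding $\tfrac{1}{6}n(n-1)(n^2+4n-6)$.

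The main obstacle is the careful case analysis in the middle step: one must be sure that the four pictures in Figure \ref{fig:quadex} genuinely exhaust all quadrilaterals $q_T(\gamma)$ (including the special case where $\gamma$ is a radius in a once-punctured digon, whose quadrilateral is \emph{not} part of any triangulation and consists of two boundary/ordinary sides plus two radii), and that the ``extra data'' multiplicities are counted without double-counting across types — in particular distinguishing a side that is a genuine arc from one that is a boundary segment, and handling the taggings of radii consistently relative to the cut. Getting the multiplicities $4$, $9$, $2$ exactly right, rather than an off-by-one coming from an overlooked degenerate configuration, is where the real work lies; everything else is mechanical.
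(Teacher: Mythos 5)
Your strategy is the same as the paper's (introduce the cut from $p$ to the boundary segment between vertices $1$ and $n$, stratify quadrilaterals by the number of boundary vertices they use, and count configurations per vertex set), but the proposal stops short of the actual content of the proof: the multiplicities $4$, $9$, $2$ are read off from the target formula rather than derived, and you explicitly defer the enumeration (``this is the bookkeeping to pin down against the figure''). That enumeration is the entire proof, so as written there is a genuine gap. Concretely, what needs to be established is: (a) for four boundary vertices $i<j<k<l$, the quadrilateral cannot enclose the puncture because its diagonals are arcs between vertices, which forces either no side to cross the cut or exactly the arc $il$ plus one other to cross it, giving $4$ configurations; (b) the coefficient $9$ is not a single count but the sum $6+3$ of two structurally different types on three vertices --- one where a diagonal is a radius (choose $2$ of the $3$ sides of the unique puncture-enclosing triangle, times $2$ for both-notched vs.\ both-plain radii, giving $6$), and one where both diagonals are arcs between vertices and the quadrilateral contains the notched and plain versions of the \emph{same} radius ($3$ choices of vertex, giving $3$); (c) for two vertices the sides are forced and only the choice of the $2$ incompatible radius pairs remains. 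Your classification by ``how many sides are radii'' does not separate the two three-vertex types (both have exactly two radius sides), so following your scheme literally would conflate them and make the factor $9$ hard to recover correctly.

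A secondary issue: your type descriptions are imprecise in ways that matter for the count. In the paper's type (iii) it is not ``three consecutive sides'' meeting the puncture but exactly two (the two taggings of one radius), and in type (ii) the two radius sides go to two distinct vertices while the radius diagonal goes to the third. The final algebraic simplification you describe is fine.
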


\begin{proof} We first introduce a ``cut" from $p$ to the boundary arc between vertices 1 and $n$ (see Figure \ref{fig:quadex}). For any two vertices $i$ and $j$, there are two distinct arcs between $i$ and $j$, one crossing the cut and the other not. For convenience, we here refer to boundary segments between adjacent vertices as ``arcs,'' though they are not arcs of any triangulation.

Let $q$ be a quadrilateral in some triangulation of $\textbf{P}_n^\bullet$. There are 4 possibilities (see Figure \ref{fig:quadex}):
\begin{enumerate}[(i)]
\item The quadrilateral $q$ consists of arcs between four vertices of $S$; the diagonals of $q$ are arcs between vertices. Then $q$ cannot enclose the puncture, since its diagonals are arcs between vertices. This places restrictions on the possible configurations of arcs. If $1 \leq i<j<k<l\leq n$ are the vertices of $q$, then either no arcs cross the cut or the arc from $i$ to $l$ and one other arc must cross the cut. This gives four quadrilaterals for each choice of four vertices.
\item The quadrilateral $q$ consists of arcs between three vertices and $p$; one diagonal is a radius and the other is an arc between vertices. Then $q$ must consist of two radii and two arcs between vertices. For any choice of three vertices $1\leq i<j<k\leq n$, there is a unique triangle on these vertices enclosing the puncture: the arc from $i$ to $k$ must cross the cut and the other two arcs must not. The quadrilateral $q$ can include any two of the arcs of this triangle. Once the two arcs are chosen, the radii in $q$ are determined. The radii must either be both notched or both unnotched, six choices in all. So there are $6\binom{n}{3}$ of these quadrilaterals.
\item The quadrilateral $q$ consists of arcs between three vertices and $p$; the diagonals are arcs between vertices. Then $q$ must enclose $p$, and the arcs involving $p$ must be the notched and unnotched version of the same radius (otherwise the quadrilateral is of type (ii)). Once the three vertices are chosen, the arcs between vertices are determined. There are three choices for the radii, so $3\binom{n}{3}$ quadrilaterals total.
\item The quadrilateral $q$ consists of arcs between two vertices and $p$; the diagonals are radii. Once the two vertices $i, j$ have been selected, the arcs between vertices are determined; they must be the two arcs between $i$ and $j$. This quadrilateral also contains 2 radii which are not compatible, for which there are 2 choices.
\end{enumerate}

\end{proof}

 \begin{prop}$ \mf{q}(\textbf{P}_{n+1}^{\bullet G})=4 \binom{n+1}{4}+3 \binom{n+1}{3}+ \binom{n+1}{2}=\frac{1}{6}n(n+1)(n^2+2)$.
 \end{prop}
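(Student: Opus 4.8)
The plan is to mimic the case-analysis used for $\textbf{P}_n^\bullet$, but to exploit the fact that a triangulation of $\textbf{P}_{n+1}^{\bullet G}$ is a centrally symmetric object: it is a triangulation of $\textbf{P}_{n+1}^\bullet$ fixed under the $G$-action that swaps the notched and plain tagging of each radius, so it necessarily contains both versions of exactly one radius, say the radius to vertex $v$. Recall that a ``quadrilateral'' in this setting is an orbit $q_T([\gamma])$, i.e. the collection of arcs and boundary segments adjacent to the arcs in the orbit $[\gamma]$. Since $G$ acts trivially on arcs between vertices and swaps the tagging on radii, the orbit $[\gamma]$ has size $1$ unless $\gamma$ is a radius, and $q_T([\gamma])$ is the $G$-orbit of the ordinary quadrilateral $q_T(\gamma)$ from the $\textbf{P}_{n+1}^\bullet$ picture (as noted in the text, with the caveat about radii in a once-punctured digon). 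Concretely: I would label the $n+1$ vertices and classify an orbit-quadrilateral $q$ by whether its diagonal $[\gamma]$ is (a) an arc between two vertices not involving the puncture at all, (b) an arc between two vertices whose quadrilateral encloses the puncture, or (c) an orbit of two tagged radii.

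First I would handle case (a): the diagonal is an arc between vertices $i<j$, and the whole quadrilateral $q_T([\gamma])=q_T(\gamma)$ lies in the unpunctured region, so it is an ordinary quadrilateral on four of the $n+1$ vertices, with one diagonal chosen. Because there is no cut to worry about in an unpunctured polygon, each set of four vertices gives exactly two such diagonals, but I must be careful: in the $G$-symmetric surface the relevant count is of \emph{orbits}, and $G$ fixes each such arc and each such quadrilateral, so this contributes $2\binom{n+1}{4}$. Case (b): the diagonal $\gamma$ is still an arc between two vertices, but now $q_T(\gamma)$ encloses the puncture; as in part (iii) of the $\textbf{P}_n^\bullet$ argument, this forces the two puncture-incident arcs of the quadrilateral to be the notched and plain versions of a single radius (to some vertex $w$), and once we pick the three relevant vertices the geometry is essentially determined up to the orbit identification — I expect this to contribute $\binom{n+1}{3}$ (the single $G$-orbit of tagged-radius pair is forced, and the arc between vertices is forced). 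The remaining $2\binom{n+1}{4}+2\binom{n+1}{3}+\binom{n+1}{2}$ must then come from case (c) together with a second family in case (b); the bookkeeping here is where I need to be most careful.

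Case (c): the diagonal orbit $[\gamma]$ consists of two tagged radii. These are exactly the orbit-quadrilaterals coming from the ``two radii'' configurations, and I would count them by the vertices touched. One sub-family is the analogue of type (iv) for $\textbf{P}_n^\bullet$ — the quadrilateral has two vertices $i<j$ and the puncture, its non-radius arcs are the two arcs between $i$ and $j$, contributing $\binom{n+1}{2}$ after passing to orbits; another sub-family corresponds to type (ii), where the quadrilateral touches three vertices, uses two radii and two vertex-arcs, and after folding the $6\binom{n}{3}$ of the unfolded count collapses by the $G$-identification of tagged radii to something on the order of $\binom{n+1}{3}$ or $2\binom{n+1}{3}$. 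I would organize all of these into a table, verify that the total telescopes to $4\binom{n+1}{4}+3\binom{n+1}{3}+\binom{n+1}{2}$, and finally check the closed form by expanding: $4\binom{n+1}{4}+3\binom{n+1}{3}+\binom{n+1}{2}=\tfrac{1}{6}(n+1)n(n-1)(n-2)+\tfrac12(n+1)n(n-1)+\tfrac12(n+1)n$, which simplifies to $\tfrac16 n(n+1)(n^2+2)$.

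The main obstacle I anticipate is \textbf{the orbit bookkeeping in cases (b) and (c)}: in the unfolded surface $\textbf{P}_{n+1}^\bullet$ several distinct tagged-radius configurations get identified under the $G$-action (notched/plain swaps), and a centrally symmetric triangulation contains \emph{both} taggings of the special radius, so I must take care not to double-count orbit-quadrilaterals and not to miss the quadrilaterals whose diagonal orbit is the pair of radii to the special vertex. Getting the multiplicities ($4$, $3$, $1$ on the three binomial terms) exactly right — rather than off by a constant absorbed into lower-order terms — is the crux; once the correct per-vertex-subset multiplicities are pinned down, the algebraic simplification to $\tfrac13 n(n+1)(n^2+2)\cdot\tfrac12$ is routine.
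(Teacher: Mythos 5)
Your overall plan---adapting the four-type case analysis from $\textbf{P}_{n+1}^\bullet$ to the folded surface---is exactly the right one and matches the paper's, but the execution has genuine errors. The most serious is in your case (a): you assert that ``there is no cut to worry about in an unpunctured polygon'' and conclude a contribution of $2\binom{n+1}{4}$. But $\textbf{P}_{n+1}^{\bullet G}$ is still a once-punctured polygon; the $G$-action only swaps the taggings of radii, it does not remove the puncture. So between any two vertices there are still two isotopy classes of arcs, and exactly as in type (i) of the $\textbf{P}_n^\bullet$ count, each choice of four vertices yields $4$ quadrilaterals (the puncture cannot be enclosed, which forces either no arcs or the ``outer'' arc plus one other to cross the cut), giving $4\binom{n+1}{4}$. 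Note also that $\mf{q}$ counts quadrilaterals, not quadrilaterals with a chosen diagonal, so your factor of two for the diagonal choice should not enter at all.

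The second missing ingredient is the structural fact that makes the folded count work: quadrilaterals of type (ii)---one radius diagonal and one vertex-arc diagonal, hence containing two compatible radii to \emph{distinct} vertices with the same tagging---cannot occur in any triangulation of $\textbf{P}_{n+1}^{\bullet G}$, because such a triangulation contains exactly the notched and plain radii to a single vertex and no other radii. So the $6\binom{n}{3}$ family contributes $0$, not ``something on the order of $\binom{n+1}{3}$ or $2\binom{n+1}{3}$'' as you guess. Third, type (iii) still contributes $3\binom{n+1}{3}$ (three choices of which vertex carries the notched/plain radius pair), not $\binom{n+1}{3}$. With these corrections the total is $4\binom{n+1}{4}+0+3\binom{n+1}{3}+\binom{n+1}{2}$, where the last term is your (correct) observation that type (iv) quadrilaterals are determined by the two vertices alone. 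As written, your argument explicitly works backwards from the target formula to apportion the remaining terms, so it does not yet constitute a proof.
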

 
 \begin{proof}
Recall that the triangulations of $\textbf{P}_{n+1}^{\bullet G}$ are triangulations of $\textbf{P}_{n+1}^\bullet$ using the notched and unnotched versions of the same radius; these two arcs are identified. To count quadrilaterals, we modify our count of quadrilaterals of $\textbf{P}_{n+1}^\bullet$. In triangulations of $\textbf{P}_{n+1}^{\bullet G}$, there are no quadrilaterals of type (ii), and there are fewer quadrilaterals of type (iv). In particular, these quadrilaterals are completely determined by a choice of two vertices. There are the same number of quadrilaterals of types (i) and (iii).
 
 \end{proof}

We now consider $\textbf{P}=\textbf{P}_{2n+2}^{~G}$. For $i \in \{1, \dots, 2n+2\}$, let $i':=i+n+1$ (considered mod $2n+2$).

\begin{defn}\label{defn:halfdisk} A \emph{diameter} of $\textbf{P}_{2n+2}^{~G}$ is an arc from vertex $i$ to vertex $i'$, denoted by $\delta_i$. The (closures of) half-disks resulting from removing $\delta_i$ from $\textbf{P}_{2n+2}^{~G}$ are \emph{(closed) $\delta_i$-half-disks}. Two vertices $j, k$ are \emph{separated} by a diameter $\delta_i$ if they do not lie in the same closed $\delta_i$-half-disk.
\end{defn}

 \begin{prop} Let $Q$ be the set of quadrilaterals of $\textbf{P}_{2n+2}$, $Q_1$ be the set of quadrilaterals contained in a closed $\delta_i$-half-disk some $i$, and $Q_2$ the subset of $Q\setminus Q_1$ of quadrilaterals whose vertices are not $\{i, j, i', j'\}$. There is a bijection $\alpha:Q_1 \to Q_2$.
 \end{prop}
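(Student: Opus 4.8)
The plan is to construct the bijection $\alpha$ explicitly using the $G$-action on $\textbf{P}_{2n+2}$. Recall that $Q$ consists of quadrilaterals in the \emph{unfolded} polygon $\textbf{P}_{2n+2}$, and what we ultimately want to count is $G$-orbits of such quadrilaterals coming from centrally symmetric triangulations; the asymmetry between $Q_1$ (quadrilaterals lying in a single closed half-disk, hence moved off themselves by the antipodal map, so occurring in pairs) and the remaining quadrilaterals is exactly what $\alpha$ is designed to measure. So the first step is to understand $Q_1$ concretely: a quadrilateral $q \in Q_1$ has all four vertices among the $n+1$ vertices of one closed $\delta_i$-half-disk, and in fact (shrinking $i$) there is a \emph{smallest} diameter $\delta_i$ separating none of its vertices — equivalently $q$ lies in a half-disk but not in the interior of a smaller one. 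I would record this canonical $\delta_i = \delta_i(q)$ and note that $q$ and its antipode $g(q)$ share the same canonical diameter and lie in opposite $\delta_i$-half-disks.

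Next I would describe the target. A quadrilateral $q' \in Q \setminus Q_1$ is one whose four vertices are separated by every diameter $\delta_i$ through a vertex — concretely, its four vertices interleave around the polygon so that some diameter passes through the quadrilateral. The condition defining $Q_2 \subseteq Q \setminus Q_1$ excludes the ``fully antipodal'' quadrilaterals with vertex set $\{i,j,i',j'\}$ (these are exactly the quadrilaterals fixed by $g$, i.e. the centrally symmetric ones, which must be treated separately and contribute the remaining terms of the count). So $Q_2$ is the set of quadrilaterals that are ``crossed by a diameter'' but are not themselves $G$-fixed.

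The map itself: given $q \in Q_1$ with canonical diameter $\delta_i(q) = \delta_i$, the four vertices of $q$ lie in one closed $\delta_i$-half-disk, say the one with vertices $i, i+1, \dots, i'$. Reflect the two vertices of $q$ that are ``farthest from $\delta_i$'' (or, more cleanly, apply $g$ to a distinguished pair of the four vertices) to push them across $\delta_i$ into the opposite half-disk; the resulting four vertices now interleave with $\delta_i$, giving a quadrilateral $\alpha(q) \in Q \setminus Q_1$. I'd check (a) that $\alpha(q)$ is a genuine quadrilateral of $\textbf{P}_{2n+2}$ — i.e. that the resulting arcs, including the correct choice of diagonal, appear together in some triangulation — which amounts to an interleaving/noncrossing check on the reflected vertices; (b) that $\alpha(q)$ is not of the forbidden form $\{i,j,i',j'\}$, which follows because $\delta_i(q)$ was chosen \emph{minimal} (a forbidden quadrilateral would force a smaller separating diameter, contradicting minimality, or force $q$ to be $G$-fixed); and (c) injectivity and surjectivity, by exhibiting the inverse: from $q' \in Q_2$, the diameter $\delta_i$ crossing it is unique given the non-antipodal condition, and reflecting back the pair of vertices of $q'$ on one chosen side of $\delta_i$ returns the unique preimage in $Q_1$. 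The diagonal of the quadrilateral must be tracked throughout — $\alpha$ should send a chosen-diagonal quadrilateral to one with the corresponding chosen diagonal — and this is the kind of bookkeeping that needs care but no cleverness.

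I expect the main obstacle to be pinning down the \emph{canonical} diameter $\delta_i(q)$ for $q \in Q_1$ and verifying that minimality is exactly the condition that excludes vertex sets $\{i,j,i',j'\}$ from the image, together with the matching uniqueness statement on the $Q_2$ side (that a non-antipodal diameter-crossing quadrilateral determines its crossing diameter uniquely). Getting these two ``uniqueness of the relevant diameter'' facts right — and handling the degenerate cases where vertices of $q$ actually lie \emph{on} a diameter — is where the real content of the argument sits; once that is set up, the reflection map and its inverse, and the compatibility with the choice of diagonal, are routine case checks on cyclic order.
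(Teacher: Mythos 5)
There is a genuine gap, and it starts with the map itself. The paper's bijection flips exactly \emph{one} vertex: writing $q=(a,b,c,d)\in Q_1$ in clockwise order starting from the half-disk disjoint from $q$, it sends $(a,b,c,d)\mapsto (a,c,d,b')$. Your map reflects \emph{two} vertices across the canonical diameter, and under the natural reading it fails to land in $Q_2$. Concretely, take $n=4$, so $\textbf{P}_{10}$ with $i'=i+5$, and $q=\{1,2,3,4\}$ contained in the closed $\delta_1$-half-disk $\{1,\dots,6\}$. The two vertices farthest from $\delta_1$ are $3$ and $4$; reflecting them gives $\{1,2,8,9\}$, which lies in the closed $\delta_8$-half-disk $\{8,9,10,1,2,3\}$ and hence is still in $Q_1$. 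This is not an accident: if you reflect the two vertices $c,d$ of $(a,b,c,d)$, the cyclic gap from $b$ to $c'$ has length $c-b+n+1\ge n+2$, so the remaining three gaps sum to at most $n$, forcing all four vertices into a closed half-disk. (Reflecting the \emph{middle} pair $b,c$ does escape $Q_1$, but that is a different map from both your description and the paper's, and you would still owe a bijectivity proof.) A secondary problem: your canonical diameter $\delta_i(q)$ is not well defined --- all $\delta_i$-half-disks have the same size, so there is no ``smallest'' one, and a quadrilateral in $Q_1$ is typically contained in several closed half-disks.

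The deeper missing piece is the inverse. You assert that for $q'\in Q_2$ ``the diameter $\delta_i$ crossing it is unique given the non-antipodal condition,'' but this is false: a quadrilateral not contained in any closed half-disk is separated by many diameters (e.g.\ $\{1,4,7,8\}$ in $\textbf{P}_{10}$ is separated by $\delta_2$, $\delta_3$, $\delta_5$, \dots). The actual content of the paper's proof is precisely here: one shows that flipping a vertex of $q'\in Q_2$ can land in $Q_1$ only if that vertex is \emph{alone} in some closed half-disk determined by a vertex of $q'$, that there are exactly two such vertices $i,j$, and that exactly one of the two flips is consistent with the forward map, the choice being governed by whether $i|j|i'$ is clockwise or counterclockwise (with a separate case when $q'$ contains an antipodal pair). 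None of this orientation analysis appears in your proposal, and without it neither injectivity nor surjectivity follows. Finally, note that $Q$, $Q_1$, $Q_2$ in this proposition are sets of quadrilaterals without a choice of diagonal (the factor of $2$ enters later), so the diagonal bookkeeping you describe is not needed here.
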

  
\begin{proof}

We consider $\textbf{P}_{2n+2}$ as a disk with $2n+2$ vertices on the boundary. Let $i, j, k$ be vertices. We denote by $i|j$ a segment of the boundary with endpoints $i$ and $j$. To specify a particular segment, we give an orientation (e.g. ``clockwise segment $i|j$") or an interior point $k$ (e.g. $i|k|j$). If a number of interior points are given, they are listed respecting the orientation of the segment. If a quadrilateral is in $Q_1$, we list its vertices in clockwise order, starting from any point in a half-disk disjoint from the quadrilateral. Note that in this order, a quadrilateral $(a, b, c, d)\in Q_1$ is contained in a closed $\delta_a$-half-disk and a closed $\delta_d$-half-disk. If the quadrilateral is not in $Q_1$, we list its vertices in clockwise order from an arbitrary starting point.
 
 Let $(a, b, c, d)\in Q_1$. The map $\alpha$ ``flips" $b$ to $b'$, sending $(a, b, c, d)$ to $(a, c, d, b')$. 
 %
The map $\alpha$ is well-defined, as $\{a, c, d, b'\} \neq \{i, i', j, j'\}$ for any $i, j$ and for all $i\in \{a, b, c, d\}$ no closed $\delta_i$-half-disk contains all 4 vertices of $\alpha(a, b, c, d)$. 
 
We now show that $\alpha$ has an inverse, similarly given by flipping a vertex. Let $q=(a, b, c, d) \in Q_2$. Note that flipping a vertex of $q$ results in a quadrilateral in $Q_1$ only if it is the only vertex in some half-disk; one may take this half-disk to be a $\delta_i$-half-disk for some $i \in \{a, b, c, d\}$.

Suppose for some $i \in \{a, b, c, d\}$, $i'$ is also in $\{a, b, c, d\}$. Wlog, $i=a$ and thus $i'=c$ (otherwise $q$ would be contained in a half-disk). Flipping $a$ or $c$ would result in 3 vertices, so they cannot be flipped to produce an element of $Q_1$. Since $Q_2$ does not contain quadrilaterals of the form $(i, j, i', j')$, $b' \neq d$. The clockwise segment $a|c$ necessarily includes $d'$, since for all $i, j$, the segment $i|i'$ includes exactly one of $j$ and $j'$. So this segment is either $a|d'|b|c$ or $a|b|d'|c$; the clockwise segment $c|a$ is $c|d|b'|a$ or $c|b'|d|a$, respectively. In the first case, flipping $d$ gives a preimage of $q$, and flipping $b$ does not; vice-versa in the second case. 

Now, suppose no vertices in $q$ are flips of each other. There are two vertices of $q$ in one $\delta_a$-half-disk and one, say $i$, in the other. Similarly, there are two vertices of $q$ in one $\delta_i$-half disk and one, say $j$, in the other.

We claim that $i$ and $j$ are the only vertices alone in $\delta_k$-half-disks for $k\in \{a, b, c, d\}$. 

If $j=a$, then the segments $a|i|a'$ and $i|a|i'$ contain no other vertices of $q$, implying that $i'|a|i|a'$ is a segment containing no other vertices of $q$. Let $k$ be one of the other vertices of $q$. Either $k$ or $k'$ must lie on $i'|a|i|a'$ between $a$ and $a'$, and between $i$ and $i'$. Thus $k'$ in fact lies between $a$ and $i$ on this segment, so $\delta_k$ separates $a$ and $i$. Since one $\delta_k$-half-disk contains a single vertex, the statement follows.

 If $j \neq a$, then the segments $a|i|a'$ and $i|j|i'$ contain no other vertices of $q$, so $a|i|a'|j|i'$ is a segment containing no other vertices of $q$. Further, $j'$ must lie somewhere on $a|i|a'$ and cannot lie on $i|j|i'$, implying that $a|j'|i|a'|j|i'$ is a segment. Since no other vertices of $q$ appear in this segment, $i$ is the single vertex in one $\delta_j$-half-disk. Let $k$ be the other vertices of $q$. As in the above case, one of $k$ and $k'$ must lie between $a$ and $a'$, and $i$ and $i'$. Thus, $k'$ lies between $i$ and $a'$ in the arc $a|i|a'|j|i'$, implying $\delta_k$ separates $i$ and $j$. The statement follows.
 
 As noted above, flipping $i$ will result in a quadrilateral $q'$ in $Q_1$. The vertex $i'$ will not be the first or last vertex in $q'$, as by assumption the vertices of $q$ are not contained in a $\delta_i$-half-disk. However, $j$ will be, since all vertices of $q'$ are contained in one $\delta_j$-half-disk. The vertex of $q'$ closest to $j$ is $i'$, as if there is a vertex $k$ between $i'$ and $j$, then $j$ is not alone in a $\delta_i$-half-disk. So flipping $i$ gives a preimage of $q$ only if $i|j|i'$ is a clockwise arc. Reversing the roles of $i$ and $j$ in the above argument yields that flipping $j$ gives a preimage of $q$ only if $j|i|j'$ is clockwise (i.e. $i|j|i'$ is counterclockwise).
 \end{proof}
 
 Since $|Q|=|Q_1|+|Q_2|+ |\{q \in Q: q=(i, j, i', j') \text{ for } 1 \leq i<j\leq n+1\}|$, we have the following corollary.
 
 \begin{cor} $|Q_1|=\frac{1}{2}(\binom{2n+2}{4}-\binom{n+1}{2})$
 \end{cor}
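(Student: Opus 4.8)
The plan is to derive the corollary directly from the preceding proposition by a counting argument. We have the disjoint union
\[
Q = Q_1 \sqcup Q_2 \sqcup \{q \in Q : q = (i,j,i',j') \text{ for some } 1 \le i < j \le n+1\},
\]
where the three pieces are disjoint by construction: $Q_1$ consists of quadrilaterals contained in a closed half-disk, $Q_2$ is the subset of $Q \setminus Q_1$ whose vertex set is not of the form $\{i,j,i',j'\}$, and the last piece is exactly the complement of $Q_2$ inside $Q \setminus Q_1$. First I would record that $|Q| = \binom{2n+2}{4}$, since a quadrilateral of $\textbf{P}_{2n+2}$ (a polygon with no puncture) is determined by its four vertices. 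Next I would apply the bijection $\alpha : Q_1 \to Q_2$ of the previous proposition to conclude $|Q_1| = |Q_2|$.

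The remaining step is to count the third set $\{q \in Q : q = (i,j,i',j'),\ 1 \le i < j \le n+1\}$. Here one observes that such a quadrilateral has vertices forming two antipodal pairs $\{i,i'\}$ and $\{j,j'\}$, and choosing it amounts to choosing the two diameters $\delta_i$ and $\delta_j$, i.e.\ choosing an unordered pair from the $n+1$ diameters $\delta_1, \dots, \delta_{n+1}$; the constraint $1 \le i < j \le n+1$ precisely enumerates these $\binom{n+1}{2}$ choices, and each determines the quadrilateral uniquely (its four vertices are fixed, and as a quadrilateral of the polygon it is unique). Hence this set has size $\binom{n+1}{2}$.

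Combining, $\binom{2n+2}{4} = |Q| = 2|Q_1| + \binom{n+1}{2}$, so $|Q_1| = \tfrac12\bigl(\binom{2n+2}{4} - \binom{n+1}{2}\bigr)$, as claimed. The only point requiring a little care — and the step I would flag as the main (mild) obstacle — is checking that a quadrilateral of the form $(i,j,i',j')$ really is uniquely determined by the pair $\{i,j\}$ of indices, i.e.\ that no choice of diagonal is free here. This should follow because such a quadrilateral is an honest quadrilateral in the unpunctured polygon $\textbf{P}_{2n+2}$ (not a configuration with radii), so it is pinned down by its four vertices together with the data already implicit in listing it as an element of $Q$; one should simply confirm that "quadrilateral (with a choice of diagonal)" for $\textbf{P}_{2n+2}$ does not double-count in this degenerate antipodal case, after which the arithmetic is immediate.
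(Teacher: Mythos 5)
Your proof is correct and follows the same route as the paper, which derives the corollary from the identity $|Q|=|Q_1|+|Q_2|+|\{q=(i,j,i',j')\}|$ together with the bijection $\alpha:Q_1\to Q_2$. Your final worry is moot: $Q$ is defined as the set of quadrilaterals of $\textbf{P}_{2n+2}$ \emph{without} a choice of diagonal, and in an unpunctured polygon a quadrilateral is determined by its four vertices, so the antipodal set has size exactly $\binom{n+1}{2}$ as you computed.
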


 Note that the quadrilaterals of centrally symmetric triangulations of $\textbf{P}_{2n+2}$ are either in $Q_1$ or have vertex set $\{i, j, i', j'\}$ for $1\leq i< j \leq n+1$. The latter quadrilaterals are fixed by the action of $G$; the quadrilaterals in $Q_1$ have $G$-orbits of size two. So we have that
 
   \begin{equation} \label{eq:typeCquad}\mf{q}(\textbf{P}_{2n+2}^{~G})=\frac{1}{2}|Q_1|+ \binom{n+1}{2}=\frac{1}{6}n(n+1)(n^2+2).\end{equation}

\subsection{Exceptional types}

Let $Z \in \{E_6, E_7, E_8, F_4, G_2\}$ and let $\mc{S}_{sf}$ be a type $Z$ $\X$-seed pattern over $\QQ_{sf}$ with one (equivalently every) $\X$-cluster consisting of algebraically independent elements. The value of $|\X(\mc{S}_{sf})|$ was computed using a computer algebra system (Mathematica), by generating all possible $\X$-seeds via mutation.\footnote{The code is available at \url{https://math.berkeley.edu/~msb}.}


\section{Proof of Proposition \ref{prop:injection}} \label{sec:geocontruct}

We proceed type by type. The general recipe is as follows. As usual, for non-simply laced types, ``arc" should be read as ``orbit of arc." 

Let $V$ be a vector space and $\textbf{k}$ the field of rational functions on $V$. For $\textbf{P} \in \{\textbf{P}_{n+3},\textbf{P}_{n}^\bullet, \textbf{P}_{n+1}^{\bullet G}, \textbf{P}_{2n+2}^{~ G} \}$, we assign a function $P_\gamma \in \textbf{k}$ to each arc and boundary segment $\gamma$ of $\textbf{P}$. 

Let $T$ be a triangulation of $\textbf{P}$. We define seeds $\Sigma_T=(\textbf{a}, \textbf{x}, B(T))$, where the clusters and exchange matrix are indexed by arcs in $T$. The matrix $B(T)$ is the exchange matrix as defined in Sections 3.1 and 3.2, whose entries are based on $Q(T)$. The $\A$-variable associated to $\gamma \in T$ is just $P_\gamma$. 

To describe the $\X$-variables, we first extend our construction of $Q(T)$ to the boundary segments of $\textbf{P}$ to create the quiver $\overline{Q}(T)$. That is, we place a vertex at the midpoint of every arc and boundary segment, put an arrow from $\gamma$ to $\gamma'$ if $\gamma$ precedes $\gamma'$ moving clockwise around some triangle of $T$, and delete a maximal collection of 2-cycles (making slight modifications for punctured polygons). If there is an arrow from $\gamma$ to $\gamma'$ in $\overline{Q}(T)$, we write $\gamma \to \gamma'$. For simply laced types, the $\X$-variable associated to $\gamma$ is 

\begin{equation} \label{eqn:xvar}
\frac{\prod_{\gamma' \in \partial \textbf{P}} P_{\gamma'}^{\# \{\text{arrows } \gamma' \to \gamma\}}}{\prod_{\gamma' \in \partial \textbf{P}} P_{\gamma'}^{\# \{\text{arrows } \gamma \to \gamma'\}}}
.
\end{equation}

For non-simply laced types, the $\X$-variable associated to $[\gamma]$ is

\[\frac{\prod_{\gamma' \in \partial \textbf{P}} P_{\gamma'}^{\# \{\text{arrows } [\gamma'] \to \gamma\}}}{\prod_{\gamma' \in \partial \textbf{P}} P_{\gamma'}^{\# \{\text{arrows } \gamma \to [\gamma']\}}}
\]

where $\# \{\text{arrows } [\gamma'] \to \gamma \}= \sum_{\tau \in [\gamma']} \# \{\text{arrows } \tau \to \gamma \text{ in } \overline{Q}(T)\}$ and $\# \{\text{arrows } \gamma \to [\gamma']\}=\sum_{\tau \in [\gamma']} \# \{\text{arrows } \gamma \to \tau \text{ in } \overline{Q}(T)\}$.

These seeds form $\mc{R}(Z_n)$, an $\A$-seed pattern of type $Z_n$ over $\QQ\PP$ where $\PP=$Trop$(P_\gamma: \gamma \subseteq \partial \textbf{P})$. The reader familiar with cluster algebras of geometric type should note that we could equally define this seed pattern by declaring $P_\gamma$ a frozen variable for $\gamma$ a boundary arc, defining an extended exchange matrix $\overline{B}(T)$ from the quiver $\overline{Q}(T)$, and then finding the $\X$-variable associated to each arc via the formula \cite[Equation 2.13]{FZ4}. 

In $\widehat{\mc{R}(Z_n)}$, the $\X$-variables of $\hat{\Sigma}_T$ are also indexed by arcs of $T$. To emphasize that the $\X$-variable associated to $\gamma$ in $\hat{\Sigma}_T$ depends only on the quadrilateral $q:=q_T(\gamma)$ of $\gamma$, we denote it by $\hat{x}_{q, \gamma}$. In simply laced types,

\begin{equation}\label{eqn:xhat}
\hat{x}_{q, \gamma}=\frac{\prod_{\gamma' \in T \cup \partial \textbf{P}} P_{\gamma'}^{\# \{\text{arrows } \gamma' \to \gamma\}}}{\prod_{\gamma' \in T \cup \partial \textbf{P}} P_{\gamma'}^{\# \{\text{arrows } \gamma \to \gamma'\}}}
\end{equation}

and in non-simply laced types,

\[\hat{x}_{q, [\gamma]}=\frac{\prod_{\gamma' \in T \cup \partial \textbf{P}} P_{\gamma'}^{\# \{\text{arrows } [\gamma'] \to \gamma\}}}{\prod_{\gamma' \in T \cup \partial \textbf{P}} P_{\gamma'}^{\# \{\text{arrows } \gamma \to [\gamma']\}}}.
\]

To show that $\hat{x}_{q, \gamma}$ and $\hat{x}_{q', \gamma'}$ are distinct for $q \neq q'$, we show that they evaluate differently at specific elements of $V$.

In the subsequent sections, we again label the $m$ vertices of $\textbf{P}$ clockwise with $1, \dots, m$.

\subsection{Type $A_n$}

Let $V=$Mat$_{2, n+3}(\CC)$ be the vector space of $2 \times (n+3)$ complex matrices. Let $\textbf{k}$ be the field of rational functions on $V$, written in terms of the coordinates of the column vectors $v_1, \dots, v_{n+3}$. Let the \emph{Pl\"{u}cker coordinate} $P_{ij} \in \textbf{k}$ be the determinant of the $2 \times 2$ matrix with columns $v_i$ and $v_j$. Pl\"{u}cker coordinates will be the basis of all of the following constructions. They give an embedding of the Grassmannian Gr$_{2, n+3}$ of $2$-planes in $\CC^{n+3}$ into complex projective space of dimension $\binom{n+3}{2}-1$, as the Pl\"{u}cker coordinates of a matrix $z \in V$ (up to simultaneous rescaling) depend only on the rowspan of $z$. The Pl\"{u}cker coordinates also generate the coordinate ring of the affine cone over the Grassmannian Gr$_{2, n+3}$ in the Pl\"{u}cker embedding.

If $\gamma$ is an arc or boundary segment of $\textbf{P}_{n+3}$ between vertices $i$ and $j$ ($i<j$), we define $P_\gamma:=P_{ij}$. In other words, the $\A$-variables of $\mc{R}(A_n)$ are exactly the Pl\"{u}cker coordinates $P_{ij}$ where $i<i+1<j$. The semifield $\PP$ is the tropical semifield generated by all consecutive Pl\"{u}cker coordinates $P_{i, i+1}$. For $\gamma$ an arc with quadrilateral $q$ in some triangulation, the $\X$-variable $x_{q, \gamma}$ records which boundary arcs $\tau$ are in triangles with $\gamma$. By the construction of $\overline{Q}(T)$ and \eqref{eqn:xvar}, if $\tau$ precedes $\gamma$ moving around the triangle clockwise, $P_\tau$ appears in the numerator of $x_{q, \gamma}$. If $\tau$ instead follows $\gamma$, $P_\tau$ appears in the denominator. 

\begin{figure}
\centering
\includegraphics[height=1.5in]{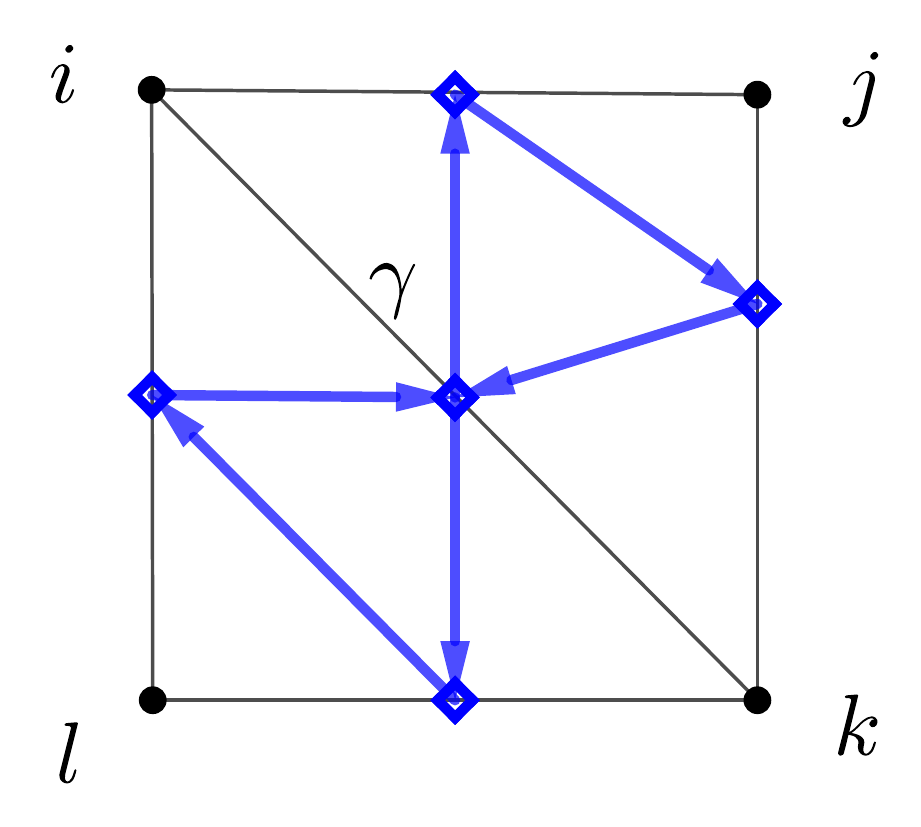}
\caption{\label{fig:Aquad} A quadrilateral and its associated quiver.}
\end{figure}

Let $T$ be a triangulation of $\textbf{P}_{n+3}$ containing an arc $\gamma$ with vertices $i$ and $k$. Suppose $q:=q_T(\gamma)$ has vertices $1\leq i<j<k<l \leq n+3$ (see Figure \ref{fig:Aquad}). Then \eqref{eqn:xhat} becomes

\begin{equation} \label{eqn:typeA} \hat{x}_{q, \gamma}=\frac{\prod\limits_{\tau: \tau \to \gamma}P_\tau}{\prod\limits_{\tau: \gamma \to \tau}P_\tau}=\frac{P_{il}P_{jk}}{P_{ij}P_{kl}}.
\end{equation}

The $\X$-variable associated to the other diagonal of the quadrilateral is $\hat{x}_{q, \gamma}^{-1}$. Clearly, $\hat{x}_{q,\gamma} \neq \hat{x}_{q, \gamma}^{-1}$, so the $\X$-variables associated to the two diagonals of the same quadrilateral are distinct. 

Consider another quadrilateral $q'$ with vertices $1 \leq a<b<c<d \leq n+3$ and diagonal $\gamma$. Choose $s \in \{a, b, c, d\}\setminus \{i, j, k, l\}$. The Pl\"{u}cker coordinate $\pm P_{st}$ appears in $\hat{x}_{q', \gamma}^{\pm 1}$ for some $t \in \{a, b, c, d\}$, Let $z$ be any matrix such that $\{v_i(z), v_j(z), v_k(z), v_l(z)\}=\{(1, 0),(0, 1),(1, 1),(-1, 1)\}$ and $v_s(z)=v_t(z)$. Then $\hat{x}_{q, \gamma}(z)$ is nonzero and $\hat{x}_{q', \gamma}^{\pm 1}(z)$ is either zero or undefined. This completes the proof of Proposition \ref{prop:injection} for type $A_n$.

\subsection{Type $B_n$} 

Let $V=$ Mat$_{2, n+2}(\CC)$ be the vector space of $2 \times (n+2)$ complex matrices. Again, let $\textbf{k}$ be the field of rational functions on $V$, written in terms of the coordinates of the column vectors $v_1, \dots, v_{n+3}$ and $\textbf{P}_{ij} \in \textbf{k}$ be the determinant of the $2 \times 2$ matrix with columns $v_i$ and $v_j$. We define a \emph{modified Pl\"{u}cker coordinate} $P_{i\overline{j}}:=P_{i, n+2}P_{j, n+2}-P_{ij}$.

Let $\gamma$ be an arc or boundary segment of $\textbf{P}_{n+1}^{\bullet}$. If $\gamma$ has endpoints $i, j \in \{1, \dots, n+1\}$, let $P_{[\gamma]}:=P_{ij}$ (respectively $P_{[\gamma]}:=P_{i \overline{j}}$) if it does not (respectively, does) cross the cut. If $\gamma$ is a radius with endpoints $i$ and $p$, then let $P_{[\gamma]}:=P_{i, n+2}$.

Let $q$ be a quadrilateral with vertices $1\leq i<j<k<l \leq n+1$. Then 
\begin{equation} \label{eqn:typeB4vert}
\hat{x}_{q, [\gamma]} \in \left \{ \left (\frac{P_{il}P_{jk}}{P_{ij}P_{kl}} \right)^{\pm 1}, \left (\frac{P_{i\overline{l}}P_{j\overline{k}}}{P_{ij}P_{kl}} \right)^{\pm 1}, \left (\frac{P_{i\overline{l}}P_{jk}}{P_{i\overline{j}}P_{kl}}\right)^{\pm 1}, \left (\frac{P_{i\overline{l}}P_{jk}}{P_{ij}P_{k\overline{l}}}\right)^{\pm 1}\right \}.
\end{equation}

Consider a quadrilateral $q$ with two vertices $i$ and $j$ ($i<j$) and let $\gamma$ be the plain radius with endpoints $i$ and $p$. Then 

\begin{equation} \label{eqn:typeB2vert}
\hat{x}_{q, [\gamma]}=\frac{P_{ij}^2}{P_{i\overline{j}}^2}.
\end{equation}

Finally, given a quadrilateral $q$ with three vertices $i<j< k$ and diagonal $\gamma$,

\begin{equation} \label{eqn:typeB3vert}
 \hat{x}_{q, [\gamma]} \in \left
\{ \left (\frac{P_{ij}P_{k, n+2}^2}{P_{i\overline{k}}P_{jk}} \right) ^{\pm 1}, \left ( \frac{P_{i\overline{k}}P_{j, n+2}^2}{P_{ij}P_{jk}} \right )^{\pm 1}, \left ( \frac{P_{jk}P_{i, n+2}^2}{P_{i\overline{k}}P_{ij}} \right ) ^{\pm 1} \right \}.
\end{equation}

We would like to show that all of these expressions are distinct. Note that in general, expressions from quadrilaterals on different vertices are definitely distinct (as long as none are identically zero on their domains, which follows easily from arguments below). Indeed, if $a$ is a vertex of $q'$ and not $q$, then one can find a matrix $z$ such that $\hat{x}_{q, \gamma}(z) \neq 0$ and this will not depend on $v_a(z)$. However, the index $a$ appears in the expression $\hat{x}_{q', \gamma'}$, and thus one can freely choose $v_a(z)$ in order to make $\hat{x}_{q', \gamma'}(z)$ either zero or undefined. 

The only instance in which this is not clear is when $\displaystyle \hat{x}_{q, \gamma}^{\pm1}= \frac{P_{i\overline{l}}P_{jk}}{P_{i\overline{j}}P_{kl}}$ and $q'$ is a quadrilateral with vertices $j$, $k$, and $l$. Because $i$ appears only in modified Pl\"{u}cker coordinates, it is not immediate that one can choose columns to make the expressions differ. The following remark gives a way to do this.

\begin{rmk}\label{rmk:typeB} $P_{a\overline{b}}=0$ for $v_a=(1, 0), v_b=(0, 1), v_{n+2}=(-1, 1)$, and, if $v_c=(1, 1), v_d=(-1, 1)$, $P_{s, \overline{t}}\neq 0$ for all other pairs $s, t \in \{a, b, c, d\}$ with $s \neq t$.
\end{rmk}

So there exists a matrix $z$ such that $P_{i \overline{k}}(z)=0$ and all other (usual or modified) Pl\"{u}cker coordinates involving the indices $i, j, k, l$ are nonzero, which covers the problematic case.

Remark \ref{rmk:typeB} also gives us that two $\X$-variables associated to different quadrilaterals on the same 4 vertices are distinct, as each expression in \eqref{eqn:typeB4vert} involves a different modified Pl\"{u}cker coordinate.

In the case when the quadrilaterals are on the same 3 vertices, if $\{v_i, v_j, v_k\}=\{(1, 0), (1, 1), (-1, 1)\}$, choosing $v_{n+2}=v_a$ for $a=i, j, k$ makes a unique expression in \eqref{eqn:typeB3vert} zero or undefined.

There are no two quadrilaterals on the same two vertices, and the expressions in \eqref{eqn:typeB2vert} are clearly not identically zero, so no further argument is needed.

\subsection{Type $C_n$} Let $V=$Mat$_{2, n+1}(\CC)$. Let $M \in$SO$_2(\CC)$ be
\[\begin{bmatrix}
0 & -1\\
1 & 0
\end{bmatrix}.
\]

Again, let $\textbf{k}$ be the field of rational functions on $V$, written in terms of the coordinates of the column vectors $v_1, \dots, v_{n+1}$ and $\textbf{P}_{ij} \in \textbf{k}$ be the determinant of the $2 \times 2$ matrix with columns $v_i$ and $v_j$. We define the \emph{modified Pl\"{u}cker coordinate} $P_{i \overline{j}}$ as the determinant of the $2 \times 2$ matrix whose first column is $v_j$ and whose second column is $Mv_i$. More explicitly, if $v_i=(v_{i, 1}, v_{i, 2})$, then $P_{i \overline{j}}=v_{j, 1}v_{i, 1}+v_{j, 2}v_{i, 2}$.

Let $\gamma$ be an arc or boundary segment of $\textbf{P}_{2n+2}$. We define $P_{[\gamma]}:=P_{ij}$ if $[\gamma]$ contains an arc from $i$ to $j$ with $i<j\leq n+1$, i.e. if one element of $[\gamma]$ is contained in a $\delta_1$-half-disk (see
 Definition \ref{defn:halfdisk}). We define $P_{[\gamma]}:=P_{i\overline{j}}$ if $[\gamma]$ contains an arc from $i$ to $j+n+1$ with $i \leq j \leq n+1$, i.e. if one element of $[\gamma]$ is not contained in a $\delta_1$-half-disk.

In a departure from earlier notation, we will use $i$ solely to denote a vertex in $\{1, \dots, n+1\}$; $i':=i+n+1$, as before. We will also include the vertices $1$ (respectively $n+2$)  in the $\delta_1$-half-disk containing $2$ (respectively, $n+3$). Recall that in $\textbf{P}_{2n+2}^G$, $q_T([\gamma])=[q_T(\gamma)]$ for some arc $\gamma$ and triangulation $T$ containing $\gamma$. If $[q_T(\gamma)]$ contains 2 quadrilaterals of $\textbf{P}_{2n+2}$, we will choose as a representative the quadrilateral containing the smallest vertex; if this does not determine one of the quadrilaterals, we will choose the quadrilateral containing the two smallest vertices. Quadrilaterals are given as tuples of their vertices, listed in clockwise order starting with the smallest vertex.

\begin{figure}
\centering
\includegraphics[width=0.8\textwidth]{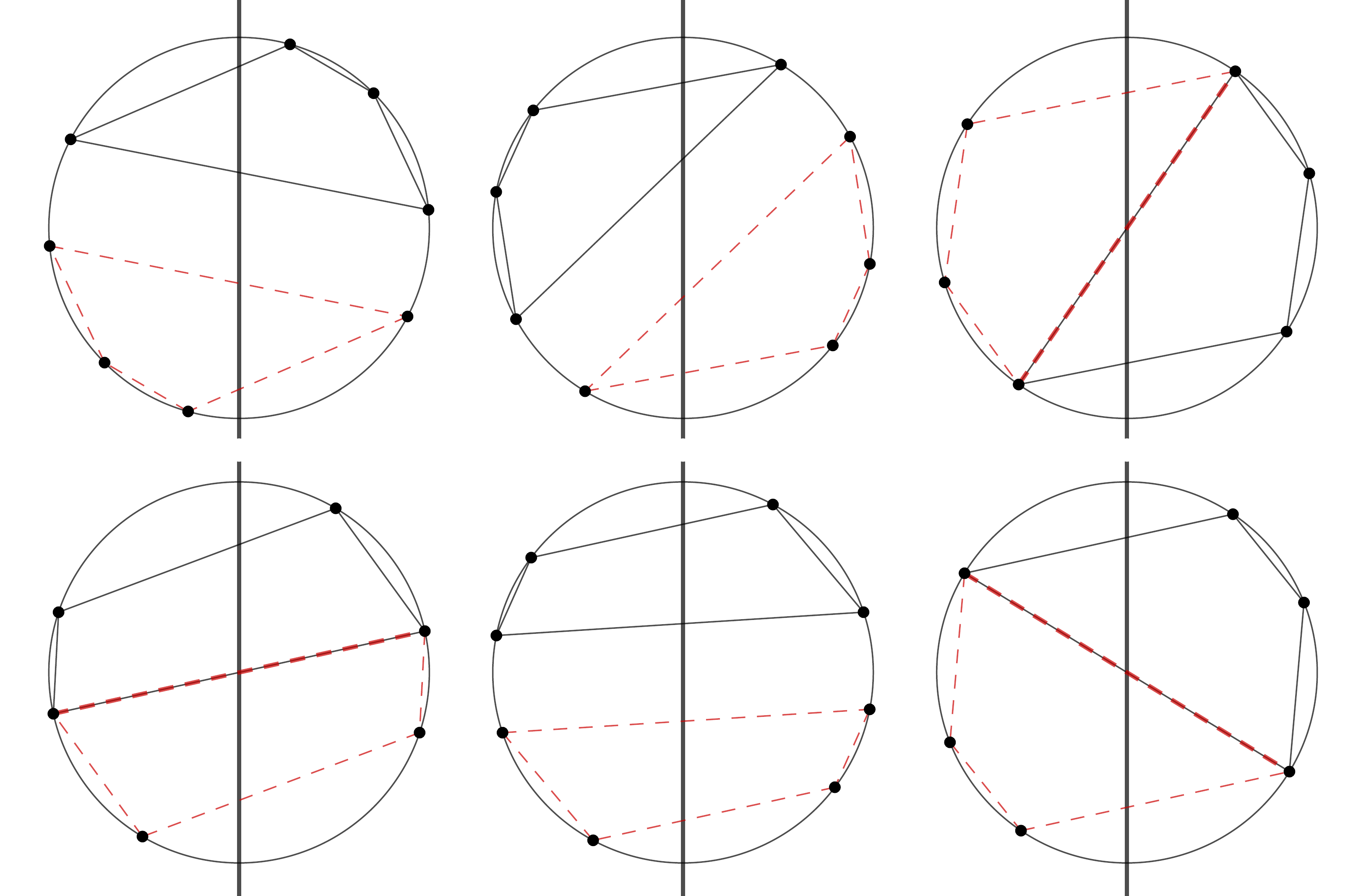}
\caption{\label{fig:typeCquad} The 6 types of quadrilaterals of $\textbf{P}_{2n+2}^{~G}$ that are not contained in $\delta_1$-half-disks but are contained in some closed half-disk. The thick vertical line is $\delta_1$. The distinguished representative of each quadrilateral is solid; the other representative is dashed. }
\end{figure}

If $q=[q_T(\gamma)]$ consists of a single quadrilateral, then $q_T(\gamma)=(i, j ,i', j')$. Then
\begin{equation*}
\hat{x}_{q, [\gamma]} \in \left \{ \left(\frac{P_{ij}^2}{P_{i \overline{j}}^2}\right)^{\pm 1} \right \}.
\end{equation*}

For the remaining cases, $q=[q_T(\gamma)]$ contains 2 quadrilaterals. If $q_T(\gamma)=(i, j, k, l)$ (i.e. all vertices are in one $\delta_1$-half-disk), then

\begin{equation*}
\hat{x}_{q, [\gamma]} \in \left \{ \left(\frac{P_{il}P_{jk}}{P_{ij}P_{kl}}\right)^{\pm 1} \right \}.
\end{equation*}

If $q_T(\gamma)$ has 3 vertices in one $\delta_1$-half-disk, $q_T(\gamma)$ is $(i, j, k, l')$ for $k<l$, $(i, j', k', l')$ for $i<j$, $(i, j, k, i')$, or $(i, j, k, k')$ (see Figure \ref{fig:typeCquad}). Then $\hat{x}_{q, [\gamma]}$ is, respectively

\begin{equation*}
\left (\frac{P_{i\overline{l}}P_{jk}}{P_{ij}P_{k\overline{l}}}\right )^{\pm 1},
 \left ( \frac{P_{i\overline{l}}P_{jk}}{P_{i\overline{j}}P_{kl}}\right )^{\pm 1}, \left (\frac{P_{i\overline{i}}P_{jk}}{P_{ij}P_{i \overline{k}}}\right )^{\pm 1}, \left (\frac{P_{i\overline{k}}P_{jk}}{P_{ij}P_{k\overline{k}}}\right )^{\pm 1}.
\end{equation*}

If $q_T(\gamma)$ has 2 vertices in each $\delta_1$-half-disk, $q_T(\gamma)$ is $(i,j, k', l')$ for $j<k$, or $(i, j, j', k')$ (see Figure \ref{fig:typeCquad}). Then $\hat{x}_{q, [\gamma]}$ is, respectively

\begin{equation*}
\left (\frac{P_{i\overline{l}}P_{j\overline{k}}}{P_{ij}P_{kl}}\right )^{\pm 1} \text{ or }\left (\frac{P_{i\overline{k}}P_{j\overline{j}}}{P_{ij}P_{jk}}\right )^{\pm 1}.
\end{equation*}

As long as none are identically zero, $\X$-variables from quadrilaterals involving different indices are distinct by a similar argument as the type $B_n$ case. Note that $M$ is invertible, so modified Pl\"{u}cker coordinates $P_{i \overline{j}}$ can be made zero by choosing either $c_i=M^{-1}c_j$ or $c_j=Mc_i$.

Consider a matrix with $v_a=(1, 0), v_b=(0, 1), v_c=(1, 1), v_d=(-2, 1)$. The only (usual or modified) Pl\"{ucker} coordinates that are zero are $P_{a\overline{b}}$ and $P_{b\overline{a}}$. Note also that $P_{a \overline{a}}>0$. This shows that no $\X$-variable is identically zero, and also that $\X$-variables involving the 4 same indices, but corresponding to different quadrilaterals, are distinct. 

To see that $\X$-variables containing the same 3 indices $i, j, k$ and corresponding to different quadrilaterals are distinct, note that they each have different values under the specialization $v_i=(1, 1), v_j=(-2, 1), v_k=(0, 1)$.

\subsection{Type $D_n$}

Let $V=$Mat$_{2, n}(\CC)$ and 

\[A:=\begin{bmatrix}
1 & 0\\
-1 & 2
\end{bmatrix}.
\] 

The eigenvalues of $A$ are $\lambda=1$ and $\overline{\lambda}=-1$; $a=(1 , 1)$ and $a^{\bowtie}=(0, -1)$ are eigenvectors for $\lambda$ and $\overline{\lambda}$ respectively. For $i<j$, we define a modified Pl\"{u}cker coordinate $P_{i \overline{j}}$ as the determinant of the $2 \times 2$ matrix with columns $v_j$ and $Av_i$. We also use the shorthand $P_{ia}$ (respectively $P_{i a^{\bowtie}}$) for the determinant of the $2 \times 2$ matrix with columns $v_i$ and $a$ (respectively $a^{\bowtie}$). 

Let $\gamma$ be an arc or boundary segment of $P_n^\bullet$. Then 

\[P_\gamma=
\begin{cases}
P_{ij} & \text{ if } \gamma \text{ has endpoints } $i, j$ \text{ with } i<j \text{ and does not cross the cut.}\\
P_{i\overline{j}} & \text{ if } \gamma \text{ has endpoints } $i, j$ \text{ with } i<j \text{ and crosses the cut.}\\
P_{ia} & \text{ if } \gamma \text{ is a plain radius with endpoints } p \text{ and } i.\\
P_{ia^{\bowtie}} & \text{ if } \gamma \text{ is a notched radius with endpoints } p \text{ and } i.\\
\end{cases}
\]

We make a slight modification to the usual recipe for producing an $\A$-seed pattern from this information. We add two additional vertices to $\overline{Q}(T)$, one labeled with $\lambda$ and the other with $\overline{\lambda}$. Let $T$ be the triangulation of $\textbf{P}_n^\bullet$ consisting only of radii. In $\overline{Q}(T)$, we add an arrow from $\lambda$ to the radius with endpoints $1, p$, and an arrow from the radius with endpoints $n, p$ to $\overline{\lambda}$. This is enough to determine the arrows involving $\lambda$ and $\overline{\lambda}$ for the remaining triangulations (for example, by performing $\X$-seed mutation on $\hat{\Sigma}_T$). 

Let $i$ and $j>i$ be vertices of $\textbf{P}_n^\bullet$, $\gamma$ be the plain radius from $j$ to $p$ and $\gamma'$ be the notched version of the same radius. Let $q$ be the quadrilateral on $i,j$ with $\gamma$ as a diagonal, and $q'$ be the quadrilateral on $i, j$ with $\gamma'$ as a diagonal. Then, by \cite[Proposition 5.4.11]{book}, $\hat{x}_{q, \gamma}$ and $\hat{x}_{q', \gamma'}$ are, respectively, 
\begin{equation} \label{eqn:typeD2vert}
\frac{\overline{\lambda}P_{ij}}{P_{\overline{ij}}}, \frac{\lambda P_{ij}}{P_{\overline{ij}}}.
\end{equation}
Their inverses are the $\X$-variables corresponding to plain and notched diagonals from $i$ to $p$, respectively.

For all other quadrilaterals, it suffices to consider the associated $\X$-variables up to some Laurent monomial in $\lambda, \overline{\lambda}$. Since $A$ is full rank, this ignored Laurent monomial will not impact the $\X$-variables being well-defined or nonzero, and we omit it for the sake of brevity.

If $q$ has vertices $i<j<k<l$, it does not enclose the puncture. Let $\gamma$ be a diagonal of $q$. Then
\begin{equation}\label{eqn:typeD4vert}
\hat{x}_{q, \gamma} \in 
\left \{ \left( \frac{P_{il}P_{jk}}{P_{ij}P_{kl}}\right )^{\pm 1}, \left( \frac{P_{i\overline{l}}P_{j\overline{k}}}{P_{ij}P_{kl}}\right )^{\pm 1}, \left( \frac{P_{i\overline{l}}P_{jk}}{P_{i\overline{j}}P_{kl}}\right )^{\pm 1}, \left( \frac{P_{i\overline{l}}P_{jk}}{P_{ij}P_{k\overline{l}}} \right )^{\pm 1}\right \}.
\end{equation}

If $q$ has vertices $i<j<k$ and has one diagonal $\gamma$ that is a radius, $\hat{x}_{q, \gamma}$ is one of

\begin{equation}	
\left (\frac{P_{ij}P_{ka}}{P_{ia}P_{jk}} \right )^{\pm 1}, \left ( \frac{P_{jk}P_{ia}}{P_{i\overline{k}}P_{ja}} \right )^{\pm 1}, \left (\frac{P_{i\overline{k}}P_{ja}}{P_{ij}P_{ka}}\right )^{\pm 1}
\end{equation}
or one of these expressions with $a^{\bowtie}$ instead of $a$.

If $q$ has vertices $i<j<k$ and both diagonals are arcs between vertices, then

\begin{equation}
\hat{x}_{q, \gamma} \in \left \{ \left(
\frac{P_{ij}P_{ka}P_{ka^{\bowtie}}}{P_{i\overline{k}}P_{jk}}\right )^{\pm 1}, \left ( \frac{P_{i\overline{k}}P_{ja}P_{ja^{\bowtie}}}{P_{ij}P_{jk}}\right )^{\pm 1}, \left (\frac{P_{jk}P_{ia}P_{ia^{\bowtie}}}{P_{i\overline{k}}P_{ij}}\right )^{\pm 1} \right \}.
\end{equation}

As with the other types, $\X$-variables of quadrilaterals involving different vertices are distinct, as long as none are identically zero. The matrix $A$ is of course invertible, so modified Pl\"{u}cker coordinates can be made zero regardless of which index is constrained. The rest of the argument is the same as with the other types. 

To see that $\X$-variables from quadrilaterals on the same 4 vertices are distinct, notice that if $v_a=(1, 0)$, $v_b=(-1, 1)$, and $\{v_c, v_d\}=\{(0, 1),(1, 1)\}$, then the only (usual or modified) Pl\"{u}cker coordinate that is zero is $P_{a\overline{b}}$. Since each expression in \eqref{eqn:typeD4vert} contains a unique modified Pl\"{u}cker coordinate, the result follows. (It is also clear that no expression is identically zero.)

For $\X$-variables from quadrilaterals on the same 3 vertices, it suffices to consider $\{v_i, v_j, v_k\}=\{(1, 0), (0, 1), (1, 1)\}$. Then $P_{i\overline{k}}\neq 0$. By making different columns equal to $a$ or $a^{\bowtie}$, one can differentiate between any 2 expressions. (Also, under the specialization $v_i=(0, 1)$, $v_j=(-1, 1)$, $v_k=(2, 1)$, all expressions are nonzero.)

For $\X$-variables from quadrilaterals on the same 2 vertices, the presence of $\lambda$ and $\overline{\lambda}$ serve to distinguish between them. For example, under the specialization $v_i=(1, 1)$, $v_j=(1, 0)$, the $\X$-variables in \eqref{eqn:typeD2vert} have different (nonzero) values. 

\section{Corollaries and Conjectures}

We make a few remarks regarding the implications of these results to the larger theory of $\X$-seed patterns and cluster algebras, and conjectural extensions.

First, we conjecture that Theorem \ref{quadbijection} extends to seed patterns from arbitrary marked surfaces.

 \begin{conj} Let $\mc{S}$ be an $\X$-seed pattern from a marked surface $(S, M)$. Then the map from $\{q_T(\gamma) \cup \{\gamma\}| ~ T \text{ a triangulation of } (S, M), \gamma \in T\}$ to $\X(\mc{S}_{sf})$, which sends $q \cup \{\gamma\}$ to $x_{q, \gamma}$, is a bijection.
 \end{conj}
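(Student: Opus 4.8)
The plan is to reproduce the strategy of Section \ref{sec:geocontruct} using a geometric model that exists for \emph{every} marked surface, namely decorated Teichm\"uller theory and Penner's lambda lengths (in the algebraic form developed by Fomin--Thurston). Concretely, for a marked surface $(S,M)$ one realizes an $\A$-seed pattern $\mc{R}$ in the mutation class of $(S,M)$ whose $\A$-variable attached to a tagged arc $\gamma$ is its lambda length $\lambda_\gamma$, and whose coefficient semifield is $\PP = \mathrm{Trop}(\lambda_\beta : \beta \subseteq \partial S)$, the tropical semifield on the lambda lengths of boundary segments; the exchange matrix of a triangulation $T$ is read off from $\overline{Q}(T)$ exactly as in Section \ref{sec:geocontruct}. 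As in the finite-type argument, it suffices to prove that $f \colon Q' \to \X(\widehat{\mc{R}})$ is injective: the map $f \colon Q' \to \X(\mc{S}_{sf})$ is surjective by Proposition \ref{surjection}, and the specialization map $\X(\mc{S}_{sf}) \twoheadrightarrow \X(\widehat{\mc{R}})$ commutes with $f$, so injectivity downstairs forces injectivity upstairs.

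The second step is to compute $\hat{x}_{q,\gamma}$ from the formula $\hat{x}_j = x_j \prod_i a_i^{b_{ij}}$. Since the diagonal $\gamma$ is the only arc of $q \cup \{\gamma\}$ that gets flipped, $\hat{x}_{q,\gamma}$ is a Laurent monomial in the lambda lengths of the \emph{sides} of $q$ alone (never the diagonals), with exponents $\pm 1$ determined by the orientations in $\overline{Q}(T)$: for an honest quadrilateral with cyclically ordered sides $s_1,s_2,s_3,s_4$ and $\gamma$ joining $s_4 \cap s_1$ to $s_2 \cap s_3$ one gets $\hat{x}_{q,\gamma} = \lambda_{s_1}\lambda_{s_3}/(\lambda_{s_2}\lambda_{s_4})$ up to reciprocal, just as in \eqref{eqn:typeA} and \eqref{eqn:typeD4vert}. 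For the degenerate quadrilaterals --- those meeting self-folded triangles, once-punctured monogons, or the ``quadrilateral'' of a radius inside a once-punctured digon --- one uses the tagged-arc lambda-length conventions of Fomin--Thurston, which introduce the two conjugate lambda lengths at a puncture; these play precisely the role of $\lambda,\overline\lambda$ in type $D_n$ and of the modified Pl\"{u}cker coordinates $P_{i\overline{j}}$ in types $B_n,C_n,D_n$, and one records, case by case as in \eqref{eqn:typeD2vert}--\eqref{eqn:typeD4vert}, which side-lambda-lengths (and conjugate lambda lengths) occur.

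The third step is injectivity. Fix $(q,\gamma) \neq (q',\gamma')$. \textbf{Case A:} some arc or boundary segment is a side of $q$ but not of $q'$. Extend $q \cup \{\gamma\}$ to a triangulation $T$; the lambda lengths of the tagged arcs of $T$ together with the boundary lambda lengths are global coordinates on the decorated Teichm\"uller space of $(S,M)$, so $\hat{x}_{q,\gamma}$ is a nontrivial Laurent monomial in \emph{independent} coordinates. Now choose a decorated hyperbolic structure at which the distinguished side's lambda length tends to $0$ or $\infty$ while every lambda length occurring in $\hat{x}_{q',\gamma'}$ stays finite and nonzero (possible since the latter are among the chosen coordinates, or, via Ptolemy relations, generic nonvanishing functions of them); then $\hat{x}_{q,\gamma}$ is $0$ or $\infty$ while $\hat{x}_{q',\gamma'}$ is a finite nonzero scalar. \textbf{Case B:} $q$ and $q'$ have the same set of sides, hence differ only in the choice of diagonal or in tagging at a puncture. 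If $\gamma,\gamma'$ are the two diagonals of one honest quadrilateral then $\hat{x}_{q',\gamma'} = \hat{x}_{q,\gamma}^{-1}$, which differs from $\hat{x}_{q,\gamma}$ because the latter is a nontrivial monomial in algebraically independent lambda lengths and so is not $\pm 1$; if they differ by tagging then, as in type $D_n$, the two expressions differ by a factor of a conjugate lambda length over its partner, and these are distinct.

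The main obstacle I anticipate is the bookkeeping for the degenerate configurations: several shapes of quadrilateral can appear near self-folded triangles, once-punctured monogons and once-punctured digons (compare Figures \ref{fig:quadex} and \ref{fig:typeCquad}), the ``quadrilateral'' of a radius in a once-punctured digon is not itself part of any triangulation, and one must check uniformly --- not merely type by type --- that the conjugate-lambda-length formulas are correct and genuinely separate all tagged variants. A secondary, more routine obstacle is ensuring in Case A that the chosen degeneration can always be arranged without accidentally sending a lambda length appearing in $\hat{x}_{q',\gamma'}$ to $0$ or $\infty$; an explicit family of decorated surfaces built by gluing ideal polygons (of which the genus-zero $\mathrm{SL}_2$/$\mathrm{SO}_2$ models of Section \ref{sec:geocontruct} are special cases) should suffice here. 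Case A itself I expect to be essentially immediate once the coordinate interpretation of lambda lengths is in place, so the degenerate and tagged analysis in Case B is where the real work lies.
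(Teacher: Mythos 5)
You should be aware at the outset that the statement you are proving is presented in the paper as a \emph{conjecture}: the paper establishes the bijection only for the finite-type surfaces (the polygon, the once-punctured polygon, and their $\ZZ/2\ZZ$-foldings), by exhibiting explicit $\mathrm{SL}_2$- and $\mathrm{SO}_2$-invariant models in Section \ref{sec:geocontruct}, and offers no argument for a general marked surface. So there is no proof in the paper to compare yours against; your proposal has to stand on its own, and as written it is a plausible strategy outline with genuine gaps rather than a proof. The overall architecture is sound and parallels the paper's: surjectivity is already known in this generality (the discussion after Proposition \ref{surjection}), the reduction of injectivity for $\mc{S}_{sf}$ to injectivity for a single realization $\widehat{\mc{R}}$ via specialization is correct, and lambda lengths are the natural general-surface substitute for Pl\"ucker coordinates.

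The serious gap is your Case A. When $q$ and $q'$ do not fit into a common triangulation, you complete $q\cup\{\gamma\}$ to a triangulation $T$ and must compare the genuine Laurent monomial $\hat{x}_{q,\gamma}$ in the $T$-coordinates with $\hat{x}_{q',\gamma'}$, whose side lambda lengths are merely positive Laurent polynomials in those coordinates (via iterated Ptolemy relations). Your proposed degeneration, sending one coordinate $\lambda_s$ to $0$ or $\infty$ while "every lambda length occurring in $\hat{x}_{q',\gamma'}$ stays finite and nonzero," is exactly what must be proved: those Laurent polynomials generically \emph{do} involve $\lambda_s$, their limits can be $0$ or $\infty$, and the ratio defining $\hat{x}_{q',\gamma'}$ can degenerate in either direction. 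You call this a "secondary, more routine obstacle," but it is the entire content of the injectivity claim and the reason the statement is left as a conjecture; in the finite-type proofs this is precisely where the explicit coordinate models earn their keep, by letting one write down a concrete point separating any two given expressions. A second, related gap is Case B together with the degenerate configurations: on a general surface the quadrilateral of an arc can be immersed rather than embedded and its sides can repeat or coincide (already in the once-punctured torus every $b_{ij}=\pm 2$ and $\hat{x}_{q,\gamma}$ is a square of a ratio of two lambda lengths), so "same set of sides" no longer pins down the quadrilateral up to diagonal and tagging, and your argument that a nontrivial monomial in algebraically independent lambda lengths cannot equal its own inverse requires the sides to be distinct arcs of a common triangulation. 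None of this shows the approach is wrong, but the injectivity step is not established by what you have written.
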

 
  As remarked upon previously, a surjection from quadrilaterals (with choice of diagonal) to $\X$-variables holds by results of \cite{FST}.

Theorem \ref{quadbijection} implies that, for $\mc{S}_{sf}$ of classical type, the $\X$-variables of an $\X$-seed determine the $\X$-seed. Indeed, the $\X$-variables give the quadrilaterals of a tagged triangulation and a tagged triangulations is uniquely determined by its set of quadrilaterals. Thus each $\X$-seed corresponds to a unique triangulation, and we have the following corollary.

\begin{cor}
The exchange graph of the $\X$-seed pattern $\mc{S}_{sf}$ in classical types coincides with the exchange graph of any $\A$-seed pattern of the same type.
\end{cor}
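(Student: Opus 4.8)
The plan is to deduce this corollary directly from Theorem~\ref{quadbijection} together with the fact that a tagged triangulation is determined by its multiset of quadrilaterals. First I would recall the setup: by the discussion at the start of Section~4, every tagged triangulation $T$ of $\textbf{P}$ gives an $\X$-seed $\Sigma_T$ of $\mc{S}_{sf}$, and the exchange graph of any $\A$-seed pattern of type $Z_n$ is isomorphic to the flip graph of $\textbf{P}$ (Theorems~\ref{ADtriangulation} and \ref{BCtriangulation}). Since passing to $\mc{S}_{sf}$ only identifies vertices of the flip graph, it suffices to show that the assignment $T \maps[\Sigma_T]$ is injective on tagged triangulations; then the flip graph, the exchange graph of the $\A$-pattern, and the exchange graph of $\mc{S}_{sf}$ all coincide.

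The key step is injectivity of $T \mapsto [\Sigma_T]$. Given an $\X$-seed $[\Sigma_T]$, I would recover $T$ as follows. The $\X$-variables appearing in $\Sigma_T$ are exactly $\{x_{T,\gamma} : \gamma \in T\}$, and by Proposition~\ref{surjection} together with Theorem~\ref{quadbijection}, each $x_{T,\gamma}$ equals $f(q_T(\gamma)\cup\{\gamma\})$ for a \emph{unique} quadrilateral-with-diagonal. Hence the (unordered) set of $\X$-variables of $\Sigma_T$ determines the set $\{q_T(\gamma)\cup\{\gamma\} : \gamma \in T\}$ of quadrilaterals of $T$ equipped with their diagonals; in particular it determines the set $\{\gamma : \gamma \in T\}$ of arcs of $T$, since each $\gamma$ occurs as the chosen diagonal of its quadrilateral. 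Therefore $T$ itself is recovered. (One should note the minor subtlety that an $\X$-seed is an equivalence class under relabeling, so one works with the unordered set of $\X$-variables; this is exactly the data Theorem~\ref{quadbijection} controls.) This shows $T \mapsto [\Sigma_T]$ is injective, and since it is evidently surjective onto the $\X$-seeds of $\mc{S}_{sf}$ and intertwines flips with mutations, it is an isomorphism of graphs between the flip graph of $\textbf{P}$ and the exchange graph of $\mc{S}_{sf}$. Combined with the known isomorphism between the flip graph of $\textbf{P}$ and the exchange graph of any type $Z_n$ $\A$-seed pattern, the corollary follows.

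The main obstacle is the step where I claim that the mere \emph{set} of $\X$-variables of $\Sigma_T$ recovers the set of arcs of $T$: a priori one only knows each $\X$-variable comes from \emph{some} quadrilateral-with-diagonal, but a given unordered collection of $\X$-variables must be shown to be realizable by only one triangulation's worth of quadrilaterals. This is handled by the injectivity of $f$ in Theorem~\ref{quadbijection}: since distinct quadrilaterals-with-diagonals give distinct $\X$-variables, the collection of quadrilaterals-with-diagonals of $T$ is literally encoded in the collection of $\X$-variables, and the fact (used in \cite{FST}) that a tagged triangulation is determined by its set of quadrilaterals closes the gap. I would state this last fact explicitly — that no two distinct tagged triangulations of $\textbf{P}$ have the same multiset of quadrilaterals-with-diagonals — as the one external input, noting it is immediate in the polygon case and follows from the local structure of flips in the punctured and folded cases.
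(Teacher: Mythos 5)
Your proposal is correct and follows essentially the same route as the paper: use Theorem~\ref{quadbijection} to conclude that the set of $\X$-variables of an $\X$-seed determines the set of quadrilaterals-with-diagonals, hence the triangulation, so that $T\mapsto[\Sigma_T]$ is injective and the exchange graph of $\mc{S}_{sf}$ coincides with the flip graph. Your observation that $T$ is recovered directly as the set of chosen diagonals actually makes the external fact you flag at the end (that a tagged triangulation is determined by its quadrilaterals, which is what the paper invokes) unnecessary, so that last ``external input'' is redundant rather than a gap.
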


Recall that the diagonals of a quadrilateral can be uniquely associated to a pair of $\A$-variables. These pairs are precisely those variables that appear together on the left hand side of an exchange relation; such $\A$-variables are called \emph{exchangeable}. Clearly, there is a bijection from ordered pairs of exchangeable $\A$-variables to quadrilaterals with a choice of diagonal. Composing this bijection with the bijection of Theorem \ref{quadbijection} gives Corollary \ref{expairbijection} for classical types, which we give again here for the reader's convenience. It was checked by computer for exceptional types.

\begin{manualcor}{1.2} Let $\mc{R}$ be a finite type $\A$-seed pattern. There is a bijection between ordered pairs of exchangeable $\A$-variables in $\mc{R}$ and $\X(\mc{S}_{sf})$.
\end{manualcor}

Let $\mc{R}$ is a finite type $\A$-seed pattern over the tropical semifield with one (equivalently, every \cite[Lemma 1.2]{GSV}) extended exchange matrix of full rank. As is remarked in \cite[Section 7]{FZ4}, in this case the $\X$-variables in the corresponding seed of $\hat{\mc{R}}$ are algebraically independent. Thus, the number of $\X$-variables in $\hat{\mc{R}}$ is $|\X(\mc{S}_{sf})|$. (Note that without this condition, the number of $\X$-variables in $\hat{\mc{R}}$ could be smaller, as Example \ref{ex:degenerateseed} shows.) Recall that in this setting, the $\X$-variables of $\hat{\mc{R}}$ exactly record the two terms on the right hand side of an exchange relation. In the bijection of Corollary \ref{expairbijection}, the pairs of exchangeable $\A$-variables are mapped to the $\X$-variables recording the exchange relation that the $\A$-variables satisfy. This implies the following corollary.

 \begin{cor}\label{uniqueexchange} Let $\mc{R}$ be an $\A$-seed pattern of classical type over the tropical semifield such that one (equivalently, every) extended exchange matrix is full rank. Then the two monomials on the right hand side of an exchange relation \eqref{exchangerel} uniquely determine the variables being exchanged.
 \end{cor}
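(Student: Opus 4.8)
The plan is to show that Corollary \ref{uniqueexchange} follows by unwinding Corollary \ref{expairbijection} in the special case where $\PP = \mathrm{Trop}(t_1, \dots, t_k)$ and one (hence every, by \cite[Lemma 1.2]{GSV}) extended exchange matrix is of full rank. First I would recall the key fact (from \cite[Section 7]{FZ4}) that under the full-rank hypothesis, the $\X$-variables of the associated seed pattern $\hat{\mc{R}}$ are algebraically independent over $\QQ$. This puts us precisely in the situation of Theorem \ref{quadbijection}: $\hat{\mc{R}}$ is an $\X$-seed pattern of classical type with one $\X$-cluster consisting of algebraically independent elements, so $\hat{\mc{R}}$ has the same number of $\X$-variables as $\mc{S}_{sf}$, and the bijection of Corollary \ref{expairbijection} identifies $\X(\hat{\mc{R}})$ with the set of ordered pairs of exchangeable $\A$-variables of $\mc{R}$.

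Next I would make explicit how this bijection is realized. By Proposition \cite[Proposition 3.9]{FZ4} (reproduced in the excerpt), the $\X$-variable $\hat{x}_j = x_j \prod_i a_i^{b_{ij}}$ of $\hat{\mc{R}}$ attached to direction $j$ in a seed $\Sigma_t = (\textbf{a}, \textbf{x}, B)$ is, up to multiplication by the element $x_j \in \PP$ of the tropical semifield, exactly the ratio $\left(\prod_{b_{ij}>0} a_i^{b_{ij}}\right) \big/ \left(\prod_{b_{ij}<0} a_i^{-b_{ij}}\right)$ of the two monomials $M_+ := \prod_{b_{ij}>0} a_i^{b_{ij}}$ and $M_- := \prod_{b_{ij}<0} a_i^{-b_{ij}}$ appearing on the right-hand side of the exchange relation \eqref{exchangerel} satisfied by the exchangeable pair $(a_j, a_j')$. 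Since $\PP$ is tropical (so its elements are monomials in the $t_i$, algebraically independent from the $\A$-variables), knowing $\hat{x}_j$ is equivalent to knowing the pair $(M_+, M_-)$: one recovers $M_+$ and $M_-$ as the numerator and denominator of $\hat{x}_j$ after clearing the (uniquely determined) tropical coefficient. Conversely, the ordered pair $(M_+, M_-)$ determines $\hat{x}_j$ up to this tropical factor.

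Then I would conclude: Corollary \ref{expairbijection} tells us the assignment $(a_j, a_j') \mapsto \hat{x}_j$ is injective on ordered pairs of exchangeable $\A$-variables (it is in fact a bijection onto $\X(\mc{S}_{sf}) = \X(\hat{\mc{R}})$). Composing with the equivalence just described, the ordered pair $(M_+, M_-)$ of monomials on the right-hand side of \eqref{exchangerel} determines the ordered exchangeable pair $(a_j, a_j')$, and hence the unordered pair $\{a_j, a_j'\}$ of variables being exchanged. Since every exchange relation in a classical-type $\A$-seed pattern over any semifield has right-hand side of this shape (the two monomials being unchanged when one varies the semifield, as only the coefficient changes), this proves the statement.

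The main obstacle is the bookkeeping that connects Corollary \ref{expairbijection}, which is phrased in terms of $\X(\mc{S}_{sf})$ abstractly, to the concrete $\X$-seed pattern $\hat{\mc{R}}$ and its exchange-relation interpretation: one must verify that under the full-rank hypothesis $\hat{\mc{R}}$ genuinely has $|\X(\mc{S}_{sf})|$ many $\X$-variables (so that no collisions occur), and that the bijection of Corollary \ref{expairbijection} sends an exchangeable pair to the $\X$-variable recording precisely \emph{that} pair's exchange relation rather than some reindexed one. Both points are addressed in the paragraph of the excerpt preceding Corollary \ref{uniqueexchange}; the remaining work is just to note that passing from $\hat{x}_j$ to the ordered pair of monomials $(M_+, M_-)$ and back is a bijection because the tropical coefficient $x_j$ is recoverable, which is immediate from the algebraic independence of $\{a_i\}$ from $\PP$.
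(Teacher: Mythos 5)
Your overall route is the paper's: pass to $\hat{\mc{R}}$, use the full-rank hypothesis to get algebraic independence of the $\X$-variables and hence $|\X(\hat{\mc{R}})| = |\X(\mc{S}_{sf})|$, and invoke the bijection of Corollary \ref{expairbijection}. But there is a genuine gap in the middle step, caused by your definition of ``the two monomials.'' You set $M_+ = \prod_{b_{ik}>0} a_i^{b_{ik}}$ and $M_- = \prod_{b_{ik}<0} a_i^{-b_{ik}}$, i.e.\ you strip off the coefficient, and you correctly note that $(M_+, M_-)$ then determines $\hat{x}_k = x_k M_+/M_-$ only \emph{up to a tropical factor}. Injectivity of the map sending an exchangeable pair to $\hat{x}_k$ does not let you conclude from this: it rules out two exchangeable pairs having \emph{equal} $\X$-variables, but says nothing about two distinct $\X$-variables differing by exactly a tropical monomial, which is precisely the situation in which two exchange relations would share the pair $(M_+, M_-)$. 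You never close this loophole, and your closing remark that ``passing from $\hat{x}_j$ to $(M_+,M_-)$ and back is a bijection'' asserts exactly the point at issue rather than proving it.

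In fact the loophole cannot be closed under your reading: the cluster-variable parts alone do \emph{not} determine the exchange. In the hexagon realization of type $A_3$ (the paper's $\mc{R}(A_3)$, where boundary segments are frozen and the extended exchange matrix has full rank), the relations $a_{13}a_{24} = a_{12}a_{34} + a_{23}a_{14}$ and $a_{15}a_{46} = a_{14}a_{56} + a_{45}a_{16}$ both reduce to the monomial pair $\{1, a_{14}\}$ once the frozen variables $a_{12}, a_{23}, a_{34}, a_{45}, a_{56}, a_{16}$ are discarded. The corollary concerns the two monomials of \eqref{exchangerel} as written, namely $\frac{x_k}{x_k\oplus 1}\prod_{b_{ik}>0} a_i^{b_{ik}}$ and $\frac{1}{x_k\oplus 1}\prod_{b_{ik}<0} a_i^{-b_{ik}}$; over a tropical semifield these are honest monomials in the frozen and cluster variables together, and their ratio is \emph{exactly} $\hat{x}_k$, with no residual ambiguity. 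With that reading your argument closes immediately and coincides with the paper's: the two monomials determine $\hat{x}_k$, and the bijection of Corollary \ref{expairbijection} applied to $\hat{\mc{R}}$ recovers the exchangeable pair.
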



Lastly, in the original development of finite type $\A$-seed patterns, seed patterns were connected to root systems of the same type. In particular, there is a bijection between $\A$-variables and almost positive roots (positive roots and negative simple roots). Two variables are exchangeable if and only if the corresponding roots $\alpha, \beta$ have $(\alpha|| \beta)=(\beta|| \alpha)=1$, where $(-||-)$ is the \emph{compatibility degree} \cite{FZ2}. This, combined with Corollary \ref{expairbijection}, give a root theoretic interpretation of the number of $\X$-coordinates of $\mc{S}_{sf}$.

\begin{cor} \label{cor:rootinterp}For $\mc{S}_{sf}$ of Dynkin type, $|\X(\mc{S}_{sf})|$ is the number of pairs of almost-positive roots $(\alpha, \beta)$ such that $(\alpha|| \beta)=(\beta|| \alpha)=1$ in the root system of the same type.
\end{cor}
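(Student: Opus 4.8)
The plan is to combine Corollary \ref{expairbijection} with the classical dictionary between finite type $\A$-seed patterns and root systems established in \cite{FZ2}. First I would invoke Corollary \ref{expairbijection}: for any finite type $\A$-seed pattern $\mc{R}$ of Dynkin type $Z_n$, there is a bijection between ordered pairs of exchangeable $\A$-variables in $\mc{R}$ and $\X(\mc{S}_{sf})$ of the same type. In particular, $|\X(\mc{S}_{sf})|$ equals the number of ordered pairs $(a, b)$ of exchangeable $\A$-variables in $\mc{R}$.

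Next I would recall the parametrization of $\A$-variables by almost positive roots. By \cite[Theorem 1.9]{FZ2}, there is a bijection $\alpha \mapsto a[\alpha]$ between the set $\Phi_{\geq -1}$ of almost positive roots of the root system of type $Z_n$ and the $\A$-variables of $\mc{R}$ (this is cluster variables in their language, and the bijection does not depend on the choice of seed pattern with exchange matrix of type $Z_n$). Moreover, \cite[Theorem 1.13]{FZ2} characterizes exchangeability: two distinct $\A$-variables $a[\alpha]$ and $a[\beta]$ are exchangeable — that is, they appear together on the left hand side of an exchange relation — if and only if $(\alpha || \beta) = (\beta || \alpha) = 1$, where $(- || -)$ denotes the compatibility degree. (One should note here that the compatibility degree is symmetric in the cases $(\alpha || \beta) \in \{0, 1\}$, so the condition $(\alpha || \beta) = (\beta || \alpha) = 1$ is really a condition on the unordered pair; the ordered count then picks up a factor corresponding to the two orderings, but since we are counting ordered pairs of exchangeable $\A$-variables and ordered pairs $(\alpha, \beta)$ of almost positive roots with the stated property, the factors match.)

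Putting these together: transporting the bijection $\alpha \mapsto a[\alpha]$ to ordered pairs gives a bijection between ordered pairs $(\alpha, \beta)$ of almost positive roots with $(\alpha || \beta) = (\beta || \alpha) = 1$ and ordered pairs of exchangeable $\A$-variables in $\mc{R}$. Composing with the bijection of Corollary \ref{expairbijection} yields a bijection between such ordered pairs of roots and $\X(\mc{S}_{sf})$, hence the equality of cardinalities asserted in the corollary. This is essentially a chain of three bijections, so the write-up is short.

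I do not expect a genuine obstacle here, since all the input results are already established: Corollary \ref{expairbijection} is proved (in classical types via Theorem \ref{quadbijection}, and by computer in exceptional types), and the root-system dictionary is quoted directly from \cite{FZ2}. The only point requiring mild care is the bookkeeping around ``ordered'' versus ``unordered'' pairs and making sure the compatibility-degree condition is stated in the form actually proved in \cite{FZ2} — but this is routine and is exactly the phrasing already used in the paragraph preceding the corollary statement. So the ``hard part'', such as it is, is purely expository: citing the correct theorem numbers from \cite{FZ2} and noting that the parametrization of $\A$-variables by $\Phi_{\geq -1}$ is independent of the particular seed pattern, so that the count is well-defined.
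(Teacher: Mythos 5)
Your argument is correct and is exactly the paper's: the corollary is obtained by composing Corollary \ref{expairbijection} with the bijection between $\A$-variables and almost positive roots and the characterization of exchangeability via compatibility degree from \cite{FZ2}. The extra bookkeeping you note about ordered versus unordered pairs is harmless and consistent with the paper's (very brief) justification.
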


\section{Acknowledgments} The author would like to thank Lauren Williams for helpful conversations and her comments on drafts of this paper, and Dan Parker and Adam Scherlis for making their undergraduate theses available. The author would also like to thank Nathan Reading for computing the number of pairs of exchangeable $\A$-variables for seed patterns of exceptional types, thereby extending Corollaries \ref{expairbijection} and \ref{cor:rootinterp} to exceptional types. This research did not receive any specific grant from funding agencies in the public, commercial, or not-for-profit sectors.

An extended abstract outlining these results was accepted for presentation at FPSAC 2018.

\printbibliography

\end{document}